\theoremstyle{definition}
\newtheorem{definition}{Definition}[section]
\newtheorem{observation}[definition]{Observation}
\newtheorem{question}[definition]{Question}
\newtheorem{notation}[definition]{Notation}
\newtheorem{claim}[definition]{Claim}
\newtheorem{remark}[definition]{Remark}
\theoremstyle{plain}
\newtheorem{theorem}[definition]{Theorem}
\newtheorem{proposition}[definition]{Proposition}
\newtheorem{lemma}[definition]{Lemma}
\newtheorem{corollary}[definition]{Corollary}
\newcommand{\crit}{\mathop{\mathrm{crit}}}
\newcommand{\fr}{{}^\frown}
\newcommand{\la}{\langle}
\newcommand{\ra}{\rangle}
\newcommand{\uhr}{\restriction}
\newcommand{\ol}{\ol}
\renewcommand{\ol}{\bar}
\newcommand{\MS}{\mathcal{MS}}
\DeclareMathOperator{\dom}{dom}
\title{Compactness and Guessing Principles in the Radin Extensions}
\author{Omer Ben-Neria\footnote{The first author was partially supported by the Israel Science Foundation (Grant 1832/19).} \\  \href{omer.bn@mail.huji.ac.il}{omer.bn@mail.huji.ac.il} 
   \and Jing Zhang \footnote{
The second author was supported by the Foreign Postdoctoral Fellowship Program of the Israel Academy of Sciences and Humanities and by the Israel Science Foundation (grant
agreement 2066/18). }
 \\ \href{jingzhangjz13@gmail.com}{jingzhan@alumni.cmu.edu} }
\date{\today}
\begin{document}

\maketitle

\begin{abstract}
    We investigate the interaction between compactness principles and guessing principles in the Radin forcing extensions. In particular, we show that in any Radin forcing extension with respect to a measure sequence on $\kappa$, if $\kappa$ is weakly compact, then $\diamondsuit(\kappa)$ holds. This provides contrast with a well-known theorem of Woodin, who showed that in a certain Radin extension over a suitably prepared ground model relative to the existence of large cardinals, the diamond principle fails at a strongly inaccessible Mahlo cardinal. Refining the analysis of the Radin extensions, we consistently demonstrate a scenario where a compactness principle, stronger than the diagonal stationary reflection principle, holds yet the diamond principle fails at a strongly inaccessible cardinal, improving a result from \cite{MR3960897}.
\end{abstract}

\section{Introduction}

This paper contributes to the study of the interaction between compactness principles and guessing principles, specifically, in the context of Radin forcing \cite{MR670992}.
Recall that for a regular uncountable cardinal $\kappa$, $\diamondsuit(\kappa)$ asserts the existence of a sequence $\la S_\alpha\subset \alpha: \alpha<\kappa\ra$ such that for any $X\subset \kappa$, $\{\alpha<\kappa: X\cap \alpha = S_\alpha\}$ is stationary. An old open problem in this area asks if $\diamondsuit(\kappa)$ must hold at a weakly compact cardinal $\kappa$. 
We prove that this is indeed the case in the Radin forcing extensions, answering Question 37 from \cite{MR3960897} negatively.

\begin{theorem}\label{main}
Let $R_{\bar{U}}$ be the Radin forcing defined using a measure sequence $\bar{U}$ on $\kappa$. Then
\[
\Vdash_{R_{\bar{U}}} (\kappa \text{ is weakly compact } \implies \Diamond(\kappa) \text{ holds }).
\] 
\end{theorem}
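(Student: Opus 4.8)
The plan is to combine the fine structure of the Radin extension with a reflection argument powered by weak compactness, in the spirit of Jensen's proof that $\diamondsuit(\kappa)$ holds in $L$, with weak compactness playing the role of condensation. We work in $V[G]$ for $G$ generic over $R_{\bar{U}}$, assuming $\kappa$ is weakly compact (hence inaccessible) in $V[G]$, since otherwise the statement is vacuous. The first step is the standard analysis of Radin forcing: $G$ determines a club $C\subseteq\kappa$, and for $\gamma\in C$ there is a factorization $R_{\bar{U}}/p\cong R^{<\gamma}\times R^{\ge\gamma}$ below a suitable $p$, in which $R^{<\gamma}$ has size $<\kappa$ (using inaccessibility of $\kappa$) and its generic $G_\gamma$ is recoverable from $C\cap\gamma$, while the tail $R^{\ge\gamma}$ is sufficiently closed — via the Prikry property together with closure of the direct extension order — to add no bounded subsets of $\gamma$. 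Hence the models $V^{*}_\gamma:=V[G_\gamma]$ increase with $\gamma\in C$, every bounded subset of $\kappa$ lying in $V[G]$ belongs to some $V^{*}_\gamma$, $\mathcal P(\gamma)^{V[G]}=\mathcal P(\gamma)^{V^{*}_\gamma}$, and in particular $2^{<\kappa}=\kappa$. What distinguishes the Radin extension from an arbitrary model of $2^{<\kappa}=\kappa$ is the coherent, uniformly definable (from $C$ and $V$) chain $\langle V^{*}_\gamma:\gamma\in C\rangle$, together with the fact that each $V^{*}_\gamma$ is a set‑generic extension of $V$ by a poset of size $<\kappa$, so that every $X\subseteq\kappa$ in $V[G]$ is coded canonically by a sequence of $V$‑names $\langle\dot X_\gamma:\gamma\in C\rangle$ with $\dot X_\gamma\in V_\kappa$, and this code is a subset of $V_\kappa$.

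Granting this, the second step fixes a canonical diamond sequence. From the name‑codings one extracts, for each $\gamma<\kappa$, a well‑order $W_\gamma\in V_\kappa$ of $\mathcal P(\gamma)^{V[G]}$, with $\langle W_\gamma:\gamma<\kappa\rangle$ uniformly definable from $C$ and a fixed well‑order $\lhd$ of $V$. Define $\vec S=\langle S_\gamma:\gamma<\kappa\rangle$ by recursion: for $\gamma\in C$ let $S_\gamma$ be the $W_\gamma$‑least $A\subseteq\gamma$ for which there is a club $e\subseteq C\cap\gamma$ with $A\cap\beta\ne S_\beta$ for all $\beta\in e$ (if such $A$ exists, else $S_\gamma:=\emptyset$), and $S_\gamma:=\emptyset$ for $\gamma\notin C$. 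Since the clause defining $S_\gamma$ refers only to $\vec S\restriction\gamma$ and to the parameters $W_\gamma, C\cap\gamma$, this recursion is absolute and its restrictions are computed correctly in any transitive model carrying the relevant local data. Now suppose toward a contradiction that $\vec S$ is not a diamond sequence, and fix a counterexample $X\subseteq\kappa$ with witnessing club $E$, chosen to be minimal in the ordering on counterexamples induced by the $W_\gamma$'s through the canonical coding — the existence of a genuinely minimal such $X$ being the technical heart of the argument, discussed below. By the structural analysis, $X$ and $E$ are coded by subsets $R_X,R_E$ of $V_\kappa$.

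For the third step, assemble the structure $\mathfrak A:=(V_\kappa,\in,C,\langle W_\gamma\rangle,R_{\vec S},R_X,R_E,\lhd\restriction V_\kappa)$ on $V_\kappa$. The conjunction of the statements ``$\kappa$ is inaccessible'', ``$R_{\vec S}$ codes the output of the $W$‑recursion'', ``the set coded by $R_X$, together with the club coded by $R_E$, witnesses that this output is not a diamond sequence'', and ``the set coded by $R_X$ is the least such counterexample in the induced ordering'' is $\Pi^1_1$ over $\mathfrak A$ and holds there. Since $\kappa$ is $\Pi^1_1$‑indescribable, this conjunction reflects to stationarily many $\alpha<\kappa$; intersecting with the club $\operatorname{lim}(E\cap C)$ and discarding a bounded set, we obtain such an $\alpha\in E\cap C$ with $\alpha=\sup((E\cap C)\cap\alpha)$. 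Reflecting the first two clauses identifies the recursion output in $(V_\alpha,\in,\dots)$ with $\vec S\restriction\alpha$ and shows $X\cap\alpha$ is a counterexample at stage $\alpha$ witnessed by $E\cap C\cap\alpha$, which is club in $\alpha$; reflecting the last two clauses then forces the recursion to return $S_\alpha=X\cap\alpha$. But $\alpha\in E$ gives $X\cap\alpha\ne S_\alpha$, a contradiction, so $\vec S$ is a diamond sequence.

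The hard part is the minimality of the counterexample invoked in the second and third steps. Since $\mathcal P(\kappa)^{V[G]}$ carries no absolute well‑order, one cannot simply take ``the least'' counterexample: the lexicographic‑type order induced on subsets of $\kappa$ by the $W_\gamma$'s need not be well‑founded on all of $\mathcal P(\kappa)^{V[G]}$. The point is to show that, assuming $\diamondsuit(\kappa)$ fails, there is a counterexample whose canonical name‑code is minimal in a manner that is both expressible as a $\Pi^1_1$ property of $\mathfrak A$ and strong enough to pin down the reflected recursion — and this is exactly where the tree property must be used, to run a greedy least‑branch construction through limit stages: the coherence of the name‑codes across the $V^{*}_\gamma$ confines the candidate codes to a $\kappa$‑tree with levels of size $<\kappa$, on which the tree property supplies the cofinal branches needed to continue, restoring well‑foundedness of the induced order on the counterexamples that actually occur. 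This interaction between the Radin structure and weak compactness is essential: by Woodin's theorem, weak compactness cannot be replaced by mere inaccessibility or Mahloness, so the argument must genuinely exploit both the coherent chain of small models and the full strength of weak compactness, rather than any consequence — such as $2^{<\kappa}=\kappa$ — that already holds automatically at a weakly compact cardinal.
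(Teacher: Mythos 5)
Your proposal has a genuine gap at exactly the point you identify as ``the technical heart of the argument'': the existence of a minimal counterexample whose minimality is $\Pi^1_1$-expressible over $V_\kappa$ and reflects correctly to the recursion at level $\alpha$. This step is asserted, not proved. The order induced on subsets of $\kappa$ by the local well-orders $W_\gamma$ is a lexicographic-type order on branches of a tree and is ill-founded in general; the tree property gives you cofinal branches through $\kappa$-trees, but it does not well-order the set of branches, and you give no argument that the ``counterexamples that actually occur'' form a well-founded suborder. Even granting a minimal counterexample $X$, there is a mismatch in the reflection step: your recursion defines $S_\alpha$ as the $W_\alpha$-least counterexample among \emph{all} $A\subseteq\alpha$ in $V[G]$, whereas reflecting ``$X$ is minimal in the induced ordering on codes'' to $V_\alpha$ only constrains $X\cap\alpha$ relative to restrictions of codes of subsets of $\kappa$ --- the two notions of minimality need not agree, and in Jensen's $L$ argument it is precisely condensation that closes this gap. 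There is also no canonical coherent choice of names $\dot X_\gamma$: each $X\cap\gamma$ lies in $V[G_\gamma]$ and so has a $\lhd$-least name, but these names for different $\gamma$ need not cohere, so the ``$\kappa$-tree of candidate codes with small levels'' is not actually exhibited. In short, your strategy is a condensation-free Jensen argument from $\Pi^1_1$-indescribability plus a coherent chain of small models, which is essentially the open problem the paper is circumventing rather than solving; the specific Radin structure you invoke is never actually used to break the impasse.

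The paper's proof is entirely different and does not involve any reflection of a least counterexample. It shows (Lemma \ref{notweaklycompact}) that if $|2^\kappa|^M$ does not divide $lh(\bar{U})$ then $\kappa$ is not weakly compact in $V^{R_{\bar{U}}}$, via the characterization of weak compactness by the Weak Repeat Property and item 4 of Proposition \ref{characterization}; contrapositively, weak compactness in the extension forces $|2^\kappa|^M$ to divide $lh(\bar{U})$, and in that case the Cummings--Magidor argument (Theorem \ref{diamondholds}, via Observation \ref{GroundWitness}) produces a ground-model guessing function $\bar{w}\mapsto x^{\kappa(\bar{w})}_{lh(\bar{w})\ \mathrm{mod}\ 2^{\kappa(\bar{w})}}$ witnessing $V$-$\diamondsuit(\kappa)$, which is equivalent to $\diamondsuit(\kappa)$ in the Radin extension. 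If you want to salvage your approach you would need to either supply the missing well-foundedness and reflection arguments (which would likely resolve the general weakly-compact-implies-diamond problem) or redirect the use of weak compactness toward the arithmetic of $lh(\bar{U})$ as the paper does.
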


The special attention given to Radin forcing in this context, originates in results of Woodin \cite{CummingsWoodin}, who used Radin forcing to establish the consistency of a large cardinal $\kappa$, such as strongly inaccessible, Mahlo, and greatly Mahlo, with $\neg\Diamond(\kappa)$. In fact, Radin forcing is the only known method for producing models where the diamond principle fails fully on any large cardinals. 

The history of the relation between the diamond and compactness principles, goes back to the work of Kunen and Jensen \cite{JensenKunen}, who showed that  $\Diamond(\kappa)$ must hold at every subtle cardinal. 
In fact, they prove that the stronger property $\Diamond(Reg_\kappa)$ holds at such cardinals, where $\Diamond(Reg_\kappa)$ asserts that there exists a diamond sequence supported on regulars. 
The consistency of  $\neg\Diamond(Reg_\kappa)$ on weak compact cardinals was shown by Woodin, and improved by Hauser \cite{MR1164732} to indescribable cardinals, and by D\v{z}amonja and
Hamkins \cite{MR2279655} to strongly unfoldable cardinals. 
Each of these consistency results concerning the failure of the diamond principle on the regulars is established from its minimal corresponding  large cardinal assumption. 
In contrast, $\neg\Diamond(\kappa)$ at relatively small large cardinals, such as Mahlo cardinals, is known to have a significantly stronger consistency strength. 
 Jensen \cite{Jensen69} has shown that $\neg\Diamond(\kappa)$ 
at a Mahlo cardinal $\kappa$ implies the existence of $0^\#$.  Zeman \cite{Zeman00} improved
the lower bound to the existence of an inner model with a cardinal $\kappa$, such that for every $\gamma < \kappa$, the set
$\{ \alpha < \kappa \mid o(\alpha) \geq \gamma\}$ is stationary in $\kappa$. 
The last large cardinal assumption is quite close to the hypermeasurability (large cardinal) assumptions used by Woodin to force $\neg\Diamond(\kappa)$ at a greatly Mahlo cardinal.

In \cite{MR3960897} the first author studied Radin forcing $R_{\bar{U}}$, and the connection between 
properties of measure sequence $\bar{U}$ on $\kappa$ and large cardinal properties of $\kappa$ in generic extensions by $R_{\bar{U}}$. It is shown that Woodin's construction of $\neg\Diamond(\kappa)$ can be extended to large cardinal properties such as stationary reflection principles. A question on whether we can extend the analysis to get a model of $\kappa$ being weakly compact and $\neg 
\diamondsuit(\kappa)$ was asked in \cite{MR3960897}. Theorem \ref{main} answers this question in the negative by showing that this approach cannot yield significantly stronger results. 
Two properties of measure sequences $\bar{U}$ which were isolated in   \cite{MR3960897}, are the Weak Repeat Property (WRP) and Local Repeat Property (LRP). It is shown in \cite{MR3960897} that $\bar{U}$ satisfies WRP if and only if $\kappa$ is weakly compact in generic extensions by $R_{\bar{U}}$, and asks if the stronger property of LRP has a similar characterization. 
We answer this question in Proposition \ref{LRPcharacterization}, showing that LRP characterizes the large cardinal property of almost ineffability. 
In the rest of the paper we extend the study of compactness and guessing principles in Radin forcing extensions. 
The organization of the paper is as follows: 

\begin{enumerate}
    \item In Section \ref{premilinaries}, we provide some background on the type of forcing notions that we will work with for the rest of the paper.
    \item In Section \ref{mildlargecardinals}, we study  characterizations of almost ineffable cardinals and weakly compact cardinals in the Radin extension based on certain properties of the measure sequence used to defined the forcing.
    \item In Section \ref{guessing}, we isolate scenarios when a variety of guessing principles can hold in the Radin extensions. 
    \item In section \ref{amenable}, we demonstrate a scenario where a compactness principle, stronger than the diagonal stationary reflection principle, holds but the diamond principle fails at a strongly inaccessible cardinal. 
\end{enumerate}

\section{Preliminaries}\label{premilinaries}

\subsection{Measure sequences}
A \emph{measure sequence} is a sequence $\bar{w} = \la \kappa(\bar{w})\ra \fr \la w(\tau) \mid \tau < lh(\bar{w})\ra$, where each $w(\tau)$ is a $\kappa(\bar{w})$-complete ultrafilter on $V_{\kappa(\bar{w})}$, and $\bar{w}$ is derived from an elementary embedding $j=j_{\bar{w}} : V \to M$ with $\crit(j) = \kappa(\bar{w})$ in the sense that for $A \subseteq V_{\kappa(\bar{w})}$ and $\tau < lh(\bar{w})$,
$$A \in w(\tau) \iff  \bar{w}\uhr\tau \in j(A).$$ 
In particular, $w(0)$ is equivalent to the normal measure derived from $j$ and $lh(\bar{w})\leq j(\kappa(\bar{w}))$.
The class of all measure sequences is denoted by $\MS$. 
For a set $A \subseteq \MS$, we denote the set of critical points $\kappa(\bar{w})$ for $ \bar{w} \in A$ by  $O(A)$. 

To clarify, for a measure sequence $\bar{w}$ and $\tau\leq lh(\bar{w})$, $\bar{w}\restriction \tau$ stands for $\la \kappa(\bar{w})\ra \fr \la w(\eta): \eta<\tau\ra$ and $\bigcap \bar{w}$ stands for $\bigcap_{\eta<lh(\bar{w})} w(\eta)$.\\

All measure sequences $\bar{w}$ in our constructions are assumed to satisfy $\MS \cap V_{\kappa(\bar{w})} \in \bigcap \bar{w}$ (see the discussion involving the set $\bar{A}$ in \cite[page 1402]{MR2768695}, for further details).

\noindent

\begin{definition}
Let $\Bar{U}$ be a measure sequence on $\kappa=\kappa(\bar{U})$ constructed by $j: V\to M$, and  $A\subset \mathcal{MS}\cap V_\kappa$. We say 
\begin{enumerate}
    \item $A$ is \emph{$\bar{U}$-measure-one} if $A\in \bigcap \bar{U}$. 
    \item $A$ is \emph{$\bar{U}$-tail-measure-one} if there is some $\gamma<lh(\bar{U})$ such that $A \in U(i)$ for all $\gamma<i<lh(\bar{U})$.
    \item $A$ is \emph{$\bar{U}$-positive} if $\{i<lh(\bar{U}): A\in U(i)\}$ is cofinal in $lh(\bar{U})$.
    \item $A$ is \emph{$\bar{U}$-stationarily-positive} if $\{i<lh(\bar{U}): A\in U(i)\}$ is a stationary subset of $lh(\bar{U})$.
    \item $A$ is \emph{$\bar{U}$-non-null} if there is some $\gamma<lh(\bar{U})$, $A\in U(\gamma)$.
\end{enumerate}

\end{definition}

\begin{definition}
We say a function $b : \MS \cap V_\kappa \to V_\kappa$
\begin{enumerate}
    \item is a \emph{measure function} if for any $\Bar{w}\in dom(b)$, $b(\Bar{w})\in \bigcap \Bar{w}$. 
    \item is a \emph{tail measure function} if for any $\Bar{w}\in dom(b)$, $b(\Bar{w})$ is $\bar{w}$-tail-measure-one.
\end{enumerate}
\end{definition}

\subsection{Radin forcing}

Let $R_{\Bar{U}}$ be the Radin forcing defined using $\Bar{U}$, constructed by $j: V\to M$. This forcing was first invented in \cite{MR670992}. The notations regarding the measure sequence and its constructing embedding are fixed for the rest of the paper unless otherwise stated.

Our notations and presentation follow \cite{MR3960897} or \cite{MR2768695} for the most part, with the exception of using the forcing convention by which for two conditions $p,q$, $p \geq q$ means that $q$ extends $p$ (i.e., $q$ is more informative). We refer the readers to the above for the definition of this forcing. 

\begin{notation}
\begin{enumerate}
    \item For any $A\in \bigcap \bar{U}$, there exists another $A'\subset A$ in $\bigcap \bar{U}$ satisfying that for any $\bar{w}\in A'$, $A'\cap V_{\kappa(\bar{w})}\in \bigcap \bar{w}$. We may without loss of generality assume all the measure one sets satisfy this property for the rest of this paper.
    \item 
    Conditions $p \in R_{\bar{U}}$ are finite sequences $ p = \la d_i \mid i \leq k\ra$ where each $d_i$ is either of the form $d_i = \la \kappa_i\ra$ for some $\kappa_i < \kappa$, or of the form $d_i = \la \bar{\mu}_i,a_i\ra$ where $\bar{\mu}_i$ is a measure sequence of length $lh(\bar{\mu}_i) > 0$ and $a_i \in \cap \bar{\mu}_i$. 
    We denote $\bar{\mu}(d_i) = \la \kappa_i\ra$ in the former case and $\bar{\mu}(d_i) = \bar{\mu}_i$ in the latter. We require that the top component $\bar{\mu}(d_k) = \bar{U}$.
    We also write $p = p_0\fr d_k$, where $p_0 = \la d_i \mid i < k\ra$ denotes the \emph{lower part} of $p$, and  $d_k = (\bar{U}, A^p)$ denotes its \emph{top part}. 
    \item Given a measure sequence $\bar{w}$ and a $\bar{w}$-measure-one set $c$, we say a finite sequence of measure sequences with increasing critical points $\overrightarrow{\eta}$ is \emph{addable} to $(\bar{w}, c)$, or $\overrightarrow{\eta}<<(\bar{w}, c)$, if for each $\bar{u}\in \overrightarrow{\eta}$, $\bar{u}\in c$ and $c\cap V_{\kappa(\bar{u})}\in \bigcap \bar{u}$.
\end{enumerate}
\end{notation}

\begin{remark}
We clarify the following abuse of notations:
$\fr$ could mean concatenation or one-step extension depending on the context. However, our usage is without ambiguity: 
\begin{enumerate}
\item If the object after $\fr$ is a pair, for example $p\fr (\bar{w}, A)$ where $\bar{w}$ is a measure sequence and $A$ is $\bar{w}$-measure-one, then $\fr$ means concatenation. In this case, $\kappa(\bar{w})>\kappa(\bar{u})$ for any measure sequence $\bar{u}$ appearing on $p$. 
\item If the object after $\fr$ is a measure sequence, for example $p\fr \bar{w}$, then this means it is a one-step extension. In this case, 
$\kappa(\bar{w})$ belongs to the measure one set $A^{p_i}$ in one of the components $p_i$ of $p$.
\end{enumerate}
\end{remark}

\begin{definition}
$\Bar{U}$ satisfies the \emph{Repeat Property} (RP) if there exists $\gamma<lh(\Bar{U})$ such that $\bigcap \Bar{U}\restriction \gamma = \bigcap \Bar{U}$. We say $\gamma$ is a repeat point for $\bar{U}$ and $\gamma$ witnesses $\Bar{U}$ satisfies RP. 
\end{definition}

\begin{definition}
We say $\gamma<lh(\Bar{U})$ is a $\emph{weak repeat point}$ if $\gamma$ witnesses $\Bar{U}\restriction \gamma+1$ satisfies RP. If $\gamma$ is not a weak repeat point, then it is \emph{novel}.
\end{definition}

%\begin{remark}
Mitchell \cite{Mitchell82} and Cummings-Woodin \cite{CummingsWoodin} independently proved that $\kappa$ is measurable in $V^{R_{\Bar{U}}}$ if and only if 
 $\cap\Bar{U} = \cap\bar{U'}$ for some measure sequence $\bar{U'}$ satisfying the RP.
%\end{remark}

For the rest of the paper, we may assume $\bar{U}$ does not satisfy RP. The reason is that suppose $\gamma$ is the first repeat point of $\bar{U}$, then $R_{\bar{U}}$ is forcing equivalent to $R_{\bar{U}\restriction \gamma}$ (as by definition the forcing only depends on $\bigcap \bar{U}=\bigcap\bar{U}\restriction \gamma$; see \cite{CummingsWoodinbook}) and $\bar{U}\restriction \gamma$ does not satisfy RP. Notice that if $\bar{U}$ does not satisfy RP, then there are unboundedly many $\gamma<lh(\bar{U})$ that are not weak repeat points.

\begin{remark}
If $\bar{U}$ does not satisfy RP and $cf(lh(\bar{U}))\leq \kappa$, then in $V^{R_{\bar{U}}}$, $\kappa$ becomes singular. This follows from the arguments in \cite{MR2768695}.
\end{remark}

\begin{remark}\label{tail2^kappa}
Suppose $|2^\kappa|^M$ does not divide $lh(\bar{U})$, then there exists some $\gamma<lh(\bar{U})$, such that for all $\gamma'\in (\gamma, lh(\bar{U}))$, $\gamma$ is not a weak repeat point. We sketch this when $lh(\bar{U})<|2^\kappa|^M$. Fix some function $f$ on $\kappa$ such that for each $\alpha<\kappa$, $f(\alpha)$ outputs a well ordering of $2^\alpha$ of length $|2^\alpha|$. For any $\gamma<lh(\bar{U})$, let $A\subset \kappa$ be such that $j(f)(\kappa)(\gamma)=A$. Consider the set $B_\gamma=\{\bar{w}\in V_\kappa\cap \MS: f(\kappa(\bar{w}))(lh(\bar{w}))=A\cap \kappa(\bar{w})\}$. It is easy to see that $B_\gamma\in U(\gamma)-\bigcup_{\gamma'<\gamma} U(\gamma')$. In general without assuming $lh(\bar{U})<|2^\kappa|^M$, by the assumption there exists some $\delta$ and $0<\rho<|2^\kappa|^M$ such that $lh(\bar{U})=\delta\cdot |2^\kappa|^M + \rho$. Then for $\delta\cdot |2^\kappa|^M \leq \gamma<lh(\bar{U})$ we modify the definition of $B_\gamma$ such that it contains $\bar{w}\in V_\kappa\cap \MS$ such that $f(\kappa(\bar{w}))(lh(\bar{w}) \mod |2^{\kappa(\bar{w})}|)=A\cap \kappa(\bar{w})$.
\end{remark}

\begin{comment}
By a theorem of Woodin (see \cite{CummingsWoodinbook}), for any measure sequence $\Bar{U}$, there exists another measure sequence $\Bar{U}'$ with no weak repeat point, such that $R_{\Bar{U}}$ is equivalent to $R_{\Bar{U}'}$.%\todo{Maybe need to change the definition of a measure sequence derived form an embedding $j$ for that, to allow skipping some of the measures.}
\end{comment}

\begin{definition}
For any generic $G\subset R_{\bar{U}}$ over $V$, 
let 
\[\MS_G = \{ \bar{w} \in \MS \mid \exists p \in G, \thinspace p=\la d_i: i\leq k\ra, \bar{w}=\bar{\mu}(d_i) \text{ for some }i<k \}.\]
\[ C_G = \{ \kappa(\bar{w}) \mid \bar{w} \in \MS_G\}\]
We say $A\subset \kappa$ is \emph{generated by a set in $V$} if there exists $B\subset \mathcal{MS}$ such that $A= O(B\cap \MS_G)=_{\mathrm{def}}\{\kappa(\bar{w}): \bar{w}\in B\cap \MS_G\}$. 
\end{definition}

%\todo{Clarify: does MSG only consist of limit points? Maybe need two definitions one only for limit points and the other for all}

The following theorem asserts that any club in the Radin extensions $V[G]$, $G \subseteq R_{\bar{U}}$, where the regularity of $\kappa$ is preserved, contains a club generated by a $\bar{U}$-tail-measure-one set in $V$. 

\begin{theorem}[\cite{MR3960897}]\label{tailclub}
If $\bar{U}$ satisfies $cf(lh(\bar{U}))\geq \kappa^+$, where $\kappa=\kappa(\bar{U})$, then given $p_0^\frown (\bar{U}, A)=p\Vdash \dot{\tau}$ is a club subset of $\kappa$, there exists a measure one set $A'\subset A$ and a $\bar{U}$-tail-measure-one set $\Gamma$ such that $p_0^\frown (\bar{U}, A')$ forces $O(\Gamma\cap G) \subset \dot{\tau}$. 
\end{theorem}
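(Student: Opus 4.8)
The plan is to take for $\Gamma$, up to a shrinking of $A$, the set of $\bar w\in A$ for which membership of $\kappa(\bar w)$ in $\dot\tau$ is forced, and to check that this set is $\bar U$-tail-measure-one by transporting it along the embedding $j\colon V\to M$ from which $\bar U$ is derived.

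First I would localize, for each $\bar w\in A$, the decision of the statement ``$\kappa(\bar w)\in\dot\tau$''. Consider the one-step extension $p\fr\bar w$; this is legitimate since $\bar w\in A$ and $A\cap V_{\kappa(\bar w)}\in\bigcap\bar w$ by our standing conventions on measure-one sets. Below $p\fr\bar w$ the forcing $R_{\bar U}$ factors, by the standard analysis of Radin forcing, as a product of a forcing of cardinality $<\kappa$ — essentially a Radin forcing on $\kappa(\bar w)$ below the lower part of $p\fr\bar w$, of size at most $2^{\kappa(\bar w)}<\kappa$ since $\kappa$ is inaccessible — and a tail forcing, consisting of the conditions all of whose measure sequences have critical point above $\kappa(\bar w)$, which has the Prikry property with respect to a $\kappa(\bar w)^{+}$-closed direct-extension order and hence adds no new subset of $\kappa(\bar w)$. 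Consequently the value of ``$\kappa(\bar w)\in\dot\tau$'' — indeed of $\dot\tau\cap(\kappa(\bar w)+1)$ — is decided by the small factor, and by the Prikry property of that Radin forcing it is already decided by a direct extension of $p\fr\bar w$ modifying only the measure-one sets attached to entries at or below $\kappa(\bar w)$; write $\varepsilon(\bar w)\in 2$ for the resulting truth value. A standard diagonal-intersection argument — using the closure of $\bigcap\bar U$ under the diagonal intersection of a measure function — then yields a single measure-one $A'\subseteq A$ such that $p_0\fr(\bar U,A')$ forces: for every $\bar w\in A'\cap\MS_G$ the value of ``$\kappa(\bar w)\in\dot\tau$'' equals $\varepsilon(\bar w)$. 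Put $\Gamma=\{\bar w\in A':\varepsilon(\bar w)=1\}$; then $p_0\fr(\bar U,A')\Vdash O(\Gamma\cap G)\subseteq\dot\tau$, and it remains only to see that $\Gamma$ is $\bar U$-tail-measure-one.

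For this, fix $i$ with $1\le i< lh(\bar U)$. Since $A'\in U(i)$ we have $\bar U\restriction i\in j(A')$, and ``$\Gamma\in U(i)$'' unwinds, through $j$, to the assertion that the one-step extension of $j\bigl(p_0\fr(\bar U,A')\bigr)$ by the measure sequence $\bar U\restriction i$ forces $\kappa\in j(\dot\tau)$ in $M$. Now $j(\dot\tau)$ is forced to be a club in $j(\kappa)$; once $\bar U\restriction i$, a measure sequence of length $i\ge 1$, is put into the generic for $j(R_{\bar U})$, the ordinal $\kappa$ enters the Radin club $C_{j(G)}$, and because the Radin forcing governed by the entry $\bar U\restriction i$ adjoins a subset of $\kappa$ cofinal in $\kappa$, $\kappa$ is in fact a limit point of $C_{j(G)}$. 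The crucial point is then that $j(\dot\tau)\cap\kappa$ is forced to be unbounded in $\kappa$; granting this, closedness of $j(\dot\tau)$ gives $\kappa\in j(\dot\tau)$, hence $\Gamma\in U(i)$, for every such $i$. (One cannot hope for $\Gamma\in U(0)$: below the ``entry'' $\bar U\restriction 0=\la\kappa\ra$ there is no sub-forcing at all, so $\kappa$ need not enter $j(\dot\tau)$ — and this is exactly why the conclusion produces a tail-measure-one, rather than a measure-one, set.)

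The main obstacle is precisely the claim just flagged: that inserting the entry $\bar U\restriction i$ forces $j(\dot\tau)\cap\kappa$ to be unbounded in $\kappa$. The difficulty is that the generic for $j(R_{\bar U})$ below that entry is a generic for $R_{\bar U\restriction i}$, which is \emph{not} $R_{\bar U}$ since $\bar U$ has no repeat point, so one must analyse how the $j(R_{\bar U})$-name $j(\dot\tau)$ restricts to this sub-generic and show that this restriction is forced to be a club in $\kappa$. I expect the right preparation is first to normalize $\dot\tau$ by means of the Prikry property — shrinking $A$ so that the local decisions $\varepsilon(\bar w)$ are, uniformly, witnessed by the restriction of the generic Radin club below $\kappa(\bar w)$ — after which the reflection identifies $j(\dot\tau)\cap\kappa$, below $\bar U\restriction i$, with a club obtained in the same uniform way from the $R_{\bar U\restriction i}$-generic, whose own Radin club is cofinal in $\kappa$. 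The hypothesis $cf(lh(\bar U))\ge\kappa^{+}$ enters here to guarantee that $\kappa$ stays regular after $R_{\bar U}$, so that $\dot\tau$ is genuinely a club in a regular cardinal and the reflection is not spoiled by $\kappa$ changing cofinality — which is what makes such a normalization available.
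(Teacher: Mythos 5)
The paper does not reprove this theorem (it is imported from \cite{MR3960897}), so your argument must stand on its own; it has one genuine gap in what you actually write, plus an acknowledged hole at the central point. The gap: your $\varepsilon(\bar w)$ is obtained by deciding ``$\kappa(\bar w)\in\dot\tau$'' below a direct extension of $p\fr\bar w$, i.e.\ below a condition whose lower part is exactly $p_0\fr(\bar w,B_{\bar w})$ for some $B_{\bar w}\in\bigcap\bar w$. But a generic $G$ containing $p_0\fr(\bar U,A')$ with $\bar w\in\MS_G$ contains a condition $t\fr(\bar w,B')\fr r$ in which $t$ properly extends $p_0$ by finitely many measure sequences from $A'\cap V_{\kappa(\bar w)}$; this extends your deciding condition only if those interpolated sequences lie in $B_{\bar w}$ and $B'\subseteq B_{\bar w}$. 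The diagonal intersection you invoke controls the measure-one sets \emph{above} $\kappa(\bar w)$ (that is what $\Delta_{\bar w}A_{\bar w}$ does in Lemma \ref{RepresentationLemma0}); it cannot arrange $A'\cap V_{\kappa(\bar w)}\subseteq B_{\bar w}$ simultaneously for all $\bar w\in A'$ — that is a ``reverse'' diagonal intersection and fails in general. Since ``$\kappa(\bar w)\in\dot\tau$'' is genuinely not decided by $p\fr\bar w$ (different extensions interpolating different points below $\bar w$ can force it differently), $\varepsilon(\bar w)$ is not well defined and $p_0\fr(\bar U,A')\Vdash O(\Gamma\cap G)\subseteq\dot\tau$ does not follow. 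The repair is to quantify over all lower parts $t\in V_{\kappa(\bar w)}$ (as the maximal-antichain construction in Lemma \ref{RepresentationLemma0} does), e.g.\ by putting into $\Gamma$ only those $\bar w$ for which \emph{every} condition of $R_{\bar w}$ below $p_0\fr(\bar w,A'\cap V_{\kappa(\bar w)})$ forces $\dot\tau\cap\kappa(\bar w)$ to be unbounded in $\kappa(\bar w)$, whence $\kappa(\bar w)\in\dot\tau$ by closedness.

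Second, the step you yourself flag as the main obstacle — that inserting $\bar U\restriction i$ forces $j(\dot\tau)\cap\kappa$ to be unbounded in $\kappa$ — is where the whole content of the theorem lies, and you do not prove it. It is moreover not true for every $i\geq 1$: for a fixed $\alpha<\kappa$ one only gets that ``$j(\dot\tau)\cap(\alpha,\kappa)\neq\emptyset$'' is appropriately forced for $i$ in a tail $T_\alpha$ of $lh(\bar U)$ (equivalently, the corresponding set of $\bar w$ is tail-measure-one), and one must then intersect the $\kappa$ many tails $T_\alpha$ and stabilize the $\leq\kappa$ many lower parts against cofinally many indices. This is the real role of $cf(lh(\bar U))\geq\kappa^+$: it makes the tail filter on $lh(\bar U)$ $\kappa^+$-complete, so that an intersection of $\kappa$ many tail-measure-one sets is still tail-measure-one (compare the use of the same hypothesis in Lemma \ref{reducetoground}). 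Your explanation of the hypothesis (regularity of $\kappa$ in the extension) is a true consequence but not the working part of the argument, and your parenthetical about $U(0)$ mislocates why the conclusion is only a \emph{tail}-measure-one set rather than a measure-one one.
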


The converse is also true: 

\begin{theorem}[\cite{MR3960897}]\label{tailclubconverse}
Suppose $\bar{U}$ is a measure sequence not satisfying RP and $B\subset \MS$ is a $\bar{U}$-tail-measure-one set. Then in $V[G]$ where $G\subset R_{\bar{U}}$ is generic over $V$, $O(B\cap \MS_G)$ contains a club subset of $\kappa$.
\end{theorem}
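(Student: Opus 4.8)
The plan is to show that $O(B \cap \MS_G)$ is both unbounded in $\kappa$ and closed, working in $V[G]$. Fix a condition $p = p_0 {}^\frown (\bar{U}, A^p)$ in the generic $G$; by shrinking we may assume $A^p \subseteq B$ on the top coordinate, using that $B$ is $\bar{U}$-tail-measure-one — here is the first technical point: $B$ itself need not be $\bar{U}$-measure-one, only tail-measure-one, so I would first pass to $\gamma < lh(\bar{U})$ with $B \in U(i)$ for all $i \in (\gamma, lh(\bar{U}))$, and exploit that the Radin forcing $R_{\bar{U}}$ is (by the standing reduction, since $\bar{U}$ does not satisfy RP) insensitive to replacing $\bar{U}$ by a tail $\bar{U}\restriction \gamma'$ with $\gamma' > \gamma$ — more carefully, one uses that the large set attached to the top coordinate of any condition can be refined to lie in $\bigcap \bar{U}\restriction(\gamma,lh(\bar{U})) \subseteq \bigcap_{i > \gamma} U(i)$, hence inside $B$. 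For unboundedness: given any $\alpha < \kappa$ and any condition $q \in G$, I would extend $q$ by adding a one-step point $\bar{w} \in A^q \cap B$ with $\kappa(\bar{w}) > \alpha$ (possible since $A^q \cap B$ is $\bar{U}$-measure-one once we are below the appropriate tail, and such sets have elements of arbitrarily large critical point by normality of $U(0)$); a density argument then shows $O(B \cap \MS_G)$ meets $(\alpha, \kappa)$.

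For closure, suppose $\delta < \kappa$ is a limit point of $C = O(B \cap \MS_G)$; I must produce $\bar{w} \in B \cap \MS_G$ with $\kappa(\bar{w}) = \delta$. The standard mechanism is: a condition $q \in G$ forcing $\delta \in \mathrm{acc}(C)$ must already contain a measure sequence $\bar{\mu}$ on some coordinate with $\kappa(\bar{\mu}) = \delta$, or else contain a coordinate $(\bar{\mu}, a)$ with $\delta$ a limit point of critical points of sequences addable into $a$ below $\delta$. In the former case, since conditions in $G$ have been refined so that every measure sequence appearing on them lies in $B$ (by the refinement of large sets to subsets of $B$, together with the coherence clause $A' \cap V_{\kappa(\bar w)} \in \bigcap \bar w$ from Notation, which propagates membership in $B$ downward along coordinates), we get $\bar{\mu} \in B$ and we are done. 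In the latter case I would run a density argument: below $q$ it is dense to add such a $\bar{\mu}$ on the interval governed by $a$, and any generic object meeting these dense sets supplies the required sequence on coordinate with critical point $\delta$; this is exactly where Radin's ``Prikry-type'' factorization of $R_{\bar{U}}$ below a coordinate as a product/iteration is invoked, so that genericity below $\delta$ can be arranged while keeping the rest of the condition fixed.

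I expect the main obstacle to be the bookkeeping around ``tail-measure-one'' versus ``measure-one'': one must consistently arrange that all measure sequences showing up in conditions in $G$ (not just the one-step points, but those appearing inside the addable sequences) have their critical points — and the sequences themselves — inside $B$, despite $B$ failing to be in $\bigcap \bar{U}$. The cleanest route is to fix at the outset $\gamma$ with $B \supseteq$ some $A^\star \in \bigcap \bar{U}\restriction(\gamma, lh(\bar U))$ and observe that since $R_{\bar U} = R_{\bar U \restriction \gamma'}$ for the relevant purposes, we may as well assume $B \in \bigcap \bar U$ and moreover, by Notation~(1), that $B$ is ``self-coherent'' in the sense $\bar w \in B \Rightarrow B \cap V_{\kappa(\bar w)} \in \bigcap \bar w$; then every density argument above goes through verbatim with $B$ in place of an arbitrary large set. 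A secondary, more routine obstacle is verifying that the club we produce is genuinely a subset of $\kappa$ and not all of $\kappa$ trivially — i.e., that $\kappa$ remains regular, which holds because $cf(lh(\bar U)) \geq \kappa^+$ is \emph{not} assumed here, so one should note instead that $O(B \cap \MS_G)$ being club is a statement that makes sense and is nontrivial precisely in the cases where $\kappa$ stays regular, and is vacuous otherwise; alternatively one interprets ``club'' appropriately and the argument is unaffected.
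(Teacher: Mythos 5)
This theorem is not proved in the paper --- it is quoted from \cite{MR3960897} --- so your argument can only be judged on its own terms, and it has a fatal structural flaw: you set out to show that $O(B\cap\MS_G)$ is itself closed and unbounded, whereas the theorem only claims (and in general only \emph{can} claim) that it \emph{contains} a club. The set $O(B\cap\MS_G)$ need not be closed. Suppose for instance that $B\in U(0)$, $B\notin U(1)$, and $B\in U(\xi)$ for all $\xi\geq 2$ (such $B$ exist under mild non-degeneracy of the sequence). Since $B^c\cup\{\bar w: lh(\bar w)\neq 1\}\in\bigcap\bar U$, a tail of the length-$1$ sequences in $\MS_G$ lies outside $B$; yet each of their critical points $\delta$ is, via the Prikry forcing $R_{\bar w_\delta}$, a limit of critical points of length-$0$ sequences of $\MS_G$, which do lie in $B$ because $B\in U(0)$. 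So $\delta$ is a limit point of $O(B\cap\MS_G)$ that does not belong to it. Your closure step papers over exactly this: the sequence $\bar\mu$ with $\kappa(\bar\mu)=\delta$ witnessing $\delta\in\mathrm{acc}(C_G)$ need not be in $B$, and no refinement of the conditions in $G$ can arrange that it is.

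The reason no such refinement exists is the second gap: your reduction of the tail-measure-one case to the measure-one case is invalid. A top part $(\bar U,A)$ of a condition requires $A\in\bigcap\bar U=\bigcap_{i<lh(\bar U)}U(i)$; if $B\notin U(i)$ for some $i\leq\gamma$, then $A\cap B\notin\bigcap\bar U$ and $(\bar U,A\cap B)$ is simply not a condition. The paper's standing equivalence is $R_{\bar U}\simeq R_{\bar U\restriction\gamma}$ for $\gamma$ a \emph{repeat point} --- an initial segment, and only because $\bigcap\bar U=\bigcap\bar U\restriction\gamma$ there; there is no equivalence with a ``tail'' of the measure sequence, and under the standing no-RP assumption the measures $U(i)$ for $i\leq\gamma$ cannot be discarded. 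The entire content of the theorem lives in the gap between tail-measure-one and measure-one: for measure-one $B$ a tail of $\MS_G$ lies in $B$ and the claim is easy, whereas here one must locate a club properly inside $C_G$. The standard argument uses the no-repeat-point hypothesis to produce, for suitable $\nu\geq\gamma$, a set $A_\nu$ with $A_\nu\in U(\xi)$ if and only if $\xi>\nu$ (so that $A_\nu\cap B^c$ is $\bar U$-null and a tail of $\MS_G\cap A_\nu$ lies in $B$), and then shows by a local reflection argument --- splitting a measure-one set into the $\bar w$ for which $A_\nu\cap V_{\kappa(\bar w)}$ is $\bar w$-null (there $O(A_\nu\cap\MS_G)$ is bounded below $\kappa(\bar w)$) and those for which it is $\bar w$-tail-measure-one with $\bar w\in A_\nu$ --- that $O(A_\nu\cap\MS_G)$ contains a club; for $\nu<\kappa$ this club is essentially the $(\nu+1)$-st iterated set of accumulation points of $C_G$. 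None of this appears in your proposal. Your unboundedness argument, by contrast, is fine: it only needs $A^q\cap B\in U(i)$ for $i>\gamma$, hence nonempty above every rank, and does not require the (impossible) shrinking of the top set into $B$.
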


The following result shows that Theorem \ref{tailclub} about clubs in Radin generic extensions does not extend to stationary sets.
\begin{proposition}
Suppose $lh(\bar{U})\leq |2^\kappa|^M$ and $cf(lh(\bar{U}))\geq \kappa^+$, then in $V^{R_{\bar{U}}}$, there exists a partition $c: \kappa \to \kappa$, such that for any unbounded $A\subset C_G$ generated by a set in $V$, it is the case that $c'' O(A)=\kappa$.
In particular, $O(B)$ is not $c$-homogeneous for any $\bar{U}$-non-null set $B$.
\end{proposition}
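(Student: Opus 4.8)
The plan is to read off the colouring $c$ from the generic data $\langle \bar w_\mu \mid \mu \in C_G \rangle$, after first normalizing the conditions in two ways that are legitimate by item (1) of the Notation and available precisely because $lh(\bar U) \le |2^\kappa|^M$: we shrink all measure-one sets so that (i) every measure sequence $\bar w$ appearing in a condition satisfies $lh(\bar w) < |2^{\kappa(\bar w)}|$, and (ii) every such $\bar w$ lies in $\bigcup_{\gamma < lh(\bar U)} B_\gamma$, where $B_\gamma$ is the set from Remark~\ref{tail2^kappa}. For (i), the set $\{\bar w : lh(\bar w) < |2^{\kappa(\bar w)}|\}$ belongs to $U(\tau)$ for every $\tau < |2^\kappa|^M$, hence to every $U(\tau)$ with $\tau < lh(\bar U)$, i.e.\ it is $\bar U$-measure-one; for (ii), $\bigcup_\gamma B_\gamma \supseteq B_\tau \in U(\tau)$ for all $\tau$, so it too is $\bar U$-measure-one. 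We use freely the computation of Remark~\ref{tail2^kappa} that $B_\gamma \in U(i)$ iff $i = \gamma$, as well as the fact that, since $cf(lh(\bar U)) \ge \kappa^+$, $\kappa$ remains regular (in fact strongly inaccessible) in $V[G]$ and $C_G$ is a club in $\kappa$ of order type $\kappa$.

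The first step is a dichotomy which simultaneously pins down the meaning of ``unbounded'' in the statement and reduces its last sentence to the first: for $B \subseteq \MS$, the set $O(B \cap \MS_G)$ is unbounded in $\kappa$ iff $B$ is $\bar U$-non-null. Indeed, if $B$ is $\bar U$-null then $(\MS \cap V_\kappa) \setminus B \in \bigcap \bar U$, so below any condition $q$ one can shrink every measure-one set of $q$ into $(\MS \cap V_\kappa) \setminus B$; below the resulting condition no new member of $B$ enters $\MS_G$, so, counting the finitely many blocks of that condition, $O(B \cap \MS_G)$ is bounded. Conversely, if $B \in U(\gamma_0)$ then, given a condition $p = p_0^\frown(\bar U, A^p)$ and $\alpha < \kappa$, the set $A^p \cap B \cap \{\bar w : \kappa(\bar w) > \alpha\}$ lies in $U(\gamma_0)$ and hence is nonempty; choosing $\bar w$ in it with $\kappa(\bar w)$ above all critical points in $p_0$, the standing assumption on measure-one sets makes $\bar w$ addable to $(\bar U, A^p)$, so $p^\frown\bar w$ forces a point of $O(B \cap \MS_G)$ above $\alpha$. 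Thus the sets $\{p : p \Vdash \exists\mu>\alpha\,(\mu\in O(B\cap\MS_G))\}$ are dense for all $\alpha<\kappa$, and $O(B\cap\MS_G)$ is unbounded.

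The decisive step is to show that a $\bar U$-non-null $B$ generates a set which is co-bounded inside $O(B_{\gamma_0} \cap \MS_G)$ whenever $B \in U(\gamma_0)$. Here $B_{\gamma_0} \setminus B = B_{\gamma_0} \cap ((\MS \cap V_\kappa) \setminus B)$ is contained in $B_{\gamma_0}$, hence can meet only $U(\gamma_0)$, and it is not in $U(\gamma_0)$ because $B \in U(\gamma_0)$; so $B_{\gamma_0} \setminus B$ is $\bar U$-null, and by the dichotomy $O((B_{\gamma_0} \setminus B) \cap \MS_G)$ is bounded, say by $\beta < \kappa$, whence $O(B_{\gamma_0} \cap \MS_G) \setminus \beta \subseteq O(B \cap \MS_G)$. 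It therefore suffices to construct $c : \kappa \to \kappa$ in $V[G]$ such that for every $\gamma < lh(\bar U)$ and every $\xi < \kappa$ the set $\{\mu \in O(B_\gamma \cap \MS_G) : c(\mu) = \xi\}$ is unbounded in $\kappa$: for then, given a $\bar U$-non-null $B$, any $\gamma_0$ with $B \in U(\gamma_0)$, and any $\xi$, this set (taken for $\gamma_0$) minus $\beta$ is unbounded and lies inside $O(B \cap \MS_G)$, so $c''\big(O(B \cap \MS_G)\big) = \kappa$; applied to $B$ non-null this also yields the last sentence, since $O(B \cap \MS_G)$ is then unbounded while $c''\big(O(B\cap\MS_G)\big) = \kappa > 1$, so it is not $c$-homogeneous.

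Finally, one constructs such a $c$ through a name $\dot c$ defined by recursion over the forcing, the value $c(\mu)$ for a point $\mu$ entering $C_G$ being read off from the block $\bar w_\mu$ together with a bounded amount of generic information already committed below $\mu$; here the slack $lh(\bar w_\mu) < |2^{\kappa(\bar w_\mu)}|$ is used to encode an ordinal below $\kappa$. The requirement is that, for every triple $(\gamma, \xi, \alpha)$, the set $\{p : p \Vdash \exists \mu > \alpha\,(\bar w_\mu \in B_\gamma \wedge \dot c(\mu) = \xi)\}$ be dense; given $p$, one adjoins a measure sequence from $A^p \cap B_\gamma$ whose critical point exceeds $\alpha$ and all critical points in the lower part of $p$ (possible since every measure-one set meets $B_\gamma$ in a $U(\gamma)$-large set), together with an auxiliary configuration of measure sequences beneath it forcing the prescribed colour. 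The main obstacle is precisely this last point: one must design the local rule for $\dot c$ so that it is simultaneously decidable by conditions, compatibly with the Prikry property of $R_{\bar U}$, and flexible enough that the colour of a freshly adjoined point lying in $B_\gamma$ can be driven to any prescribed value below $\kappa$; everything else is the bookkeeping of a standard genericity argument, after which genericity produces $c$ with the required property.
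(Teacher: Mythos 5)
Your setup is correct --- the dichotomy (a set generated by $B\in V$ is unbounded in $\kappa$ iff $B$ is $\bar U$-non-null) and the reduction to realizing every colour on each $O(B_\gamma\cap\MS_G)$ are both fine --- but the argument stops exactly at the point that constitutes the entire content of the proposition. You flag as ``the main obstacle'' the design of a local rule for $\dot c$ that is decidable by conditions, stable under further extension, and able to realize any prescribed colour, and then you do not supply one. This is not bookkeeping, for two reasons. First, no rule that reads the colour off ground-model data attached to $\bar w_\mu$ (such as your proposed encoding into the slack $lh(\bar w_\mu)<|2^{\kappa(\bar w_\mu)}|$) can work: if $c$ were generated by a fixed $h:\MS\to\kappa$ in $V$, then since the fibres $E_\xi=h^{-1}(\xi)$ partition $\MS$ and at most one of them lies in $U(\gamma)$, the set $B_\gamma\cap\bigcup_{\xi\neq\xi_0}E_\xi$ would be $\bar U$-null for some $\xi_0$; by your own dichotomy it generates a bounded set, so $c''O(B_\gamma\cap\MS_G)$ would have size less than $\kappa$. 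Hence the rule must genuinely consult the generic below $\mu$, and then a second problem appears which your sketch never addresses: whatever finite ``auxiliary configuration'' beneath $\bar w_\mu$ a condition adjoins to fix the colour can be disturbed by generic points added later in the interval between that configuration and $\bar w_\mu$, so the condition must also be able to \emph{freeze} the computation by shrinking its measure-one sets. Without an explicit rule that survives both tests, the density argument you invoke cannot even be stated.

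For comparison, the paper's rule resolves both issues at once. Fix injective enumerations $(x^\alpha_l: l<|2^\alpha|)$ of $2^\alpha$, set $A^\alpha_\tau=\{\bar v: x^{\kappa(\bar v)}_{lh(\bar v)}=x^\alpha_\tau\cap\kappa(\bar v)\}$, and let $A^\alpha_{\geq\tau}$ be the closure of $A^\alpha_\tau$ under ``some initial segment belongs to it.'' The rule is backward-looking: $c'(\bar u)$ is the \emph{maximal} element of $\MS_G\cap A^{\kappa(\bar u)}_{\geq lh(\bar u)}$, and $c=*\circ c'$ for a ground-model $*:\MS\to\kappa$ all of whose fibres are $\bar U$-positive. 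Freezing is possible because $A^{\kappa(\bar u)}_{lh(\bar u)}\notin\bigcup\bar u$ for measure-one many $\bar u$, so the component $(\bar u, E- A^{\kappa(\bar u)}_{\geq lh(\bar u)})$ legitimately forbids any later generic point between the matched $\bar w$ and $\bar u$ from entering the competition for ``maximal''; and the $\kappa$ many colours are obtained not by varying data on $\bar u$ but by choosing \emph{which} earlier point $\bar w$ gets matched: one first adjoins $\bar w\in E\cap B\cap A^{\kappa}_{\geq\gamma}$ with $B$ a $*$-fibre in some $U(\tau)$, $\tau\geq\gamma$, and only then adjoins $\bar u$ from the given $U(\gamma)$-large set above it. You would need to reproduce a construction of this kind (or an equivalent one) for your proof to be complete.
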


\begin{proof}
Let $(x^\alpha_l: l<|2^\alpha|)$ be an injective enumeration of $2^\alpha$ for $\alpha<\kappa$. Let $(x^\kappa_l: l<|2^\kappa|^M)=j(\la(x^\alpha_l: l<|2^\alpha|): \alpha<\kappa\ra)(\kappa)$. For $\alpha\leq \kappa$ and $\tau<(2^\alpha)^M$, define $A^{\alpha}_\tau =\{\bar{v}: x^{\kappa(\bar{v})}_{lh(\bar{v})}= x^{\alpha}_\tau\cap \kappa(\bar{v})\}$. Note that $A^{\kappa(\bar{w})}_{lh(\bar{w})}\not \in \bigcup \bar{w}$, for $\bar{U}$-measure-one many $\bar{w}$. To see this, it suffices to see that for any $\tau<lh(\bar{U})$, $A^\kappa_\tau \not\in \bigcup \bar{U}\restriction \tau$. Fix $\tau'<\tau$, we need to see that $A^\kappa_\tau \not \in U(\tau')$, which is equivalent to $\bar{U}\restriction \tau' \not\in j(A^\kappa_{\tau})$. Since $x^\kappa_{\tau'}\neq x^\kappa_\tau = j(x^\kappa_\tau)\cap \kappa$, by the definition of $A^\kappa_{\tau}$, we indeed have that $\bar{U}\restriction \tau' \not \in j(A^\kappa_\tau)$.

Define $A^{\alpha}_{\geq \tau}$ as follows: $\bar{v}\in A^{\alpha}_{\geq \tau}$ iff $\bar{v}\in A^{\alpha}_{\tau}$ or some initial segment of $\bar{v}$ is in $A^{\alpha}_{\tau}$. In $V[G]$ where $G\subset R_{\bar{U}}$ is generic over $V$, define a mapping $c': \MS_G\to \MS_G$ such that $\bar{w}$ is mapped by $c'$ to the maximal element in $\MS_G\cap A^{\kappa(\bar{w})}_{\geq lh(\bar{w})}$ if it exists. 

We show, using a density argument, that $c'$ has the following property: for any $A\in U(\gamma)$ and $B\in U(\tau)$ where $\gamma\leq \tau$, $c'[A \cap \MS_G] \cap B \neq \emptyset$.

Given $p=p_0 \fr (\bar{U}, E)\in R_{\bar{U}}$, we find $\bar{w}$ from $(E \cap B\cap A^{\kappa}_{\geq \gamma}) - V_{\kappa(\max(p_0))+1}$ and extend $p$ to $p\fr \bar{w}$. The reason why such $\bar{w}$ exists is that $A^{\kappa}_{\geq \gamma}\in \bigcap_{\gamma\leq i} U(i)$. Then we find some $\bar{u}\in A\cap E- V_{\kappa(\bar{w})+1}$ such that $A^{\kappa}_{\geq \gamma} \cap V_{\bar{u}} = A^{\kappa(\bar{u})}_{\geq lh(\bar{u})}$, and then we extend to $p_0\fr (\bar{w}, E\cap V_{\kappa(\bar{w})})\fr (\bar{u}, E-  A^{\kappa(\bar{u})}_{\geq lh(\bar{u})})\fr (\bar{U}, E)$, which forces $\dot{c}'(\bar{u})=\bar{w}$.

Finally, in $V$, if we let $*: \MS \to \kappa$ be such that for any $\alpha\in \kappa$, $(*)^{-1}(\alpha)$ is $\bar{U}$-positive, then in $V[G]$, $c=_{def} *\circ c'$ is as desired, namely, for any $A\subset \MS$ that is $\bar{U}$-non-null, $c [A\cap \MS_G] = \kappa$.
\end{proof}

The next lemma is a well-known fact that concerns getting nice representations for certain sets in the Radin forcing extension.

\begin{lemma}\label{RepresentationLemma0}
Suppose $p\in R_{\bar{U}}$ and a sequence of names $\la \dot{x}_\alpha\subset \alpha: \alpha<\kappa\ra$ are given. Then there exists an extension $q\leq p$ with $q_0=p_0$ and a function $f\in V$, such that for any $\bar{w}\in A^q$ , $f(\bar{w})$ is a $R_{\bar{w}}$-name for a subset of $\kappa(\bar{w})$ and $q\fr \bar{w} \Vdash f(\bar{w})=\dot{x}_{\kappa(\bar{w})}$.
\end{lemma}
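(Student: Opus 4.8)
The plan is to use a density argument that "pre-decides" each name $\dot x_\alpha$ on a measure-one set, combined with the standard factoring of Radin forcing below a measure sequence $\bar w$ appearing on a condition. Recall that for $\bar w \in A^p$ with $\kappa(\bar w) < \kappa$, the forcing $R_{\bar U}$ below $p \fr \bar w$ factors as $R_{\bar w} \times R_{\bar U}/(p\fr\bar w)$, where the first factor has size less than $\kappa$ (it is contained in $V_{\kappa(\bar w)+1}$ up to isomorphism, in particular of size $< \kappa$), and the "upper part" $R_{\bar U}/(p\fr\bar w)$ is $\kappa(\bar w)^+$-closed, hence adds no new subsets of $\kappa(\bar w)$. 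Therefore, from the point of view of the full forcing, $\dot x_{\kappa(\bar w)}$ evaluated below $p \fr \bar w$ is really an $R_{\bar w}$-name: there is an $R_{\bar w}$-name $\sigma$ and a condition in the upper part forcing $\dot x_{\kappa(\bar w)} = \sigma$.

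First I would handle a single $\bar w$: given $p$ and $\bar w \in A^p$, pass to the factored poset and use closure of the upper part together with a maximal-antichain argument in $R_{\bar w}$ to produce a canonical $R_{\bar w}$-name $\sigma_{\bar w}$ and an upper-part condition $r_{\bar w} \le (p\fr\bar w)^{\text{upper}}$ forcing $\dot x_{\kappa(\bar w)} = \sigma_{\bar w}$. The next step is to do this uniformly: I want the shrinking of $A^p$ to $A^q$ and the definition of $f$ to be carried out in $V$. The point is that the data "$(\sigma_{\bar w}, r_{\bar w})$" for each $\bar w$ can be chosen in $V$ by a fixed well-ordering, since both $R_{\bar w}$ and the relevant part of the upper forcing lie in $V_\kappa$ (or are coded there), so $\bar w \mapsto (\sigma_{\bar w}, r_{\bar w})$ is a genuine function in $V$. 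Setting $f(\bar w) = \sigma_{\bar w}$, I then need to absorb the upper-part conditions $r_{\bar w}$ into the shrunk measure-one set so that every one-step extension $q \fr \bar w$ already lies below $r_{\bar w}$; but in Radin forcing a one-step extension $q\fr\bar w$ by $\bar w\in A^q$ does not automatically sit below an arbitrary upper-part condition, so instead I would fold $r_{\bar w}$'s side conditions into the measure-one set $A^q$ itself — concretely, shrink $A^p$ so that for each $\bar w \in A^q$ the measure-one set assigned to $\bar w$ in the resulting condition $q\fr\bar w$ refines the relevant component of $r_{\bar w}$, using that $r_{\bar w}$ only constrains coordinates with critical point $\le \kappa(\bar w)$ and using Ło\'s/normality to diagonalize.

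The main obstacle I anticipate is precisely this last uniformization-and-absorption step: making sure that a single measure-one shrinking $A^q \subseteq A^p$ (with $q_0 = p_0$) simultaneously works for all $\bar w \in A^q$, so that the bare one-step extension $q\fr\bar w$ — with no additional side information — already forces $f(\bar w) = \dot x_{\kappa(\bar w)}$. This requires that the "correction" needed for each $\bar w$ depends only on $\bar w \restriction \kappa(\bar w)$-data that can be packaged into the measure-one sets appearing in $q$, and that the diagonal intersection of all these corrections is still in $\bigcap \bar U$. I would carry this out using the Mitchell-style decomposition of measure-one sets (each $\bar w \in A^q$ satisfies $A^q \cap V_{\kappa(\bar w)} \in \bigcap \bar w$, as arranged in the Notation) and $\kappa$-completeness of each $U(\tau)$ to take the requisite diagonal intersections; the closure of the upper forcing guarantees that no new subsets of $\kappa(\bar w)$ sneak in, so the $R_{\bar w}$-name $f(\bar w)$ genuinely captures $\dot x_{\kappa(\bar w)}$. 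Once the function $f$ and the condition $q$ are in place, the verification that $q \fr \bar w \Vdash f(\bar w) = \dot x_{\kappa(\bar w)}$ is then routine, following from genericity of the upper part over $R_{\bar w}$.
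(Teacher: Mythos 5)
Your plan is the paper's plan: for each $\bar{w}\in A^p$ you decide $\dot{x}_{\kappa(\bar{w})}$ as an $R_{\bar{w}}$-name by working in the factorization $R_{\bar{U}}/p\fr\bar{w}\simeq R_{\bar{w}}/\cdots\times R_{\bar{U}}/(\bar{U},A^p-V_{\kappa(\bar{w})+1})$, you mix over a maximal antichain of $R_{\bar{w}}$ to get a single name $f(\bar{w})$, and you diagonally intersect the resulting measure-one corrections to form $A^q$. The one soft spot is exactly the absorption step you yourself flag as the main obstacle, and your description of it is off in a way that matters. First, the upper factor constrains coordinates with critical point \emph{above} $\kappa(\bar{w})$, not $\leq\kappa(\bar{w})$; and it is not $\kappa(\bar{w})^+$-closed in the extension ordering (no nontrivial Radin-type quotient is) --- what is true, and what the paper uses, is that its \emph{direct extension} ordering $\leq^*$ is $(2^{|R_{\bar{w}}|})^+$-closed. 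That closure is precisely what lets you take the deciding upper-part condition to be a direct extension $(\bar{U},B_t)$ of the top component, i.e.\ a pure shrinking of the top measure-one set with no new measure sequences interpolated between $\kappa(\bar{w})$ and $\kappa$. Only a condition of that shape can be absorbed by a diagonal intersection: setting $A_{\bar{w}}=\bigcap_{t}B_t$ over the antichain and $A^q=\Delta_{\bar{w}\in A^p}A_{\bar{w}}$, the bare one-step extension $q\fr\bar{w}$ automatically refines $p_0\fr(\bar{w},A^p\cap V_{\kappa(\bar{w})})\fr(\bar{U},A_{\bar{w}})$, which is the condition forcing $f(\bar{w})=\dot{x}_{\kappa(\bar{w})}$. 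If $r_{\bar{w}}$ were allowed to have genuine lower-part components above $\kappa(\bar{w})$, no shrinking of measure-one sets would put $q\fr\bar{w}$ below it, so this is not a cosmetic point. With that correction your argument coincides with the paper's proof.
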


\begin{proof}
For each $\bar{w}\in A^p$, we can find a $\bar{U}$-measure-one $A_{\bar{w}}\subset A^p$ and a $R_{\bar{w}}$-name $f(\bar{w})$ such that $p_0\fr (\bar{w}, A^p\cap V_{\kappa(\bar{w})})\fr (\bar{U}, A_{\bar{w}})$ forces $f(\bar{w})=\dot{x}_{\kappa(\bar{w})}$. To see this, consider $$D=_{def}\{t\in R_{\bar{w}}: t\leq_{R_{\bar{w}}}p_0\fr (\bar{w}, A^p\cap V_{\kappa(\bar{w})}), $$ $$\exists B_t\in \bigcap \bar{U}, \exists R_{\bar{w}}\text{-name }\dot{s}_t, t\fr (\bar{U}, B_t)\Vdash \dot{s}_t = \dot{x}_{\kappa(\bar{w})}\}.$$\
%todo{Do we need to go over different $t$ values? Is it possible to apply the factorization only with $p_0$? }
Observe that $D$ is a dense subset of $R_{\bar{w}}$ below $p_0\fr (\bar{w}, A^p\cap V_{\kappa(\bar{w})})$: given any $s\in R_{\bar{w}}$ extending $p_0\fr (\bar{w}, A^p\cap V_{\kappa(\bar{w})})$, since $R_{\bar{U}}/p\fr \bar{w}\simeq R_{\bar{w}}/p_0\fr (\bar{w}, A^p\cap V_{\kappa(\bar{w})}) \times R_{\bar{U}}/(\bar{U}, A^p-V_{\kappa(\bar{w})+1})$ and $(R_{\bar{U}}/(\bar{U}, A^p-V_{\kappa(\bar{w})+1}),\leq^*)$ is $(2^{|R_{\bar{w}}|})^+$-closed (recall that $\leq^*$ refers to the \emph{direct extension ordering}), we can find $t \leq_{R_{\bar{w}}} s$ and an $R_{\bar{w}}$-name $\dot{s}_t$ as well as some $B_t\in \bigcap \bar{U}$, such that $(\bar{U}, B_t)\Vdash_{R_{\bar{U}}} t\Vdash_{R_{\bar{w}}} \dot{x}_{\kappa(\bar{w})}=\dot{s}_t$. This shows that $D$ is dense in $R_{\bar{w}}$.

Let $D'\subset D$ be a maximal antichain below $p_0\fr (\bar{w}, A^p\cap V_{\kappa(\bar{w})})$. Then we can cook up a $R_{\bar{w}}$-name $f(\bar{w})$ for a subset of $\kappa(\bar{w})$ such that $t\Vdash f(\bar{w})=\dot{s}_t$ for any $t\in D'$. Now it is immediate that $f(\bar{w})$ and $A_{\bar{w}}=\bigcap_{t\in D} B_t$ satisfy the requirement. 

Finally, we let $q\leq p$ be such that $q_0=p_0$ and $A^q=\Delta_{\bar{w}\in A^p} A_{\bar{w}}$, which is the desired extension of $p$ as sought.
\end{proof}

An immediate consequence of the previous lemma is: 
\begin{corollary}\label{RepresentationLemma}
Let $G\subset R_{\bar{U}}$ be generic over $V$. Then in $V[G]$, for any $X\subset \kappa$, there exists $f\in V$ such that for every $\bar{w}\in dom(f)\cap \MS$, $f(\bar{w})$ is an $R_{\bar{w}}$-name for a subset of $\kappa(\bar{w})$ and there exists $\alpha<\kappa$, $(f(\bar{w}))^{G\upharpoonright R_{\bar{w}}} = X\cap \kappa(\bar{w})$ for each $\bar{w}\in \MS_G-V_\alpha$.
\end{corollary}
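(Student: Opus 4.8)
The plan is to derive Corollary \ref{RepresentationLemma} from Lemma \ref{RepresentationLemma0} by a density argument inside the generic extension. First I would fix $X \subseteq \kappa$ in $V[G]$ and choose a name $\dot X$ for it, then for each $\alpha < \kappa$ let $\dot x_\alpha$ be the canonical name for $\dot X \cap \alpha$; this gives the sequence of names $\la \dot x_\alpha \subseteq \alpha : \alpha < \kappa \ra$ required as input to Lemma \ref{RepresentationLemma0}. Working in $V$, consider the set of conditions $p$ for which Lemma \ref{RepresentationLemma0} has been applied, i.e.\ there is a function $f = f_p \in V$ witnessing the conclusion of that lemma for $p$. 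By Lemma \ref{RepresentationLemma0} itself, every condition has an extension of this form (the lemma produces $q \le p$ with $q_0 = p_0$ and the desired $f$), so this set is dense in $R_{\bar U}$; hence $G$ meets it, say at $q$ with witness $f = f_q$.

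Now I would unwind what membership of $q$ in $G$ gives us. Since $q \in G$, for every measure sequence $\bar w \in \MS_G$ with $\kappa(\bar w)$ above the critical points occurring in the lower part $q_0$ (there are only finitely many such critical points, so pick $\alpha$ to be the supremum plus one, or more precisely $\alpha = \kappa(\max(q_0))^+ $ or any bound making $V_\alpha$ contain the lower part), the condition $\bar w$ is added by some condition in $G$ that is below $q$; genericity and the structure of Radin forcing then yield that $q \fr \bar w \in G$ as well, or at least that some condition extending $q \fr \bar w$ lies in $G$. Once $q \fr \bar w \in G$, the forcing statement $q \fr \bar w \Vdash f(\bar w) = \dot x_{\kappa(\bar w)}$ from Lemma \ref{RepresentationLemma0} transfers to truth in $V[G]$: $(f(\bar w))^{G \restriction R_{\bar w}} = (\dot x_{\kappa(\bar w)})^G = X \cap \kappa(\bar w)$. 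Here I would use that $f(\bar w)$ is an $R_{\bar w}$-name, so its evaluation depends only on $G \restriction R_{\bar w}$, matching the statement of the corollary, and that $A^q \in \bigcap \bar U$ means $\MS_G \setminus V_\alpha \subseteq A^q = \mathrm{dom}(f) \cap (\text{relevant part})$ after shrinking, so $f$ is defined on all the relevant $\bar w$.

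The main obstacle, and the only genuinely non-routine point, is the step asserting that $\bar w \in \MS_G$ with $\kappa(\bar w)$ large enough forces $q \fr \bar w$ (or a condition extending it compatible with the claimed forcing relation) into $G$. This is a standard feature of Radin forcing — any measure sequence appearing in the generic "object" at a point above the stem of a condition $q \in G$ was put there by a condition refining $q$, and by the compatibility/amalgamation structure of $R_{\bar U}$ one can arrange that the relevant amalgamated condition $q \fr \bar w$ belongs to $G$ — but it relies on the detailed combinatorics of the forcing (the Prikry-type factorization $R_{\bar U}/(p \fr \bar w) \simeq R_{\bar w}/\cdots \times R_{\bar U}/\cdots$ already invoked in the proof of Lemma \ref{RepresentationLemma0}). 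I would handle it by citing \cite{MR3960897} or \cite{MR2768695} for the precise statement that for $q \in G$ and $\bar w \in \MS_G$ with $\kappa(\bar w) > \kappa(\max(q_0))$, one has $q \fr \bar w \in G$, and then the remainder is bookkeeping: set $\alpha = \sup\{\kappa(\bar u) + 1 : \bar u \text{ in } q_0\}$ and conclude for all $\bar w \in \MS_G \setminus V_\alpha$.
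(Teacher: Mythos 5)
Your proposal is correct and is exactly the argument the paper intends: the paper states the corollary as "an immediate consequence" of Lemma \ref{RepresentationLemma0} with no written proof, and the intended derivation is precisely your density argument (apply the lemma to names for $X\cap\alpha$, meet the dense set at some $q\in G$ with witness $f$, and use the standard Radin-forcing fact that $\bar{w}\in\MS_G$ above the stem of $q$ lies in $A^q$ with $q\fr\bar{w}\in G$). The only point worth noting is that the fact you flag as "genuinely non-routine" is indeed standard and routinely cited, so your proof is complete.
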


We finish this section with stating the following theorem.

\begin{theorem}[\cite{CummingsWoodinbook}]\label{approximation}
Let $G \subseteq R_{\bar{U}}$ be a $V$-generic filter. 
If $\kappa$ is regular in $V[G]$, then in $V[G]$, for every subset $X \subseteq \kappa$, 
if $X \cap \alpha \in V$ for all $\alpha < \kappa$ then $X \in V$. 
\end{theorem}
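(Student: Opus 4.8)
The plan is to reduce the theorem to the property that $R_{\bar U}$ adds no new cofinal branch to a $\kappa$-tree lying in $V$, and then to establish that property from the standard factorization of Radin forcing below a condition. By genericity it suffices to show that whenever $p\in R_{\bar U}$ forces that $\dot Z\subseteq\kappa$ and $\dot Z\cap\alpha\in\check V$ for every $\alpha<\kappa$, there are $q\le p$ and $Z^{*}\in V$ with $q\Vdash\dot Z=\check Z^{*}$. Two easy remarks: $\kappa=\crit(j_{\bar U})$ is inaccessible in $V$, so $|\mathcal P(\gamma)\cap V|<\kappa$ for all $\gamma<\kappa$; and, since by the standing convention $\bar U$ has no repeat point, the assumption that $\kappa$ is regular in $V[G]$ forces $cf(lh(\bar U))\ge\kappa^{+}$ (otherwise $\kappa$ would be singular in $V[G]$). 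Now work in $V$ and build the tree $T$ of ground-model approximations of $\dot Z$: place $Y$ at level $\gamma<\kappa$ of $T$ precisely when $Y\in\mathcal P(\gamma)\cap V$ and some $r\le p$ forces $\dot Z\cap\gamma=\check Y$, ordered by end-extension. By the approximation hypothesis every level of $T$ is nonempty, and level $\gamma$ has size $\le|\mathcal P(\gamma)\cap V|<\kappa$, so $T$ is a $\kappa$-tree. In any $V[G]$ with $p\in G$, the sequence $b_{G}:=\langle\dot Z^{G}\cap\gamma:\gamma<\kappa\rangle$ is a cofinal branch of $T$ meeting every level, and $\dot Z^{G}=\bigcup b_{G}$; hence $\dot Z^{G}\in V$ iff $b_{G}\in V$. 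The theorem therefore reduces to the \emph{Branch Lemma}: if $\kappa$ is regular in $V[G]$, then $R_{\bar U}$ adds no new cofinal branch to any $\kappa$-tree of $V$.

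To prove the Branch Lemma, fix a $\kappa$-tree $T\in V$ and a condition $p=p_{0}^{\frown}(\bar U,A)$, and suppose for contradiction that $p$ forces $\dot b$ to be a new cofinal branch (meeting every level) of $\check T$. Invoke the standard factorization of Radin forcing below a condition (see \cite{MR2768695,MR3960897}): $R_{\bar U}/p\cong\mathbb S\times\mathbb M$, where $\mathbb S$ is the finite product of the Radin forcings attached to the measure sequences occurring in $p_{0}$, and $\mathbb M=R_{\bar U}/(\bar U,A)$. Each factor of $\mathbb S$ attached to a measure sequence $\bar\mu$ has size $\le 2^{\kappa(\bar\mu)}<\kappa$ by inaccessibility, so $|\mathbb S|<\kappa$; and $\mathbb M$ is $\kappa^{+}$-cc (since $\kappa$ is inaccessible in $V$ there are only $\kappa$ finite sequences of measure sequences with critical points $<\kappa$, and two conditions carrying the same such sequence are compatible), has the Prikry property, and its direct extension ordering $\le^{*}$ is $\kappa$-closed because $\bigcap\bar U$ is $\kappa$-complete. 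First, a forcing of size $<\kappa=cf(\kappa)$ adds no new cofinal branch to a $\kappa$-tree: if $\dot c$ were an $\mathbb S$-name forced to be one, then for each $s\in\mathbb S$ the chain $c_{s}:=\{x:s\Vdash x\in\dot c\}$ cannot be cofinal (else its downward closure, a ground-model cofinal branch, would be forced by $s$ to equal $\dot c$), so $\sup_{s\in\mathbb S}\operatorname{ht}(c_{s})<\kappa$, contradicting that $\bigcup_{s\in H}c_{s}=\dot c^{H}$ is cofinal. Passing to $W=V[\mathbb S]$ — where $T$ is still a $\kappa$-tree and $\mathbb M$ retains the above three properties — it remains to rule out that $\mathbb M$ adds a new cofinal branch to a $\kappa$-tree over $W$.

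This last point is the crux and the main obstacle. The argument should run as follows: by the Prikry property, for every $\gamma<\kappa$ the choice of the level-$\gamma$ node on $\dot b$ is made by a direct extension; the $\kappa$-closure of $\le^{*}$ together with the closure of $\bigcap\bar U$ under the diagonal intersections used in the proof of Lemma \ref{RepresentationLemma0} lets one assemble these decisions coherently; and then, using the $\kappa^{+}$-cc and analysing the generic filter — tracking, for each $\gamma$, which condition (equivalently, which finite stem of measure sequences) in the generic decides $\dot b\restriction\gamma$, and observing that once $\gamma$ is below the critical point of some measure sequence already appearing in a condition of the generic this decision is subsumed by small (size $<\kappa$) forcing — one concludes that $\dot b$ is forced to coincide with a branch of $T$ lying in $W$, contradicting that $\dot b$ is new. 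The delicate feature is precisely that $\le^{*}$ is only $\kappa$-closed, not $\kappa^{+}$-closed, so the $\kappa$ many level-by-level Prikry decisions cannot be combined into a single direct extension; this is what forces the analysis of the generic filter rather than a straightforward fusion. All remaining steps — the factorization, the stated properties of $\mathbb S$ and $\mathbb M$, and the reduction from $\dot Z$ to $T$ — are routine bookkeeping.
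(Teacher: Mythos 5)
The paper does not actually reprove this theorem; it cites it to Cummings--Woodin and to \cite{MR3960897}, so there is no in-paper proof to compare against. Judged on its own terms, your proposal contains a genuine gap in exactly the step you flag as ``the crux.''

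The bookkeeping up to that point is sound: the tree $T$ of $V$-approximations is a $\kappa$-tree (using that $\kappa$ is inaccessible in $V$); $X\in V$ reduces to the branch $b_G$ of $T$ being in $V$; the factorization $R_{\bar U}/p\cong\mathbb S\times\mathbb M$ with $|\mathbb S|<\kappa$ is standard; and your argument that a forcing of size $<\operatorname{cf}(\kappa)$ adds no new cofinal branch to a $\kappa$-tree is correct. The problem is the last and essential step: you do not prove that $\mathbb M$ adds no new cofinal branch to $\kappa$-trees of $W$.

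Your sketch of that step rests on the observation that, for a fixed $\gamma$, once some $\bar w$ with $\kappa(\bar w)>\gamma$ lies in the generic, the value of $\dot b\restriction\gamma$ is determined by the ``small'' forcing $R_{\bar w}$, since the part of $\mathbb M$ above $\bar w$ is $\kappa(\bar w)^{+}$-weakly closed with the Prikry property and therefore decides bounded sets by direct extension. That observation is correct, but it does \emph{not} yield $\dot b\in W$. The small forcings $R_{\bar w}$ in question vary with $\gamma$ and are determined \emph{by the generic itself}, not fixed in advance: as the Radin generic evolves it keeps producing new lower parts $R_{\bar w}$ whose generics are not controlled by $\mathbb S$. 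Thus each $\dot b\restriction\gamma$ lies in $V[G\restriction R_{\bar w}]$ for some generic-dependent $\bar w$, and these extensions are nested increasing; nothing in your sketch explains why the union of this increasing sequence of ``small'' approximations lands back inside $W$ (equivalently, why the diagonal object is in the ground model). This is precisely the difficulty that the known proofs address by a careful diagonalization/fusion with the representation lemma (reducing $\dot b$ to a ground-model array of $R_{\bar w}$-names and then showing the branch is coherently determined by a single $V$-function), and you explicitly acknowledge you are not carrying that analysis out. As written, ``one concludes that $\dot b$ is forced to coincide with a branch of $T$ lying in $W$'' is a non sequitur, and the proof does not close.
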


For a published account of the proof, the reader can consult \cite{MR3960897}.

\section{Mild large cardinals in Radin extensions}\label{mildlargecardinals}

In this section, we revisit two weakenings of the Repeat Property about the measure sequence $\bar{U}$, as considered in \cite{MR3960897}. In particular, each property corresponds to $\kappa$ being a certain large cardinal in the generic extension by $R_{\bar{U}}$.

\subsection{Almost ineffable cardinals}

\begin{definition}\label{LRP}
$\bar{U}$ satisfies the \emph{Local Repeat Property} (LRP) if for any measure function $b$, there exist $A\in \bigcap \Bar{U}$ and $\gamma<lh(\Bar{U})$ such that $j(b)(\Bar{U}\restriction \gamma)=A$.
\end{definition}
Since $\bar{U}$ does not contain a repeat point, it is not hard to check that if $\bar{U}$ satisfies LRP, then $cf(lh(\bar{U}))>\kappa$. To see this, suppose for the sake of contradiction that $cf(lh(\bar{U}))=\delta\leq \kappa$. Let $\langle \gamma_i: i<\delta\rangle$ be an increasing sequence cofinal in $lh(\bar{U})$. For each $i<\delta$, by the assumption that $\gamma_i$ is not a weak repeat point, there is some $A_i\in U(\gamma_i)-\bigcup_{\alpha<\gamma_i}U(\alpha)$. Define the following measure function $b$: for a measure sequence $\bar{w}$, if there exists some $i<\kappa(\bar{w})$ such that $A^c_i\cap V_{\kappa(\bar{w})}\in \bigcap \bar{w}$, then define $b(\bar{w})=A^c_i\cap V_{\kappa(\bar{w})}$ for the least $i$ as above. If there does not exist such $i$, then $b(\bar{w})=V_{\kappa(\bar{w})}$. By the assumption that $\bar{U}$ satisfies LRP, there exists some $\gamma<lh(\bar{U})$ and $A\in \bigcap \bar{U}$ such that $j(b)(\bar{U}\restriction \gamma)=A$. By the definition of $b$, $j(b)(\bar{U}\restriction \gamma)$ equals $A^c_i$ where $i$ is the least satisfying $\gamma_i\geq \gamma$. However, $A\neq A^c_i$ since $A\in U(\gamma_i)$ while $A^c_i\not\in U(\gamma_i)$, which is a contradiction.

\begin{remark}\label{arbitrarilylarge}
One obtains an equivalent definition by replacing ``...$\gamma<lh(\Bar{U})$ such that...'' with ``...cofinally many $\gamma<lh(\Bar{U})$ such that...'' in Definition \ref{LRP}.
\end{remark}

In \cite{MR3960897}, it was shown that if $lh(\Bar{U})<2^\kappa$, then $\Bar{U}$ does not satisfy LRP, thus making LRP incompatible with Woodin's argument for the failure of $\Diamond(\kappa)$ in $R_{\bar{U}}$-generic extension (see Theorem \ref{diamondfailsWoodin}).  
\cite{MR3960897} asks if LRP is equivalent to a natural large cardinal property of $\kappa$ in $R_{\bar{U}}$ generic extensions. We show next that this is the case. This shows that it is not an accident that LRP is incompatible with the argument for the failure of $\Diamond(\kappa)$.

\begin{definition}[{\cite{JensenKunen}}]
Fix a regular cardinal $\kappa$.
\begin{enumerate}
    \item $\kappa$ is \emph{almost ineffable} if for any sequence $\langle t_\alpha\subset \alpha: \alpha<\kappa\ra$, there is some $t\subset \kappa$ such that $\{\alpha: t\cap \alpha=t_\alpha\}$ is unbounded in $\kappa$.
    \item $\kappa$ is \emph{subtle} if for any sequence $\langle t_\alpha\subset \alpha: \alpha<\kappa\ra$ and any club $C\subset \kappa$, there are $\alpha<\beta\in C$ such that $t_\alpha \sqsubset t_\beta$.
\end{enumerate}
\end{definition}

\begin{remark}\label{clubequivalence}
It is easy to check that $\kappa$ is almost ineffable iff for any sequence $\langle t_\alpha\subset \alpha: \alpha<\kappa\ra$ and any club $C\subset \kappa$, there is some $t\subset \kappa$ such that $\{\alpha\in C: t\cap \alpha=t_\alpha\}$ is unbounded in $\kappa$. As a result, if $\kappa$ is almost ineffable, then $\kappa$ is subtle, which in turn implies $\diamondsuit(\kappa)$ by \cite{JensenKunen}.
\begin{comment}

To see this, we observe that $\kappa$ is almost ineffable iff for any $\langle t_\alpha\subset \alpha: \alpha<\kappa\ra$ and any club $C\subset \kappa$, there is some $t\subset \kappa$ such that $\{\alpha\in C: t\cap \alpha=t_\alpha\}$ is unbounded in $\kappa$. Given $\langle t_\alpha\subset \alpha: \alpha<\kappa\ra$ and a club $C\subset \kappa$ as above (we may assume $t_{\beta+1}=\{\beta\}$ for all $\beta$), for $\alpha\in \lim \kappa$, define \[t'_\alpha =  \begin{cases}
t_\alpha & \text{if }\alpha\in C \\
(\max C\cap \alpha) +1 & \text{otherwise}.
\end{cases}
\]
Apply almost ineffability to $\la t'_\alpha: \alpha<\kappa\ra$ so that we get an averaging set $t$. It is easy to check $\{\alpha\in C: t\cap \alpha =t_\alpha\}$ is unbounded in $\kappa$.
\end{comment}
\end{remark}

\begin{proposition}\label{LRPcharacterization}
$V^{R_{\Bar{U}}} \models  \kappa$ is almost ineffable iff $\Bar{U}$ satisfies \emph{LRP}. 
\end{proposition}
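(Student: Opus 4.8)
The plan is to prove both directions by translating between the combinatorial statement ``$\kappa$ is almost ineffable in $V^{R_{\bar U}}$'' and the measure-sequence statement LRP, using the representation results already available, chiefly Lemma~\ref{RepresentationLemma0}/Corollary~\ref{RepresentationLemma} and Theorems~\ref{tailclub} and~\ref{approximation}. Throughout one should keep in mind that LRP already forces $\operatorname{cf}(lh(\bar U))>\kappa$, so Theorem~\ref{tailclub} (and hence Theorem~\ref{tailclubconverse}) is available, and $\kappa$ is regular in $V[G]$, so the approximation property applies.

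For the direction ``LRP $\Rightarrow$ $\kappa$ almost ineffable'', I would start in $V[G]$ with a sequence $\langle \dot t_\alpha\subset\alpha:\alpha<\kappa\rangle$ and, using Lemma~\ref{RepresentationLemma0}, pass to a condition $q$ and a function $f\in V$ so that for every $\bar w\in A^q$, $f(\bar w)$ is an $R_{\bar w}$-name with $q{}^\frown\bar w\Vdash f(\bar w)=\dot t_{\kappa(\bar w)}$. By shrinking (a density/closure argument using that the direct extension order on the top part is highly closed, exactly as in the proof of Lemma~\ref{RepresentationLemma0}), I would further arrange that each $f(\bar w)$ is decided by the lower part below $\bar w$, so that $f(\bar w)$ effectively names a genuine subset $b(\bar w)\subseteq\kappa(\bar w)$, and that $b(\bar w)\subseteq V_{\kappa(\bar w)}$ is (coded by) an element of $\bigcap\bar w$-measure-one pieces in a coherent way; this makes $\bar w\mapsto b(\bar w)$ into a measure function after identifying subsets of $\kappa(\bar w)$ with subsets of $V_{\kappa(\bar w)}$ via a fixed $V$-coding. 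Apply LRP to get $\gamma<lh(\bar U)$ and $A\in\bigcap\bar U$ with $j(b)(\bar U\restriction\gamma)=A$; decoding, $A$ is (the code of) a subset $t\subseteq\kappa$ in $V$, and by elementarity, for $\bar w$ in a $U(\gamma)$-measure-one set $W$ we have $b(\bar w)=t\cap\kappa(\bar w)$. Since $W\in U(\gamma)$ it is $\bar U$-non-null, hence $O(W\cap\MS_G)$ is unbounded in $\kappa$ by a density argument (each such $\bar w$ can be forced into the generic), and for every $\bar w\in W\cap\MS_G$ sufficiently high in the generic we get $t_{\kappa(\bar w)}=b(\bar w)=t\cap\kappa(\bar w)$, witnessing almost ineffability with the set $t\in V$.

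For the converse ``$\kappa$ almost ineffable in $V^{R_{\bar U}}$ $\Rightarrow$ LRP'', I would argue contrapositively or directly: given a measure function $b\in V$, use $b$ (via $j(b)$ and the usual derived-name machinery) to build in $V$ a sequence of $R_{\bar w}$-names, hence in $V[G]$ a sequence $\langle t_\alpha\subset\alpha:\alpha<\kappa\rangle$ defined on a club $C_G$ of $\alpha=\kappa(\bar w)$ by reading off $b(\bar w)$ along the generic (and defined arbitrarily off $C_G$); more precisely $t_{\kappa(\bar w)}$ codes $b(\bar w)\cap V_{\kappa(\bar w')}$ or just $b(\bar w)$ under the fixed coding. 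By almost ineffability there is $t\subseteq\kappa$ in $V[G]$ with $\{\alpha\in C_G: t\cap\alpha=t_\alpha\}$ unbounded; since $\{\alpha: t\cap\alpha\in V\}$ then contains this unbounded set and, using Theorem~\ref{tailclub} to get a tail-measure-one set generating a club inside it, one can apply Theorem~\ref{approximation} to conclude $t\in V$. Now reflect: the set of $\bar w$ with $b(\bar w)=t\cap V_{\kappa(\bar w)}$ is $\bar U$-non-null (it meets $\MS_G$ on an unbounded set), so it lies in some $U(\gamma)$, and then $j(b)(\bar U\restriction\gamma)=t\cap V_\kappa$ up to the coding, which after decoding gives an $A\in\bigcap\bar U$ with $j(b)(\bar U\restriction\gamma)=A$ — exactly LRP. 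Here one must be slightly careful that ``$\bar U$-non-null'' ($A\in U(\gamma)$ for some single $\gamma$) is precisely what LRP needs, and that the unbounded witnessing set in $\MS_G$ genuinely forces membership in some $U(\gamma)$ rather than merely being $\bar U$-positive; this is where I would invoke that a $\bar U$-null set cannot meet $\MS_G$ on an unbounded set (each $\bar w$ in a $\bar U$-null set has $\bar w\notin\bigcup\bar U$, so it appears in $\MS_G$ only boundedly often — indeed, genericity over the measure-one sets kills it cofinally).

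The main obstacle, in both directions, is the bookkeeping needed to turn the \emph{names} $f(\bar w)$ produced by Lemma~\ref{RepresentationLemma0} into honest-to-goodness \emph{elements} of $V$ that a measure function can take as values: one has to absorb the $R_{\bar w}$-generic information, which is handled by a further shrinking using the closure of the direct-extension order on the part of $R_{\bar U}$ above $\kappa(\bar w)$ (so that the value $t\cap\kappa(\bar w)$ does not depend on the generic) together with the genericity of the lower part, and one must verify these shrinkings are uniform enough to land on a single $\bar U$-measure-one set. A secondary subtlety is the passage between subsets of $\kappa(\bar w)$ and subsets of $V_{\kappa(\bar w)}$ (equivalently, elements of $\bigcap\bar w$): one fixes once and for all a $V$-definable bijection $V_\kappa\leftrightarrow\kappa$ whose restrictions cohere with $j$, so that ``measure function'' and ``$\kappa$-sequence of bounded subsets'' really are interchangeable. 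Once these identifications are set up, each direction is a density/reflection argument of the kind already rehearsed in the proofs of Lemma~\ref{RepresentationLemma0} and the Proposition preceding it.
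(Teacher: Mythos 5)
Your converse direction (almost ineffable in $V^{R_{\bar U}}$ implies LRP) is essentially the paper's argument: read $b$ off along the generic as a $V$-sequence of bounded sets, apply almost ineffability, use freshness together with Theorem \ref{approximation} to pull the guessing set $t$ into $V$, and note that the set of $\bar w$ with $b(\bar w)=^* t\cap V_{\kappa(\bar w)}$ meets $\MS_G$ unboundedly and hence is $\bar U$-non-null, which is exactly what LRP needs (one also has to stabilize, by pressing down, the ordinal below which $\MS_G\cap V_{\kappa(\bar w)}$ may escape $b(\bar w)$; the paper does this by attaching that ordinal to the guessed sets).

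The forward direction, however, has a genuine gap, in two places. First, you propose to shrink so that each name $f(\bar w)$ ``effectively names a genuine subset $b(\bar w)\subseteq\kappa(\bar w)$'', i.e.\ an element of $V$. This cannot be done: $f(\bar w)$ is an $R_{\bar w}$-name whose value genuinely depends on the generic below $\kappa(\bar w)$, and the closure of the direct-extension order only controls the part of the forcing above $\kappa(\bar w)$. Worse, your plan produces a guessing set $t\in V$, which cannot witness almost ineffability for an arbitrary sequence in $V[G]$: if, say, $t_\alpha$ codes $C_G\cap\alpha$, then no $t_\alpha$ is in $V$, whereas every initial segment of a $t\in V$ is in $V$, so $t\cap\alpha=t_\alpha$ never holds. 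Second, even granting such values, $\bar w\mapsto b(\bar w)$ would not be a measure function: LRP quantifies only over functions with $b(\bar w)\in\bigcap\bar w$, and an arbitrary subset of $\kappa(\bar w)$, however it is coded as a subset of $V_{\kappa(\bar w)}$, is not measure one. There is no bijection $V_\kappa\leftrightarrow\kappa$ under which ``measure function'' and ``$\kappa$-sequence of bounded sets'' become interchangeable; the restriction to measure-one values is the whole content of LRP. The paper's proof avoids both problems by applying Lemma \ref{RepresentationLemma0} \emph{twice}: the second application, inside each $R_{\bar w}$, outputs a genuine measure-one set $b(\bar w)\in\bigcap\bar w$ (the set on which the initial segments $\dot t_{\bar w}\cap\kappa(\bar v)$ are represented by names $h_{\bar w}(\bar v)$), and LRP is applied to \emph{that} $b$. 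The single index $\tau$ it returns yields a set $C\in U(\tau)$ of $\bar w$'s whose local assignments $h_{\bar w}$ all cohere with $h=j(\bar h)(\bar U\restriction\tau)$, and the almost-ineffability witness is $X=\bigcup_{\bar w\in C\cap\MS_G}d_{\bar w}$, assembled in $V[G]$ from interpreted names and in general not in $V$. Keeping the data at the level of names and using LRP only on the measure-one sets that organize them is the missing idea.
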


Proposition \ref{LRPcharacterization} follows from the following two lemmas.

\begin{lemma}
If $\bar{U}$ satisfies the LRP, then $\kappa$ is almost ineffable in $V^{R_{\bar{U}}}$
\end{lemma}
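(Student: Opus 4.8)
The plan is to show that in $V^{R_{\bar{U}}}$, every sequence $\la \dot{t}_\alpha \subseteq \alpha : \alpha < \kappa\ra$ admits an averaging set $t \subseteq \kappa$ with $\{\alpha < \kappa : t \cap \alpha = \dot{t}_\alpha\}$ unbounded in $\kappa$. First I would fix a name for such a sequence and apply Corollary \ref{RepresentationLemma} (via Lemma \ref{RepresentationLemma0}): after passing to a dense set of conditions below a given $p$, there is a function $f \in V$ with domain a $\bar{U}$-measure-one set such that for $\bar{w} \in A^p$, $f(\bar{w})$ is an $R_{\bar{w}}$-name for a subset of $\kappa(\bar{w})$ and $q^\frown \bar{w} \Vdash f(\bar{w}) = \dot{t}_{\kappa(\bar{w})}$. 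The idea is then to interpret these names uniformly in the ground model using the embedding $j : V \to M$.

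Next I would turn $f$ into a measure function so that LRP can be invoked. For each $\bar{w} \in \mathrm{dom}(f)$, the name $f(\bar{w})$ is an $R_{\bar{w}}$-name; rather than interpreting it in $V$ (where there is no generic), I would define $b(\bar{w})$ to record, in a $\bar{w}$-measure-one way, the ``coherent'' information: concretely, for $\bar{w}$ in a suitable measure-one set I would want to define $b(\bar{w}) \in \bigcap \bar{w}$ so that the collection of $\bar{v} \in b(\bar{w})$ on which $f$ behaves compatibly with $f(\bar{w})$ (i.e. $f(\bar{v})$ is forced by the appropriate condition to agree with the putative averaging set up to $\kappa(\bar{v})$) is large. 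Applying LRP to this $b$ yields $\gamma < lh(\bar{U})$ and $A \in \bigcap \bar{U}$ with $j(b)(\bar{U}\restriction\gamma) = A$. The point of $j(b)(\bar{U}\restriction\gamma)$ is that it is computed in $M$ from the actual generic-like object $\bar{U}\restriction\gamma$, so it encodes the name $j(f)(\bar{U})$ evaluated ``correctly'', and reflecting this back via the defining property of measure sequences ($A \in w(\tau) \iff \bar{w}\restriction\tau \in j(A)$) should produce a single set $t \subseteq \kappa$ in $V[G]$ — read off as $O$ of an appropriate $\bar{U}$-positive or tail-measure-one set intersected with $\MS_G$, using Corollary \ref{RepresentationLemma} again to see $t$ is the honest union of the $\dot{t}_{\kappa(\bar{w})}$ along a cofinal set of $\bar{w} \in \MS_G$.

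The last step is a density argument showing $\{\alpha : t \cap \alpha = \dot{t}_\alpha\}$ is unbounded: given a condition $p^\frown(\bar{U}, E)$, I would extend into the measure-one set witnessing the LRP coherence, adding some $\bar{w}$ with $\kappa(\bar{w})$ large, so that $p^\frown \bar{w}$ forces $\dot{t}_{\kappa(\bar{w})} = f(\bar{w})^{G\restriction R_{\bar{w}}} = t \cap \kappa(\bar{w})$; since we can do this above any level, the agreement set is forced unbounded. Here Remark \ref{arbitrarilylarge} (cofinally many $\gamma$ work) is useful to absorb arbitrary lower parts.

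The main obstacle I anticipate is the second step: correctly packaging the $R_{\bar{w}}$-names $f(\bar{w})$ into an \emph{honest} measure function $b$ (whose values are genuine measure-one subsets of $V_{\kappa(\bar{w})}$, not names), in such a way that $j(b)(\bar{U}\restriction\gamma) = A$ actually pins down a coherent ground-model-describable set $t$ in $V[G]$. The subtlety is that $f(\bar{w})$ depends on the generic below $\kappa(\bar{w})$, so $b$ must be defined by quantifying over all the relevant lower parts (or by a reflection/Loś-style argument in $M$ comparing $j(f)(\bar{U})$ with $f$), and one must check this $b$ is well-defined on a $\bar{U}$-measure-one set and that LRP applied to it yields exactly the compatibility needed for both the existence of $t$ and the density argument. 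I would handle this by working in $M$ with $j(f)$, using that $\bar{U}\restriction\gamma \in j(A)$ translates the statement ``$f(\bar{w})$ is forced to equal $t \cap \kappa(\bar{w})$'' into a measure-one statement, and then transferring back.
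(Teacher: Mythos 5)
Your outline follows the paper's strategy (representation lemma, convert the name-system into a measure function, apply LRP, take a union along the generic), but the step you yourself flag as ``the main obstacle'' is exactly the heart of the proof, and your sketch of it does not work as stated: you propose to define $b(\bar{w})$ as the set of $\bar{v}$ on which $f(\bar{v})$ is forced to ``agree with the putative averaging set up to $\kappa(\bar{v})$,'' but the averaging set does not exist until after LRP has been applied to $b$, so this definition is circular. Moreover, the measure-one set $A=j(b)(\bar{U}\restriction\gamma)$ is by itself just a set of measure sequences; it cannot ``pin down'' a subset of $\kappa$ such as an arbitrary target $\dot t_\alpha$-sequence (your fallback of reading $t$ off as $O(\,\cdot\,\cap\MS_G)$ cannot be right for arbitrary sequences $\la t_\alpha\ra$), and the later claim that the interpretations $f(\bar{w})^{G\restriction R_{\bar{w}}}$ cohere along a cofinal set is precisely what has to be proved, not assumed.

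The paper resolves this by applying Lemma \ref{RepresentationLemma0} a \emph{second} time, inside each $R_{\bar{w}}$ below the condition $p_0\fr(\bar{w},A\cap V_{\kappa(\bar{w})})$, to the single name $\dot t_{\bar{w}}$ (the $R_{\bar{w}}$-name for $\dot s_{\kappa(\bar{w})}$). This produces, in $V$, both a genuine measure-one set $b(\bar{w})\in\bigcap\bar{w}$ and an auxiliary system $h_{\bar{w}}$ assigning to each $\bar{v}\in b(\bar{w})$ an $R_{\bar{v}}$-name $h_{\bar{w}}(\bar{v})$ forced to equal $\dot t_{\bar{w}}\cap\kappa(\bar{v})$. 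LRP is then applied to $b$, yielding $\tau$ with $B=j(b)(\bar{U}\restriction\tau)\in\bigcap\bar{U}$, but crucially one also evaluates $h=j(\la h_{\bar{w}}\ra)(\bar{U}\restriction\tau)$ and passes to the $U(\tau)$-large set $C$ of $\bar{w}\in B$ with $B\cap V_{\kappa(\bar{w})}\subset b(\bar{w})$ and $h\restriction V_{\kappa(\bar{w})}=h_{\bar{w}}$. It is this simultaneous stabilization of $b$ and of the name-system $\bar h$ that gives coherence: for $\bar{w}<\bar{w}'$ in $C\cap\MS_G$ the unions $d_{\bar{w}}=\bigcup_{\bar{v}\in\MS_G\cap V_{\kappa(\bar{w})}}(h(\bar{v}))^G$ satisfy $d_{\bar{w}}=t_{\bar{w}}$ and $d_{\bar{w}}\sqsubset d_{\bar{w}'}$, so $X=\bigcup_{\bar{w}\in C\cap\MS_G}d_{\bar{w}}$ guesses the sequence on the unbounded set $O(C\cap\MS_G)$ with no further density argument needed. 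Without the auxiliary $h_{\bar{w}}$'s (or an equivalent device), your proposal has no mechanism to produce the set $t$ or to verify coherence, so as written it has a genuine gap.
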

\begin{proof}
Let $p'\in R_{\bar{U}}$ and a name $\la \dot{s}_\alpha\subset \alpha: \alpha<\kappa\ra$ be given. By Lemma \ref{RepresentationLemma0}, we can get $p=p_0^\frown (\bar{U}, A) \leq^* p'$ and $\langle \dot{t}_{\bar{w}}: \bar{w}\in \mathcal{MS}\rangle$ such that $\dot{t}_{\bar{w}}$ is an $R_{\bar{w}}$-name and for any $\bar{w}\in A$, $p\fr \bar{w}\Vdash \dot{t}_{\bar{w}} = \dot{s}_{\kappa(\bar{w})}$. 

For each $\bar{w}\in A$, apply Lemma \ref{RepresentationLemma0} to $p_0\fr (\bar{w}, A\cap V_{\kappa(\bar{w})})$ and $R_{\bar{w}}$. As a result, we can find $b(\bar{w})\in \bigcap \bar{w}$, $b(\bar{w}) \subset A$, such that for any $\bar{v}\in b(\bar{w})$, it is the case that $p_0^\frown (\bar{v}, b(\bar{w})\cap V_{\kappa(\bar{v})})^\frown (\bar{w}, b(\bar{w}))\Vdash_{R_{\bar{w}}} \dot{t}_{\bar{w}}\cap \kappa(\bar{v})=h_{\bar{w}}(\bar{v})$, where $h_{\bar{w}}$ is a function defined on $\mathcal{MS}\cap V_{\kappa(\bar{w})}$ with $h_{\bar{w}}(\bar{v})$ being a $R_{\bar{v}}$-name for each $\bar{v}\in \dom (h_{\bar{w}})$. Let $\bar{h}$ denote $\langle h_{\bar{w}}: \bar{w}\in \mathcal{MS}\cap V_\kappa \ra$.

By the hypothesis that $\bar{U}$ satisfies LRP, we can find $\tau<lh(\bar{U})$ such that $B=j(b)(\bar{U}\restriction \tau)\in \bigcap \bar{U}$. Note $B\subset A$. We may assume $\tau\geq 2$ by Remark \ref{arbitrarilylarge}. Let $h=j(\bar{h})(\bar{U}\restriction \tau)$, then $h$ is a function from $\mathcal{MS}\cap V_\kappa$ to $V_\kappa$, taking $\bar{v}$ to $h(\bar{v})$, which is an $R_{\bar{v}}$-name for a subset of $\kappa(\bar{v})$.

Consider the set $C=\{\bar{w}\in B: lh(\bar{w})>1,B\cap V_{\kappa(\bar{w})}\subset b(\bar{w}), h\restriction V_{\kappa(\bar{w})}=h_{\bar{w}}\}\in U(\tau)$. Let $G$ be a generic filter for $R_{\bar{U}}$ over $V$ containing $p_0^\frown (\bar{U}, B)$ and in $V[G]$, we define a sequence of bounded subsets of $\kappa$ as follows: for each $\bar{w}\in C\cap \MS_G$, let $d_{\bar{w}}$ be the union of the interpretation $(h(\bar{v}))^G$ for all $\bar{v}\in \MS_G\cap V_{\kappa(\bar{w})}$.

\begin{claim}
For any $\bar{w}\in C\cap \MS_G$, $d_{\bar{w}}$ is well-defined, and it is equal to $t_{\bar{w}}=_{def}(\dot{t}_{\bar{w}})^G$.
\end{claim}

\begin{proof}[Proof of the Claim]
For any $\bar{v}<\bar{v}'$ both in $\MS_G\cap V_{\kappa(\bar{w})} \cap B - V_{rank(p_0)+1}$, we note that $$(h_{\bar{w}}(\bar{v}))^{G} , (h_{\bar{w}}(\bar{v}'))^{G} \sqsubset t_{\bar{w}}.$$ This is because the statement is forced by $p_0^\frown (\bar{v}, b(\bar{w})\cap V_{\kappa(\bar{v})})^\frown (\bar{w}, b(\bar{w}))$ (respectively, by $p_0^\frown (\bar{v}', b(\bar{w})\cap V_{\kappa(\bar{v}')})^\frown (\bar{w}, b(\bar{w}))$), extended by $p_0^\frown (\bar{v}, B\cap V_{\kappa(\bar{v})})^\frown (\bar{w}, B\cap V_{\kappa(\bar{w})})$ by the fact that $\bar{w}\in C$.
\end{proof}

\begin{claim}
For $\bar{w}<\bar{w}' \in \MS_G\cap C$, it is the case that $d_{\bar{w}}\sqsubset d_{\bar{w}'}$.
\end{claim}

\begin{proof}[Proof of the Claim]
Reason as above to check that for any $\bar{v}\in \MS_G\cap V_{\kappa(\bar{w})} \cap B - V_{rank(p_0)+1}$, $(h(\bar{v}))^G=(h_{\bar{w}}(\bar{v}))^G=(h_{\bar{w}'}(\bar{v}))^G$ is an initial segment of both $d_{\bar{w}}$ and $d_{\bar{w}'}$.
\end{proof}

Let $X=\bigcup_{\bar{w}\in \MS_G\cap C} d_{\bar{w}}$. Then we are done. Namely, in $V[G]$, $\{\alpha<\kappa: X\cap \alpha = (\dot{s}_\alpha)^G\}$ is unbounded in $\kappa$.

\end{proof}

\begin{lemma}
If $\kappa$ is almost ineffable in $V^{R_{\bar{U}}}$, then $\bar{U}$ satisfies LRP.
\end{lemma}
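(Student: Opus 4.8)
The plan is to run the previous lemma in reverse: fixing a measure function $b$, I will use almost ineffability in $V^{R_{\bar U}}$ to manufacture a $\gamma<lh(\bar U)$ with $j(b)(\bar U\restriction\gamma)\in\bigcap\bar U$, which is exactly what LRP demands. First I would fix in $V$ a $\subseteq$-coherent family of bijections $\langle e_\alpha:V_\alpha\to\alpha \mid \alpha\le\kappa,\ \alpha\text{ inaccessible}\rangle$; coherence guarantees $j(\langle e_\alpha\rangle)$ agrees with $e_\kappa$ at $\kappa$. Using $b$ I then write an $R_{\bar U}$-name for a sequence $\langle\dot s_\alpha\subseteq\alpha \mid \alpha<\kappa\rangle$ that records $b$ along the Radin club: when $\alpha=\kappa(\bar w)$ for the unique $\bar w\in\MS_G$ at level $\alpha$, $\dot s_\alpha$ is the $e_\alpha$-code of the pair consisting of the local history $\langle b(\bar v)\mid\bar v\in\MS_G\cap V_{\kappa(\bar w)}\rangle$ (arranged so the codes nest as $\bar w$ grows) together with the value $b(\bar w)$; and $\dot s_\alpha=\emptyset$ for $\alpha\notin C_G$. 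Note the coded objects have rank $<\kappa(\bar w)$, so this is a legitimate subset of $\kappa(\bar w)$. The point of recording the history, rather than just $b(\bar w)$, is that a single guessing subset of $\kappa$ must then reconstruct the whole function $\bar v\mapsto b(\bar v)$ along $\MS_G$ — a genuinely nontrivial demand, unlike guessing the ground-model set $b(\bar w)$ level by level.

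Second, I would apply almost ineffability in the club form of Remark~\ref{clubequivalence}, with the club $C_G$, to obtain $X\subseteq\kappa$ in $V[G]$ such that $W:=\{\bar w\in\MS_G \mid X\cap\kappa(\bar w)=(\dot s_{\kappa(\bar w)})^G\}$ is unbounded in $\MS_G$. By Corollary~\ref{RepresentationLemma} fix $f\in V$ with $(f(\bar w))^{G\restriction R_{\bar w}}=X\cap\kappa(\bar w)$ for all $\bar w\in\MS_G$ above some level; then, past that level, ``$\bar w\in W$'' becomes a property $\psi(\bar w)$ of $G\restriction R_{\bar w}$ together with the ground-model data $f,b$. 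Now a density argument: some $p=p_0\fr(\bar U,A^p)\in G$ forces $\{\bar w\in\MS_G\mid\psi(\bar w)\}$ unbounded, and I claim the set
$E:=\{\bar w\in A^p \mid \exists\, t\le_{R_{\bar w}} p_0\fr(\bar w,A^p\cap V_{\kappa(\bar w)}),\ t\Vdash_{R_{\bar w}}\psi(\bar w)\}$
is $\bar U$-non-null; otherwise $V_\kappa\setminus E\in\bigcap\bar U$, so $p_0\fr(\bar U,A^p\cap(V_\kappa\setminus E))$ forces $\psi$ to fail at every $\bar w\in\MS_G$, contradicting unboundedness. (This uses the standard fact that the trace of a $V$-set on $\MS_G$ is unbounded exactly when the set is $\bar U$-non-null, which underlies Theorem~\ref{tailclubconverse}.) Hence $E\in U(\gamma)$ for some $\gamma<lh(\bar U)$, i.e.\ $\bar U\restriction\gamma\in j(E)$. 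By Remark~\ref{arbitrarilylarge}-style flexibility one may also arrange $\gamma$ to be large.

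Third comes the reflection. Unravelling $\bar U\restriction\gamma\in j(E)$ inside $M$ yields a condition in the Radin forcing $R_{\bar U\restriction\gamma}$ (computed in $M$) forcing that the interpretation of $j(f)(\bar U\restriction\gamma)$ codes the history of $j(b)$ along the generic with head value $j(b)(\bar U\restriction\gamma)$. Since $\kappa$ is regular in $V[G]$ (almost ineffable), the singularization remark together with the standing assumption that $\bar U$ has no repeat point gives $cf(lh(\bar U))>\kappa$, so $j$ lifts to $j^+:V[G]\to M[G^*]$ with $j[G]\subseteq G^*$; moreover $\bar U\in\MS_{G^*}$ (it sits at level $\kappa$ of the lifted club). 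Using $j^+(X)\cap\kappa=X$, which is coded by $j(f)$ at $\bar U$ in $G^*$, one compares this with the reflected statement and reads off the value $j(b)(\bar U\restriction\gamma)$ as a set determined by $X$ and the decoding maps.

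The main obstacle — and the technical heart of the lemma — is the final implication: upgrading ``$j(b)(\bar U\restriction\gamma)$ is so determined, and agrees with $b(\bar w)$ on the $\bar U$-non-null set $E\subseteq\{\bar w\mid j(b)(\bar U\restriction\gamma)\cap V_{\kappa(\bar w)}=b(\bar w)\}\in U(\gamma)$'' to the conclusion ``$j(b)(\bar U\restriction\gamma)\in\bigcap\bar U$''. A priori one only obtains membership in $\bigcap_{i<\gamma}U(i)$, which is automatic for a measure function and carries no content; indeed, for the naive coding by $b(\bar w)$ alone one never gets more, which is why the history must be coded. The intended payoff of the history-coding is that a correct guess, once reflected through $j$ and viewed in $M[G^*]$ at $\bar U$, forces $j(b)(\bar U\restriction\gamma)=j(b)(\bar U\restriction i)$ for every $i\in[\gamma,lh(\bar U))$ — equivalently that the displayed set is in fact $\bigcap\bar U$-measure-one. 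Making the encoding and the genericity/reflection bookkeeping precise enough for this step to go through (in particular, tracking which initial segments of $\bar U$ enter $\MS_{G^*}$ and how the condition extracted from $j(E)$ sits relative to $G^*$, and using the freedom to push $\gamma$ cofinally toward $lh(\bar U)$) is where the real effort is required.
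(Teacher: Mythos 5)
There is a genuine gap, and it sits exactly where you write that ``the real effort is required.'' Your first two steps are essentially the paper's: record $b$ along $\MS_G$ as a sequence of sets lying in $V$, apply almost ineffability relativized to the club $C_G$, use Theorem \ref{approximation} to conclude that the guessed set $X$ (being fresh) lies in $V$, and run a density/trace argument to see that $\{\bar{w}: X\cap V_{\kappa(\bar{w})}=^* b(\bar{w})\}$ is $\bar{U}$-non-null, whence $j(b)(\bar{U}\restriction\tau)=^* X$ for some $\tau$. You also correctly diagnose that this alone does not give $X\in\bigcap\bar{U}$, which is what LRP actually requires. But your proposed repair does not close the gap. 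First, the lifting $j^+:V[G]\to M[G^*]$ with $j[G]\subseteq G^*$ and $\bar{U}\in\MS_{G^*}$ is asserted rather than constructed; the reason you offer ($cf(lh(\bar{U}))>\kappa$) has no bearing on producing an $M$-generic filter for $j(R_{\bar{U}})$, and any such lifting amenable to $V[G]$ would make $\kappa$ measurable in $V[G]$, which by the Mitchell/Cummings--Woodin characterization is exactly what the standing no-repeat-point assumption rules out. Second, even granting the lifting, the decisive implication --- that a correct guess of the coded ``history'' forces $j(b)(\bar{U}\restriction\gamma)\in U(i)$ for all $i\geq\gamma$ --- is never argued; you explicitly defer it. So the proof is incomplete at its crux, and the machinery you reach for is the wrong tool.

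The paper closes the gap with a much lighter device, entirely on the $V[G]$ side and with no reflection into $M$. Instead of coding $b(\bar{w})$ or its history, code
$X_{\bar{w}}=\{\gamma_{\bar{w}}\}\cup\bigl(b(\bar{w})- V_{\gamma_{\bar{w}}+1}\bigr)$,
where $\gamma_{\bar{w}}<\kappa(\bar{w})$ is chosen so that $\MS_G\cap V_{\kappa(\bar{w})}- V_{\gamma_{\bar{w}}}\subseteq b(\bar{w})$ (possible by genericity, since $b(\bar{w})\in\bigcap\bar{w}$). Each $X_{\bar{w}}$ is still equal to $b(\bar{w})$ up to a bounded set, but now it also contains a tail of $\MS_G\cap V_{\kappa(\bar{w})}$. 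After guessing, one stabilizes $\gamma_{\bar{w}}=\gamma$ on an unbounded set of correctly guessed $\bar{w}$, so the single set $X$ contains a tail of $\MS_G$; hence $X^c$ meets $\MS_G$ boundedly, is therefore not $\bar{U}$-non-null, and $X\in\bigcap\bar{U}$. Combined with $j(b)(\bar{U}\restriction\tau)=^* X$ from the non-null trace argument, this yields LRP. The moral is that the extra information you need is not the history of $b$ along the generic but merely that each coded set almost contains the generic sequence below its level.
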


\begin{proof}
Fix a measure function $b$. Let $G\subset R_{\bar{U}}$ be generic over $V$. In $V[G]$, for each $\bar{w}\in G$, let $\gamma_{\bar{w}}<\kappa(\bar{w})$ be some ordinal such that $\MS_G-V_{\gamma_{\bar{w}}} \subset b(\bar{w})$. Consider the sequence $\bar{X}=\la X_{\bar{w}}=_{def}\{\gamma_{\bar{w}}\}\cup (b(\bar{w})-V_{\gamma_{\bar{w}}+1}): \bar{w}\in \MS_G\ra$. Note that $\bar{X}\subset V$. As $\kappa$ is almost ineffable, by Remark \ref{clubequivalence}, we can find $X$ such that $\{\bar{w}\in \MS_G: X\cap V_{\kappa(\bar{w})}=X_{\bar{w}}\}$ has size $\kappa$. In particular, $X$ is fresh. By Theorem \ref{approximation}, we know that $X\in V$. As a result, there is some $\gamma$, such that there are unboundedly mamy $\bar{w}$ with $\gamma_{\bar{w}}=\gamma$ and $X\cap V_{\kappa(\bar{w})}=\{\gamma_{\bar{w}}\}\cup (b(\bar{w})-V_{\gamma_{\bar{w}}+1})$. As a result, $X$ contains a  tail of $\MS_G$, which in turn implies $X\in \bigcap \bar{U}$.

Finally, we need to check that there is some $\tau<lh(\bar{U})$, such that $j(b)(\bar{U}\restriction \tau) =^* X$. More precisely, we mean here that the symmetric difference of two sets $j(b)(\bar{U}\restriction \tau)$ and $X$ is a subset of $V_\alpha$ for some $\alpha<\kappa$. This is as desired, since $X\in \bigcap \bar{U}$ implies that $j(b)(\bar{U}\restriction \gamma)\in \bigcap \bar{U}$.
But this immediately follows from the fact that in $V[G]$, $\{\kappa(\bar{w}): \bar{w}\in \MS_G \text{ and } X\cap V_{\kappa(\bar{w})}=^* b(\bar{w})\}$ is unbounded in $\kappa$.
\end{proof}

\subsection{Weakly compact cardinals}

\begin{definition}[\cite{MR3960897}, Definition 21]
We say $\Bar{U}$ satisfies the \emph{Weak Repeat Property} (WRP) if every measure function $b$ has a \emph{repeat filter} $W$ with respect to $\bar{U}$ in the following sense: 
\begin{enumerate}
    \item $W$ is a $\kappa$-complete filter extending the co-bounded filter on $\MS$, 
    \item any $X\in W$ is $\bar{U}$-non-null,
    \item $W$ \emph{measures} $b$, namely for each $\bar{u}$, if we let $X_{b, \bar{u}}=_{def} \{\bar{v}: \bar{u}\in b(\bar{v})\}$, then either $X_{b, \bar{u}}$ or $X_{b, \bar{u}}^c \in W$ and
    \item $[b]_{W} =_{def} \{\bar{u}: X_{b,\bar{u}}\in W\} \in \bigcap \bar{U}$.
\end{enumerate}
\end{definition}

In \cite{MR3960897}, it was shown that $\kappa$ is weakly compact in $V^{R_{\bar{U}}}$ iff $\bar{U}$ satisfies WRP. We supply more equivalent characterizations of weak compactness in Radin extensions, which will be useful later on.

\begin{proposition}\label{characterization}
The following are equivalent:
\begin{enumerate}
    \item $\Bar{U}$ satisfies WRP.
    \item $V^{R_{\Bar{U}}} \models \kappa$ is weakly compact.
    \item $V^{R_{\Bar{U}}} \models \kappa$ is inaccessible and the $V$-tree property holds. Namely, any $\kappa$-tree $T$, satisfying that $T\subset V$, which means exactly the following:
        \begin{itemize}
            \item the underlying set is a set of sequences, each of which is in $V$ and 
            \item is ordered by end-extension,
        \end{itemize}
        admits a cofinal branch. 
    \item for any measure function $b$, there exists $A\in \bigcap \Bar{U}$ such that for any $\alpha<\kappa$, there exists some $\beta<lh(\Bar{U})$, with $A\cap V_\alpha \subset j(b)(\Bar{U}\restriction \beta)$.
   
\end{enumerate}

\end{proposition}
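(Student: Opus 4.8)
The plan is to establish a cycle of implications $(1)\Rightarrow(2)\Rightarrow(3)\Rightarrow(4)\Rightarrow(1)$, leaning on the fact (cited from \cite{MR3960897}) that $(1)\Leftrightarrow(2)$ is already known, so the real content is connecting these to the $V$-tree property formulation $(3)$ and the ``approximating from below'' reformulation $(4)$ of WRP.

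\begin{proof}[Proof sketch]
We already have $(1)\Leftrightarrow(2)$ from \cite{MR3960897}, and $(2)\Rightarrow(3)$ is immediate since weak compactness of $\kappa$ gives the full tree property, in particular the $V$-tree property, and inaccessibility. So it suffices to prove $(3)\Rightarrow(4)$ and $(4)\Rightarrow(1)$.

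For $(3)\Rightarrow(4)$, fix a measure function $b$ in $V$; we want $A\in\bigcap\bar U$ such that for every $\alpha<\kappa$ there is $\beta<lh(\bar U)$ with $A\cap V_\alpha\subseteq j(b)(\bar U\restriction\beta)$. The idea is to build, in a generic extension $V[G]$ with $G\subseteq R_{\bar U}$ generic, a $\kappa$-tree $T$ whose nodes are in $V$ and which is designed so that a cofinal branch through $T$ decodes exactly such an $A$. Concretely, using Corollary \ref{RepresentationLemma} / Lemma \ref{RepresentationLemma0} to represent the relevant sets, one arranges the levels of $T$ to consist of ``candidate approximations'' to the desired $A$, indexed along $C_G$ (which contains a club by Theorem \ref{tailclubconverse}): the node at a stage corresponding to $\bar w\in\MS_G$ records a set $Y$ with $Y\subseteq b(\bar v)$ for all sufficiently large $\bar v\in\MS_G\cap V_{\kappa(\bar w)}$, ordered by end-extension. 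Genericity guarantees the tree has height $\kappa$ and $V$-nodes; by $(3)$ it has a cofinal branch $B$, whose union $A$ is forced to be a subset of $\kappa$ all of whose initial segments lie in $V$, hence $A\in V$ by Theorem \ref{approximation}. Back in $V$, unbounded-in-$\kappa$ many $\bar w$ have $A\cap V_{\kappa(\bar w)}=^* b(\bar w)$, and reflecting this through $j$ (as in the proof of the second lemma of the previous subsection) yields that for each $\alpha$ there is $\beta$ with $A\cap V_\alpha\subseteq j(b)(\bar U\restriction\beta)$; a final cleanup ensures $A\in\bigcap\bar U$. The main obstacle here is setting up the tree so that (a) its nodes genuinely lie in $V$, (b) it has height exactly $\kappa$ rather than stalling, and (c) the branch actually delivers the ``$\subseteq$'' approximation property rather than mere equality on a thin set — this is where the distinction between WRP and the stronger LRP becomes visible, and one must be careful that the tree only demands the one-sided containment.

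For $(4)\Rightarrow(1)$, given a measure function $b$ we must produce a repeat filter $W$ for $b$. Using $(4)$, fix $A\in\bigcap\bar U$ with the stated approximation property. Define $W$ on $\MS$ by declaring $Z\in W$ iff $Z\supseteq A\cap V_\alpha$ for some $\alpha<\kappa$ modulo the co-bounded filter — i.e., $W$ is the filter generated by the co-bounded filter together with $\{A\setminus V_\alpha:\alpha<\kappa\}$; one checks it is $\kappa$-complete and every element is $\bar U$-non-null because such a set contains a tail of a measure-one set (being co-something inside $A\in\bigcap\bar U$). That $W$ measures $b$ and that $[b]_W\in\bigcap\bar U$ follow by translating the approximation property: for each $\bar u$, either $\bar u$ eventually lies in $b(\bar v)$ along $A$ (so $X_{b,\bar u}\in W$) or it does not, and the set of $\bar u$ of the first kind is exactly captured, up to a bounded error, by $A$ itself because $A\cap V_\alpha\subseteq j(b)(\bar U\restriction\beta)$ unwinds to ``$\bar u\in b(\bar v)$ for cofinally/eventually many $\bar v$''. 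The delicate point is verifying condition (3) of the definition of a repeat filter — that $W$ genuinely \emph{decides} $X_{b,\bar u}$ for every single $\bar u$, not just on a large set — which should follow from $A$ being measure-one and the approximation holding at \emph{every} $\alpha$; and verifying $[b]_W=A$ up to a bounded set, so that $[b]_W\in\bigcap\bar U$. I expect the $(3)\Rightarrow(4)$ direction, specifically the construction of the $V$-tree, to be the main obstacle; $(4)\Rightarrow(1)$ and the trivial implications are comparatively routine.
\end{proof}
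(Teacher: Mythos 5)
Your overall architecture matches the paper's: cite $(1)\Leftrightarrow(2)$, observe that $(2)\Rightarrow(3)$ is trivial, and close the cycle via $(3)\Rightarrow(4)\Rightarrow(1)$. However, both nontrivial implications have genuine gaps.

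In $(3)\Rightarrow(4)$ you never actually produce the tree: you list the three obstacles (nodes in $V$, height $\kappa$, getting one-sided containment) and leave them unresolved, and your intermediate claim that unboundedly many $\bar w$ satisfy $A\cap V_{\kappa(\bar w)}=^*b(\bar w)$ is both unobtainable here (equality mod bounded is essentially the LRP conclusion, strictly stronger than WRP) and unnecessary. The missing construction is: in $V[G]$, press down on the regressive map $\bar w\mapsto\gamma_{\bar w}$, where $\gamma_{\bar w}$ is least with $\MS_G\cap V_{\kappa(\bar w)}-V_{\gamma_{\bar w}}\subset b(\bar w)$, to obtain a stationary $B\subseteq \MS_G$ on which the value is a fixed $\gamma$; then let $T$ consist of all initial segments $b(\bar w)\cap V_\alpha$ for $\bar w\in B$, ordered by end-extension. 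Nodes lie in $V$, levels are small by inaccessibility, and stationarity of $B$ gives height $\kappa$. A cofinal branch $d$ lies in $V$ by Theorem \ref{approximation}; $d\in\bigcap\bar U$ because $d\supseteq \MS_G-V_\gamma$ (this is where the fixed $\gamma$ is used); and for each $\alpha$ the set $\{\bar w: d\cap V_\alpha\subset b(\bar w)\}$ is $\bar U$-non-null because its trace on $\MS_G$ is unbounded.

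In $(4)\Rightarrow(1)$ your filter is the wrong one. The filter generated by the co-bounded sets together with the tails $A-V_\alpha$ need not measure $b$: having $X_{b,\bar u}\in W$ would require $\bar u\in b(\bar v)$ for every sufficiently large $\bar v\in A$, whereas $(4)$ only guarantees that for each $\alpha$ some $\beta$ has $A\cap V_\alpha\subseteq j(b)(\bar U\restriction\beta)$; nothing prevents $\{\bar v\in A:\bar u\in b(\bar v)\}$ and its complement in $A$ from both being cofinal, so your ``delicate point'' does not in fact follow. The correct move is to generate $W$ from the sets $X_{b,\bar u}$ for $\bar u\in A$ and $X_{b,\bar u}^c$ for $\bar u\notin A$ (together with the co-bounded filter), so that measuring $b$ and $[b]_W=A\in\bigcap\bar U$ hold by construction; the real work is then $\kappa$-completeness and non-nullity of small intersections. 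That step requires first upgrading $(4)$ so that $A\cap V_\alpha$ is an \emph{initial segment} ($\sqsubset$) of some $j(b)(\bar U\restriction\beta)$ --- Claim \ref{4improved}, proved by another tree/branch argument using weak compactness of $\kappa$ in $V$. The initial-segment relation is what decides the negative instances: for $\bar u\notin A$ with $\kappa(\bar u)<\alpha$, one gets $\bar u\notin b(\bar v)$ whenever $A\cap V_\alpha\sqsubset b(\bar v)$, whereas mere containment gives nothing. This upgrade is entirely absent from your sketch.
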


\begin{proof}
The equivalence between 1 and 2 is proved in \cite{MR3960897} and it is immediate that 2 implies 3. 
\begin{itemize}

\item 3 implies 4: fix a measure function $b$. Let $G\subset R_{\bar{U}}$ be generic over $V$. In $V[G]$, the function $\bar{w}\in \MS_G \mapsto \gamma_{\bar{w}}$, where $\gamma_{\bar{w}}$ is the least $\gamma<\kappa(\bar{w})$ such that $\MS_G \cap V_{\kappa(\bar{w})}-V_{\gamma} \subset b(\bar{w})$, is regressive. Therefore, there exists some $\gamma$, such that $B=\{\bar{w}\in G: \gamma_{\bar{w}}=\gamma\}$ is stationary. More precisely, $O(B\cap \MS_G)$ is stationary in $\kappa$. Define a $\kappa$-tree $T$ such that $t\in T$ iff there exists some $\bar{w}\in B$ such that $t\sqsubseteq b(\bar{w})$ (namely there exists some $\alpha\leq \kappa(\bar{w})$ such that $t=b(\bar{w})\cap V_\alpha$). The order of $T$ is end extension. More precisely, $t\leq_{T} t'$ if there exists some $\alpha$ such that $t=t'\cap V_\alpha$. Notice that $T\subset V$. Apply the hypothesis, we can get a branch $d$ through $T$. By Theorem \ref{approximation}, $d\in V$. 

Observe that $d\in \bigcap \bar{U}$. To see this, it suffices to see $d$ contains $G-V_\gamma$. For any $\bar{u}\in G-V_{\gamma}$, $d\cap V_{\kappa(\bar{u})+lh(\bar{u})+1} \sqsubset b(\bar{w})$ for some $\bar{w}\in B$. By the choice of $\gamma$, we know that $\bar{u}\in b(\bar{w})$. Hence $\bar{u}\in d$.

Finally, it remains to show that for any $\alpha<\kappa$, $E=_{def}\{\bar{w}: d\cap V_\alpha \subset b(\bar{w})\}$ is $\bar{U}$-non-null. But this follows immediately from the fact that $O(E\cap \MS_G)$ is unbounded in $\kappa$.
\item 4 implies 1: given a measure function $b$, apply 4 to get $A\in \bigcap \bar{U}$ with the property. We first show a strengthening of the conclusion at 4 is possible. 
    \begin{claim}\label{4improved}
    We can find an $A$ as in the conclusion of 4 that satisfies for any $\alpha<\kappa$, there exists some $\beta<lh(\Bar{U})$, with $A\cap V_\alpha \sqsubset j(b)(\Bar{U}\restriction \beta)$.
    \end{claim}
    \begin{proof}[Proof of the Claim]
    Note the difference is ``$\subset$'' is replaced with ``$\sqsubset$''. Let $A'$ be given by the conclusion of 4. For each $\alpha<\kappa$, let $\beta_\alpha<lh(\bar{U})$ be the least witnessing ordinal that $A'\cap V_\alpha \subset j(b)(\bar{U}\restriction \beta_\alpha)$. Consider the tree on $V_\kappa$ defined by $T=\{j(b)(\bar{U}\restriction \beta_\alpha)\restriction V_\gamma: \gamma\leq \alpha<\kappa\}$, ordered by end extension. By the weak compactness of $\kappa$ in $V$, there exists a branch $A$ through the tree $T$. Clearly, for each $\alpha<\kappa$, there is some $\beta<lh(\bar{U})$, such that $A\cap V_\alpha \sqsubset j(b)(\bar{U}\restriction \beta)$ by the definition of the tree. Note also that $A\in \bigcap \bar{U}$, as evidenced by the fact that $A'\subset A$. To see the latter, given $\alpha<\kappa$, we can find some $\alpha'\geq \alpha$ such that $A\cap V_\alpha \sqsubset j(b)(\bar{U}\restriction \beta_{\alpha'})$. In particular, $A\cap V_\alpha \supset A'\cap V_\alpha$.
    \end{proof}

Define $W'\subset \MS$ such that $B\in W'$ iff there is some $\bar{u}\in A$ with $B=X_{b, \bar{u}}$ or there is some $\bar{u}\notin A$ with $B=X_{b, \bar{u}}^c$. Let $\mathcal{F}\subset \MS$ be the co-bounded filter. 
Let $W$ be the upward closure of $\{\bigcap_{B\in u} B: u\in [W'\cup \mathcal{F}]^{<\kappa}\}$. 
To check $W$ as defined is as desired, we verify that $W\subset \bigcup \bar{U}$ is a $\kappa$-complete filter. It suffices to show that for any $\langle B_i: i<\mu\ra \subset W'$ where $\mu<\kappa$, it is the case that $\bigcap_{i<\mu} B_i$ is $\bar{U}$-non-null, since $\mathcal{F}\subset \bigcap \bar{U}$. 
%Depend on how $B_i$ gets into $W'$, we can divide them into two groups with a function $g: \mu \to 2$ and elements $\la \bar{u}_i: i<\mu\ra$ such that $g(i)=0$ iff $B_i = X_{b, \bar{u}_i}$ where $\bar{u}_i\in A$ and $g(i)=1$ iff $B_i = X_{b, \bar{u}_i}^c$ where $\bar{u}_i\not\in A$. 
For each $i<\mu$, by the definition of $W'$, there is some $\bar{u}_i$ witnessing that $B_i\in W'$. Namely, either $\bar{u}_i\in A$ and $B_i=X_{b,\bar{u}_i}$ or $\bar{u}_i\not\in A$ and $B=X^c_{b, \bar{u}_i}$. 
Let $\alpha> \sup_{i<\mu} \kappa(\bar{u}_i)$. Claim \ref{4improved} implies that $\{\bar{v}: A\cap V_\alpha \sqsubset b(\bar{v})\}$, as a subset of $\bigcap_{i<\mu} B_i$, is $\bar{U}$-non-null.
        
\end{itemize}
\end{proof}

\begin{remark}
In item 4 above, we get an equivalent statement by replacing ``...there exists some $\beta<lh(\Bar{U})$...'' with ``...there exist unboundedly many $\beta<lh(\Bar{U})$...''.
\end{remark}

\begin{lemma}\label{notweaklycompact}
If $|2^\kappa|^M$ does not divide $lh(\Bar{U})$ as ordinals, then $V^{R_{\Bar{U}}} \models \kappa$ is not weakly compact. 
\end{lemma}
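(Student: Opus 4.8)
The plan is to use the characterization of weak compactness in the Radin extension given by Proposition~\ref{characterization}, specifically by refuting item (4): I will exhibit a measure function $b$ for which no $A \in \bigcap \bar{U}$ can satisfy the covering property. The key structural input is Remark~\ref{tail2^kappa}: if $|2^\kappa|^M$ does not divide $lh(\bar{U})$, then there is some $\gamma_0 < lh(\bar{U})$ such that for every $\gamma' \in (\gamma_0, lh(\bar{U}))$, $\gamma'$ is \emph{not} a weak repeat point, and moreover (from the argument sketched there) for each such $\gamma'$ one has a concrete witness $B_{\gamma'} \in U(\gamma') \setminus \bigcup_{\gamma'' < \gamma'} U(\gamma'')$, arising from a fixed function $f$ coding well-orderings of power sets. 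The point is that these witnesses $B_{\gamma'}$ can be chosen \emph{coherently}: $B_{\gamma'}$ is defined by $\{\bar{w} : f(\kappa(\bar{w}))(lh(\bar{w}) \bmod |2^{\kappa(\bar{w})}|) = A_{\gamma'} \cap \kappa(\bar{w})\}$ where $A_{\gamma'} = j(f)(\kappa)(\gamma')$, so distinct $\gamma'$ (below $|2^\kappa|^M$, or in the appropriate residue) give \emph{disjoint} sets $B_{\gamma'}$ since the $A_{\gamma'}$'s are distinct subsets of $\kappa$ in $M$.

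First I would set up the measure function. For a measure sequence $\bar{w}$ with $lh(\bar{w})$ in the relevant range, let $b(\bar{w})$ be the complement (within $\MS \cap V_{\kappa(\bar{w})}$) of the ``local'' version $B^{\bar{w}}$ of the coding set determined by $\bar{w}$ itself --- i.e.\ $b(\bar{w}) = \MS \cap V_{\kappa(\bar{w})} \setminus \{\bar{v} : f(\kappa(\bar{v}))(lh(\bar{v}) \bmod |2^{\kappa(\bar{v})}|) = f(\kappa(\bar{w}))(lh(\bar{w})) \cap \kappa(\bar{v})\}$, with a harmless default (say $V_{\kappa(\bar{w})}$) when $lh(\bar{w})$ is out of range. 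One checks $b$ is a measure function: $b(\bar{w}) \in \bigcap \bar{w}$ because the excluded set is not $\bar{w}$-non-null, by the same computation as in Remark~\ref{tail2^kappa} (the excluded set belongs to $U$-measures only at levels $\geq$ its own coding index, which is $lh(\bar{w})$, hence to none of $w(\eta)$, $\eta < lh(\bar{w})$). Now compute $j(b)(\bar{U}\restriction\gamma')$ for $\gamma' > \gamma_0$: by \L o\'s, it is the complement of $B_{\gamma'}$, the set coding $A_{\gamma'} = j(f)(\kappa)(\gamma')$. So $j(b)(\bar{U}\restriction\gamma') = \MS \cap V_\kappa \setminus B_{\gamma'}$.

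Next, suppose toward a contradiction that some $A \in \bigcap \bar{U}$ witnesses item (4) for this $b$. Then for each $\alpha < \kappa$ there is $\beta < lh(\bar{U})$ with $A \cap V_\alpha \subseteq j(b)(\bar{U}\restriction\beta)$; by the Remark following Proposition~\ref{characterization} (or by just intersecting with $(\gamma_0, lh(\bar{U}))$, which is still $\bar{U}$-non-null-friendly for this purpose) we may take $\beta > \gamma_0$, so $A \cap V_\alpha \cap B_\beta = \emptyset$. On the other hand, $A \in \bigcap\bar{U} \subseteq U(\beta)$ and $B_\beta \in U(\beta)$, so $A \cap B_\beta \in U(\beta)$ and in particular $A \cap B_\beta$ is $\bar{U}$-non-null (it lies in a single measure $U(\beta)$), hence nonempty — and in fact unbounded, as its elements have arbitrarily large critical point (any measure-one or $U(\beta)$-positive subset of $\MS\cap V_\kappa$ meets $V_\kappa \setminus V_\alpha$ for every $\alpha$, since $\MS \cap V_\kappa \setminus V_\alpha$ belongs to every $\kappa$-complete ultrafilter on $V_\kappa$). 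Pick $\bar{v} \in (A \cap B_\beta) \setminus V_\alpha$; then $\bar{v} \in A \cap V_{\alpha'}$ for $\alpha' = \kappa(\bar{v})+1 > \alpha$, and applying (4) at level $\alpha'$ gives some $\beta' > \gamma_0$ with $A \cap V_{\alpha'} \subseteq \MS\setminus B_{\beta'}$, so $\bar{v} \notin B_{\beta'}$, forcing $\beta' \neq \beta$. Iterating, each new level forces a strictly different index, but the real obstruction — and the step I expect to be the crux — is to pin down that a \emph{single} $A$ cannot simultaneously avoid $B_\beta$ on a tail (for the $\beta$ it invokes) yet lie in $\bigcap\bar{U} \subseteq U(\beta)$ for cofinally many $\beta$: the resolution is that if $A$ meets condition (4) then $A \setminus B_\beta \supseteq A \cap V_{\alpha}$ is "large in an initial-segment sense" while $A \cap B_\beta \in U(\beta)$ is large in a tail sense, and a counting/diagonalization against the disjointness of the family $\{B_\gamma\}$ yields the contradiction. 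Concretely: by disjointness, $\{B_\gamma : \gamma_0 < \gamma < lh(\bar{U})\}$ are pairwise disjoint $\bar{U}$-positive sets, so no set $A \in \bigcap\bar{U}$ can be contained mod bounded in $\MS \setminus B_\gamma$ for unboundedly many $\gamma$ unless $A$ is eventually disjoint from $\bigcup_\gamma B_\gamma$; but $\bigcup_{\gamma_0<\gamma} B_\gamma$ contains a tail of $\MS$ (every $\bar{w}$ with $lh(\bar{w})$ in the right range lands in $B_{\gamma}$ for the $\gamma$ it codes), contradicting $A \in \bigcap \bar{U}$. This final ``$\bigcup B_\gamma$ is co-null'' observation, together with the disjointness, is what I would need to nail down carefully — it is where the non-divisibility hypothesis is genuinely used, via the mod-$|2^{\kappa(\bar{w})}|$ bookkeeping of Remark~\ref{tail2^kappa}.
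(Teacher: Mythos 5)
There is a genuine gap, and it sits exactly where you flag your own uncertainty. The problem is that your measure function does not refute item (4) of Proposition \ref{characterization}, because your closing ``counting/diagonalization'' step misreads what (4) asserts. Item (4) only demands that each \emph{bounded initial segment} $A\cap V_\alpha$ be contained in $j(b)(\bar{U}\restriction\beta)$ for some $\beta=\beta_\alpha$ \emph{depending on} $\alpha$; it does not demand $A\subseteq^* j(b)(\bar{U}\restriction\beta)$ for any single $\beta$, nor for unboundedly many $\beta$. Your contradiction is derived from the latter reading: you argue that $A$ cannot be ``contained mod bounded in $\MS\setminus B_\gamma$ for unboundedly many $\gamma$'' because $A\cap B_\gamma\in U(\gamma)$ is unbounded. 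That is true but irrelevant, since (4) never produces such a tail containment. Indeed, for your $b$ the statement (4) is in danger of simply being \emph{true} for every $A\in\bigcap\bar{U}$: a fixed bounded piece $A\cap V_\alpha$ has fewer than $\kappa$ elements, each carrying one local code, while the codes $A_\beta=j(f)(\kappa)(\beta)$ range over a large family of distinct subsets of $\kappa$, so one expects to find some $\beta$ whose code extends none of the finitely-many-per-level local codes appearing in $A\cap V_\alpha$, giving $A\cap V_\alpha\cap B_\beta=\emptyset$. At minimum, you give no argument that such a $\beta$ fails to exist, and that is precisely what refuting (4) requires.

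The idea you are missing is the paper's second application of the elementary embedding. Because the index $\beta_\alpha$ in (4) varies with $\alpha$, no contradiction is available at the level of $V$; the paper instead applies $j$ to the statement ``$\forall\alpha\,\exists\beta\,(B\cap V_\alpha\subseteq j(b)(\bar{U}\restriction\beta))$'' and evaluates it in $M$ at $\alpha=(2^\kappa)^{+M}<j(\kappa)$, which is large enough that $(V_\alpha)^M$ contains \emph{all} of the points $\bar{U}\restriction\xi$, $\xi<lh(\bar{U})$, simultaneously. This yields a \emph{single} index $\tau$ with $\bar{U}\restriction\xi\in j(j(b))(j(\bar{U})\restriction\tau)$ for every $\xi$, and the paper's $b$ is engineered (via the sandwich condition $lh(\bar{u})<\mathrm{index}(x^{\kappa(\bar{w})}_{lh(\bar{w})}\cap\kappa(\bar{u}))<\mathrm{index}(A\cap\kappa(\bar{u}))$ with an \emph{injective} enumeration) so that a single code $x^{j(\kappa)}_\tau\cap\kappa$ would then have to equal $x^\kappa_\gamma$ for ordinals $\gamma$ above every $\xi<lh(\bar{U})$, which contradicts injectivity. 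Without this second use of $j$ (or some substitute that collapses the $\alpha$-dependence of $\beta_\alpha$), the approach via a single code-per-sequence measure function does not close. Separately, your treatment of the case $lh(\bar{U})\geq|2^\kappa|^M$ (``the appropriate residue'') would also need the paper's additional bookkeeping with the set $D$ and the offsets $\delta_D(\bar{w})$, but that is secondary to the main gap.
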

\begin{proof}
Assume first that $lh(\Bar{U})<|2^\kappa|^M$. We will indicate later how to deal with the general case.
Define $g$ such that $g(\alpha)=\langle x^\alpha_l: l<|2^\alpha|\ra$ is an injective enumeration of subsets of $\alpha$. Denote $j(g)(\kappa)=\la x^\kappa_l: l<|2^\kappa|^M\ra$. Let $A=x^\kappa_{lh(\bar{U})}$. In general for any $\beta<\kappa$ and any set $B\subset \beta$, denote $index_g^\beta(B)$ to be the unique $l$ such that $B=x^\beta_l$. Note for a fixed $\beta$ and $B$ as above, $index_g^\beta(B)$ only depends on $g(\beta)$. 

Define a measure function $b$ such that $$b(\bar{w})=\{\bar{u}\in V_{\kappa(\bar{w})}: lh(\bar{u})<index^{\kappa(\bar{u})}_g(x^{\kappa(\bar{w})}_{lh(\bar{w})}\cap \kappa(\bar{u}))<index_g^{\kappa(\bar{u})}(A\cap \kappa(\bar{u}))\}, $$$$ \text{whenever the set is in }\bigcap\bar{w} \text{ and otherwise }b(\bar{w})=V_{\kappa(\bar{w})}.$$

Let us call the second case in the definition of $b$ \emph{vacuous}. Observe that $\{\bar{w}\in V_\kappa: b(\bar{w}) \text{ is not vacuously defined}\}\in \bigcap\bar{U}$. To see this, let $E=_{\mathrm{def}}\{\bar{u}\in V_{\kappa}: lh(\bar{u})<index^{\kappa(\bar{u})}_{g} (x^\kappa_\tau \cap \kappa(\bar{u})) < index_{j(g)}^{\kappa(\bar{u})}(j(A)\cap \kappa(\bar{u}))= index_{g}^{\kappa(\bar{u})}(A\cap \kappa(\bar{u}))\}$. We show that $E\in \bigcap \bar{U}\restriction \tau$, which implies $j(b)(\bar{U}\restriction \tau)=E$. Given $\gamma<\tau$, we know that $\bar{U}\restriction \gamma\in j(E)$ iff $x^\kappa_\tau = x^\kappa_\beta$, for some $\beta$ satisfying $\gamma<\beta<index_{j(g)}^\kappa(A)=lh(\bar{U})$. The latter is true as evidenced by $\beta=\tau$.

Suppose for the sake of contradiction that $\kappa$ is weakly compact in $V^{R_{\bar{U}}}$. Apply 4 in Proposition \ref{characterization} to $b$ to get $B\in \bigcap \bar{U}$ such that:

\emph{for any $\alpha<\kappa$, there is $\beta<lh(\bar{U})$ with $B\cap V_\alpha \subset j(b)(\bar{U}\restriction \beta)$.} 
\newline
Apply $j$ again to the statement above, we get (in $M$): 

\emph{for any $\alpha<j(\kappa)$, there is $\beta<lh(j(\bar{U}))$ with $j(B)\cap V_\alpha \subset j(j(b))(j(\bar{U})\restriction \beta)$.} In particular, working in $M$, if we consider $\alpha=(2^\kappa)^{+M}<j(\kappa)$, then we can find some $\tau<j(lh(\bar{U}))$ such that $j(B)\cap (V_{\alpha})^M \subset j(j(b))(j(\bar{U})\restriction\tau)=$ 
\begin{equation}\label{jjkey}
\{\bar{u}\in j(V_\kappa): x^{j(\kappa)}_\tau\cap \kappa(\bar{u})= x^{\kappa(\bar{u})}_\beta, lh(\bar{u})<\beta<index_{j(j(g))}^{\kappa(\bar{u})}(j(j(A))\cap \kappa(\bar{u}))\}.
\end{equation}

For the sake of consistency, we use $\la x^{j(\kappa)}_l: l<dom[j(j(g))(j(\kappa))]\ra$ to denote the enumeration of $j(j(g))(j(\kappa))$. Since $lh(\bar{U})<dom(j(g)(\kappa))$, the elementarity of $j$ implies that $j(lh(\bar{U}))<dom(j(j(g))(j(\kappa)))$. Thus the sentence above makes sense.

%Since $j(B)\cap (V_{\alpha})^M\supset \{\bar{U}\restriction \xi: \xi<lh(\bar{U})\}$, 
We show first that there exists some $\gamma<lh(\bar{U})$ such that $x^{j(\kappa)}_\tau \cap \kappa = x^\kappa_\gamma$. To see this, fix some $\xi<lh(\bar{U})$. Since $\bar{U}\restriction \xi \in j(B)\cap (V_{\alpha})^M$, by (\ref{jjkey}), we know $x^{j(\kappa)}_\tau \cap \kappa = x^{\kappa}_\gamma$, for some $\gamma>\xi$ and $\gamma<index_{j(j(g))}^\kappa(j(j(A))\cap \kappa)=index_{j(j(g))}^\kappa(A)=index_{j(j(g)\restriction \kappa)}^\kappa(A)=index_{j(g)}^\kappa(A)=lh(\bar{U})$.

Finally, fix some $\xi'>\gamma$ and $\xi'<lh(\bar{U})$ such that $\bar{U}\restriction \xi' \in j(B)\cap (V_\alpha)^M$. Reasoning as above, we can find $\gamma'>\xi'>\gamma$ with $\gamma'<lh(\bar{U})$ such that $x^{j(\kappa)}_\tau \cap \kappa= x^\kappa_{\gamma'}$. But $x^\kappa_\gamma\neq x^\kappa_{\gamma'}$ by the injectivity of the enumeration. This is a contradiction.

In general without assuming $lh(\bar{U})<|2^\kappa|^M$, we modify as follows. First notice that $lh(\bar{U})\leq j(\kappa)$ and $j(\kappa)$ is divisible by $|2^\kappa|^M$. Hence, we may assume $lh(\bar{U})<j(\kappa)$. By Remark \ref{tail2^kappa}, there exists some $\mu < lh(\bar{U})$ so that $lh(\bar{U})=\mu + \delta$ with  $\delta<|2^\kappa|^M$, and $U(\mu)$ is novel. It follows that there is some $D \subseteq \MS \cap V_\kappa$ so that  $D \in U(i)$ if and only if $i < \mu$. 
For each $\bar{w} \in \MS \cap V_\kappa$ let 
$i_D(\bar{w})$ be least value $i < lh(\bar{w})$ so that $D \cap V_{\kappa(\bar{w})} \not\in \bar{w}(i)$ if such $i$ exists, and leave $i_D(\bar{w})$ undefined otherwise.
If $i_D(\bar{w})$ is defined then clearly $i_D(\bar{w}) < lh(\bar{w})$, and we set $\delta_D(\bar{w})$ be the unique $\delta$ for which 
$i_D(\bar{w}) + \delta = lh(\bar{w})$. 

%and $\mu$ is a multiple of $|2^\kappa|^M$. Note $\delta>0$ and is of cofinality at least $\kappa^+$. here is some $\nu$, $\mu \leq \nu < \mu + \delta$ so that $U(\mu)$ is novel, and we can take some $D\subset \MS\cap V_\kappa $ such that for any $i$, $D\in U(i)$ iff $i\leq \nu$.
Define $g$ as before and $A=x^\kappa_{\delta}$. Define $b$ such that 
$$b(\bar{w})=(D\cap V_{\kappa(\bar{w})})\cup  \{\bar{u}\in V_{\kappa(\bar{w})}\cap D^c: 
x^{\kappa(\bar{w})}_{\delta_D(\bar{w})} \cap \kappa(\bar{u}) = x^{\kappa(\bar{u})}_\beta
$$

%x^{\kappa(\bar{w})}_{lh(\bar{w})\mod |2^{\kappa(\bar{w})}|}\cap \kappa(\bar{u})=x^{\kappa(\bar{u})}_\beta,$$

$$\text{ where } \delta_D(\bar{u}) <\beta<index_g^{\kappa(\bar{u})}(A\cap \kappa(\bar{u}))\}, $$

$$ \text{whenever the set is in }\bigcap\bar{w} \text{ and otherwise }b(\bar{w})=V_{\kappa(\bar{w})}.$$

Again observe that $\{\bar{w}\in V_\kappa: b(\bar{w}) \text{ is not vacuously defined}\}\in \bigcap\bar{U}$. Apply 4 in Proposition \ref{characterization} to $b$ to get $B\in \bigcap \bar{U}$ as before. If we let $\alpha=(lh(\bar{U})+2^\kappa)^{+M}$, then we can find some $\tau<j(lh(\bar{U}))$ such that $j(B)\cap (V_{\alpha})^M \subset j(j(b))(j(\bar{U})\restriction \tau)$. The choice of $\alpha$ ensures that $\{\bar{U}\restriction \nu: \nu<lh(\bar{U})\}\subset (V_\alpha)^M$.
Fix some $\xi<lh(\bar{U})$ such that $\xi>\mu$. Since $\bar{U}\restriction \xi \in j(B)\cap (V_{\alpha})^M$, it follows that $\bar{U}\restriction \xi \in j(j(b))(j(\bar{U})\restriction \tau)$. Since $j(j(D))\cap j(V_\kappa)=j(j(D)\cap V_\kappa)=j(D)$, we know that $\bar{U}\restriction \xi \not\in j(j(D))$ since $\bar{U}\restriction \xi\not\in j(D)$ and $\bar{U}\restriction \xi \in j(V_\kappa)$. As a result, by looking at the definition of $b$, we must have 
$$ x^{j(\kappa)}_{{j(j(\delta_D))(j(\bar{U})\uhr\tau) }} \cap \kappa = x^\kappa_\gamma, \text{ where }j(\delta_D)(\bar{U}\uhr \xi)=\xi-\mu <\gamma < \delta.$$ 

Fix another $\xi'<lh(\bar{U})$ such that $\xi' -\mu > \gamma$. Repeating the argument as in the case where $lh(\bar{U})<|2^\kappa|^M$, we get some $\gamma' >\gamma$ such that $x^{j(\kappa)}_{{j(j(\delta_D))(j(\bar{U})\uhr\tau) }} \cap \kappa = x^\kappa_{\gamma'}$. This contradicts with the fact that $x^\kappa_\gamma \neq x^\kappa_{\gamma'}$.
\end{proof}

\begin{remark}
The proof in Lemma \ref{notweaklycompact} actually produces a measure function $b$ such that there is \emph{no $\bar{U}$-positive} $A$, such that for any $\alpha<\kappa$, there is $\beta<lh(\bar{U})$, $A\cap V_\alpha \subset j(b)(\bar{U}\restriction \beta)$.
\end{remark}

\begin{remark}
Similar proof as in Lemma \ref{notweaklycompact} shows $\kappa$ is not weakly compact in $V^{R_{\bar{U}}}$ if there is some $\nu$ such that $lh(\bar{U})= \nu + |2^\kappa|^M$.
\end{remark}

\section{Guessing principles in Radin extensions}\label{guessing}

\begin{definition}
Let $V \subseteq W$ be two transitive models of set theory, with $\kappa$ being a regular cardinal in $W$ and $S\subset \kappa$ being a stationary subset. We say 
\begin{itemize}
    \item $V$-$\diamondsuit(S)$ holds (in $W$) if there exists $\langle s_\alpha\subset \alpha: \alpha<\kappa\rangle$ such that for all $X\subset \kappa$ in $V$, it is the case that $\{\alpha\in S: X\cap \alpha = s_\alpha\}$ is stationary.
    \item $V$-$\diamondsuit^-(S)$ holds (in $W$) if there exists $\langle \mathcal{S}_\alpha\in [\mathcal{P}(\alpha)]^{\leq |\alpha|}: \alpha<\kappa\rangle$ such that for all $X\subset \kappa$ in $V$, it is the case that $\{\alpha\in S: X\cap \alpha \in \mathcal{S}_\alpha\}$ is stationary.
\end{itemize}

\end{definition}

Cummings and Magidor \cite{CummingsMagidor} showed  that $V$-diamonds are equivalent to diamonds at $\kappa$ in Radin generic extensions of $V$ preserving the regularity of $\kappa$. 
%For completeness, we supply a proof in the following specific case. 

\begin{theorem}[Cummings-Magidor, \cite{CummingsMagidor}]\label{V-diamond}
Let $G\subset R_{\bar{U}}$ be generic over $V$ and assume that $\kappa$ is regular in $V[G]$. Then in $V[G]$, for any stationary $S\subset \kappa$, $V$-$\diamondsuit^-(S)$ is equivalent to $\diamondsuit^-(S)$.
\end{theorem}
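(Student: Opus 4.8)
\emph{Proof plan.} The direction $\diamondsuit^-(S)\Rightarrow V$-$\diamondsuit^-(S)$ is immediate, since $V$-$\diamondsuit^-(S)$ only asks that one guess those $X\subseteq\kappa$ which happen to lie in $V$. So the plan is to establish the converse inside $V[G]$: starting from a $V$-$\diamondsuit^-(S)$-sequence $\langle\mathcal S_\alpha:\alpha<\kappa\rangle$, I want to manufacture a genuine $\diamondsuit^-(S)$-sequence. The point of departure is Corollary \ref{RepresentationLemma}, which says that every $X\subseteq\kappa$ in $V[G]$ is ``locally coded in $V$'': there are a name function $f\in V$ --- by which I mean $\operatorname{dom}(f)\subseteq\MS$ and each $f(\bar w)$ is an $R_{\bar w}$-name for a subset of $\kappa(\bar w)$ --- and an $\alpha_0<\kappa$ with $(f(\bar w))^{G\restriction R_{\bar w}}=X\cap\kappa(\bar w)$ for every $\bar w\in\MS_G\setminus V_{\alpha_0}$. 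Thus guessing the initial segments of $X$ ought to reduce to guessing fragments of the single $V$-set (the code of) $f$, together with reading off the local generic.

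To carry this out, first I would fix in $V$ a coherent coding $f\mapsto\hat f\subseteq\kappa$ of name functions, arranged so that for all $\alpha$ in some club $D\in V$ of $V$-inaccessibles, $\hat f\cap\alpha$ recovers $f\restriction(\MS\cap V_\alpha)$; this is possible because $\kappa$ is inaccessible in $V$ and an $R_{\bar v}$-name for a subset of $\kappa(\bar v)$ has rank only slightly above $\kappa(\bar v)$, so $f\restriction(\MS\cap V_\alpha)$ is coded by an element of $V_\alpha$ for inaccessible $\alpha$. Since $\kappa$ is regular in $V[G]$, $C_G$ is a club in $\kappa$. Then in $V[G]$ I define
\[
\mathcal T_\alpha=\Big\{\,\bigcup\{(F_s(\bar v))^{G\restriction R_{\bar v}}:\bar v\in\MS_G\cap V_\alpha\cap\operatorname{dom}(F_s)\}\;:\;s\in\mathcal S_\alpha\text{ coding a name function }F_s\,\Big\}.
\]
Each interpretation $(F_s(\bar v))^{G\restriction R_{\bar v}}$ is computed from $G$ (here $G\restriction R_{\bar v}$ is the generic induced on the factor $R_{\bar v}$ of $R_{\bar U}$ below a condition of $G$), so $\langle\mathcal T_\alpha:\alpha<\kappa\rangle\in V[G]$; and every element of $\mathcal T_\alpha$ is a subset of $\alpha$, with $|\mathcal T_\alpha|\le|\mathcal S_\alpha|\le|\alpha|$, as required for a $\diamondsuit^-$-sequence.

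Next I would verify that $\langle\mathcal T_\alpha\rangle$ guesses an arbitrary $X\subseteq\kappa$ in $V[G]$. Take $f$ and $\alpha_0$ as above; after replacing $f(\bar v)$ by a name for $\emptyset$ whenever $\kappa(\bar v)\le\alpha_0$ --- which keeps $f$ in $V$ --- we may assume $(f(\bar v))^{G\restriction R_{\bar v}}=X\cap\kappa(\bar v)$ for all $\bar v\in\MS_G$ with $\kappa(\bar v)>\alpha_0$ and $(f(\bar v))^{G\restriction R_{\bar v}}=\emptyset$ otherwise. If $\alpha$ is a limit point of $C_G$ with $\alpha>\alpha_0$, then $C_G\cap(\alpha_0,\alpha)$ is cofinal in $\alpha$, and it follows that $X\cap\alpha=\bigcup\{(f(\bar v))^{G\restriction R_{\bar v}}:\bar v\in\MS_G\cap V_\alpha\}$. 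Hence for every $\alpha$ which is a limit point of $C_G$, lies in $D$, exceeds $\alpha_0$, and satisfies $\hat f\cap\alpha\in\mathcal S_\alpha$, the name function $F_{\hat f\cap\alpha}$ coded by $\hat f\cap\alpha$ is exactly $f\restriction(\MS\cap V_\alpha)$, so $X\cap\alpha\in\mathcal T_\alpha$. The set $E$ of $\alpha$ which are limit points of $C_G$, lie in $D$, and exceed $\alpha_0$ is a club of $V[G]$; since $\hat f\in V$ and $\langle\mathcal S_\alpha\rangle$ witnesses $V$-$\diamondsuit^-(S)$, the set $\{\alpha\in S:\hat f\cap\alpha\in\mathcal S_\alpha\}$ is stationary, so its intersection with $E$ is a stationary subset of $S$ on which $X\cap\alpha\in\mathcal T_\alpha$. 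This shows $\langle\mathcal T_\alpha\rangle$ witnesses $\diamondsuit^-(S)$.

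The main obstacle --- more a matter of careful bookkeeping than of ideas --- is the rank matching: the name values $f(\bar v)$ used to reconstruct $X\cap\alpha$ must all have rank below $\alpha$, which is precisely why the reconstruction is carried out at limit points of the Radin club $C_G$ rather than at arbitrary $\alpha\in C_G$ (at a successor point of $C_G$ one would instead need the name $f(\bar w_\alpha)$ at the measure sequence $\bar w_\alpha\in\MS_G$ of critical point $\alpha$, whose rank exceeds $\alpha$). One must also make precise the coherent $V$-coding $f\mapsto\hat f$ and the extraction of the induced generic $G\restriction R_{\bar v}$ from $G$; both are standard features of the factorization of Radin forcing, so once the framework is in place the verification is soft.
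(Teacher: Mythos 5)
Your argument is correct and follows essentially the same route as the paper's: reduce an arbitrary $X\subseteq\kappa$ to a name function $f\in V$ via Corollary \ref{RepresentationLemma}, guess (a code for) $f\restriction V_\alpha$ with the $V$-$\diamondsuit^-(S)$-sequence, and decode each guess by taking the union of the interpretations $(F_s(\bar v))^{G\restriction R_{\bar v}}$ over $\bar v\in\MS_G\cap V_\alpha$. The only cosmetic difference is that the paper filters out incoherent guesses before taking unions, whereas you simply allow the extra junk sets, which is harmless for $\diamondsuit^-$ since the cardinality bound $|\mathcal T_\alpha|\le|\alpha|$ is preserved.
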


\begin{observation}\label{GroundWitness}
Suppose $cf(lh(\bar{U}))\geq \kappa^+$. Then $V^{R_{\bar{U}}}\models \diamondsuit(\kappa)$ holds iff there exists a function $f\in V$ such that $\dom (f)=\MS \cap V_\kappa$ and $f(\bar{w})\in [2^{\kappa(\bar{w})}]^{\leq \kappa(\bar{w})}$, satisfying that for any $X\subset \kappa$, $\{\bar{w}: X\cap \kappa(\bar{w})\in f(\bar{w})\}$ is $\bar{U}$-positive. 
\end{observation}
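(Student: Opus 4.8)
The plan is to recast the equivalence through $V$-diamonds and the Cummings--Magidor theorem. Since $cf(lh(\bar U))\geq\kappa^+$, $\kappa$ is regular in $V[G]$ for every $V$-generic $G\subseteq R_{\bar U}$; there $\diamondsuit(\kappa)$ is equivalent to $\diamondsuit^-(\kappa)$ by Kunen's theorem, and by Theorem \ref{V-diamond} the latter is equivalent to $V$-$\diamondsuit^-(\kappa)$. So it suffices to show that $V[G]\models V$-$\diamondsuit^-(\kappa)$ if and only if a function $f\in V$ as in the statement exists. The one extra tool I will use is the following bookkeeping fact, immediate from Theorems \ref{tailclub} and \ref{tailclubconverse} together with the fact that each point of $C_G$ is $\kappa(\bar w)$ for a \emph{unique} $\bar w\in\MS_G$: for $B\in V$ with $B\subseteq\MS\cap V_\kappa$, $B$ is $\bar U$-positive if and only if $O(B\cap\MS_G)$ is stationary in $\kappa$ in $V[G]$ (and this then holds in every $V$-generic extension, or in none). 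Indeed, if $B$ is not $\bar U$-positive then $(\MS\cap V_\kappa)\setminus B$ is $\bar U$-tail-measure-one, so by Theorem \ref{tailclubconverse} its $O$-image contains a club, which is disjoint from $O(B\cap\MS_G)$; while if $B$ is $\bar U$-positive then, given a name $\dot D$ for a club, Theorem \ref{tailclub} produces a $\bar U$-tail-measure-one $\Gamma$ with a direct extension of the relevant condition forcing $O(\Gamma\cap G)\subseteq\dot D$, and picking $i<lh(\bar U)$ with $B,\Gamma\in U(i)$ and one-step extending by some $\bar w\in B\cap\Gamma$ of large critical point forces $\kappa(\bar w)\in\dot D\cap O(B\cap\MS_G)$.

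For the direction assuming $f\in V$ exists, I would work in $V[G]$ and set $\mathcal S_\alpha=f(\bar w_\alpha)$ when $\alpha\in C_G$, where $\bar w_\alpha$ is the unique member of $\MS_G$ with $\kappa(\bar w_\alpha)=\alpha$, and $\mathcal S_\alpha=\emptyset$ otherwise; the hypothesis on $f$ gives $\mathcal S_\alpha\in[\mathcal P(\alpha)]^{\leq|\alpha|}$. For $X\subseteq\kappa$ in $V$, the set $B_X=\{\bar w:X\cap\kappa(\bar w)\in f(\bar w)\}$ lies in $V$ and is $\bar U$-positive, so by the bookkeeping fact $O(B_X\cap\MS_G)=\{\alpha\in C_G:X\cap\alpha\in\mathcal S_\alpha\}$ is stationary; hence $\langle\mathcal S_\alpha\rangle$ witnesses $V$-$\diamondsuit^-(\kappa)$, and the reduction finishes this direction.

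For the converse, fix a $V$-$\diamondsuit^-(\kappa)$ sequence $\langle\mathcal S_\alpha:\alpha<\kappa\rangle\in V[G]$ and, in $V$, a pairing function on the ordinals whose restriction to each inaccessible $\alpha$ is a bijection $\alpha\times\alpha\to\alpha$. For $\alpha\in C_G$ I code $\mathcal S_\alpha$ by a set $T_\alpha\subseteq\alpha$ whose family of ``slices'' $\{(T_\alpha)_\xi:\xi<\alpha\}$ equals $\mathcal S_\alpha$ ($T_\alpha=\emptyset$ otherwise), take a name $\langle\dot T_\alpha:\alpha<\kappa\rangle$ and $p\in G$ forcing everything, and apply Lemma \ref{RepresentationLemma0} to get $q\leq p$ with $q_0=p_0$ and $g\in V$ such that $g(\bar w)$ is an $R_{\bar w}$-name for a subset of $\kappa(\bar w)$ with $q\fr\bar w\Vdash g(\bar w)=\dot T_{\kappa(\bar w)}$ for all $\bar w\in A^q$. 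Then in $V$, for $\bar w\in A^q$, I define
\[
f(\bar w)=\{\,Y\subseteq\kappa(\bar w):\exists\,\xi<\kappa(\bar w)\ \exists\,t\in R_{\bar w}\ \ t\Vdash_{R_{\bar w}} g(\bar w)_\xi=\check{Y}\,\},
\]
where $g(\bar w)_\xi$ is the canonical $R_{\bar w}$-name for the $\xi$-th slice of the set named by $g(\bar w)$, and $f(\bar w)=\emptyset$ for $\bar w\notin A^q$. Since $\kappa(\bar w)$ is inaccessible in $V$ one has $|R_{\bar w}|=\kappa(\bar w)$ (there are $\kappa(\bar w)$ measure sequences below it and at most $2^{\kappa(\bar u)}<\kappa(\bar w)$ measure-one sets for each), so each pair $(\xi,t)$ contributes at most one $Y$ and $f(\bar w)\in[\mathcal P(\kappa(\bar w))]^{\leq\kappa(\bar w)}$, as required. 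To see $f$ works, fix $X\subseteq\kappa$ in $V$ and a $V$-generic $G'$ with $q\in G'$. As $p\in G'$, the sequence coded by $\langle\dot T_\alpha\rangle^{G'}$ witnesses $V$-$\diamondsuit^-(\kappa)$ in $V[G']$, so $S=\{\alpha\in C_{G'}:X\cap\alpha\in\mathcal S_\alpha^{G'}\}$ is stationary; all but boundedly many members of $S$ are of the form $\kappa(\bar w)$ with $\bar w\in A^q\cap\MS_{G'}$ (since $q\in G'$), and for each such $\bar w$ we have $q\fr\bar w\in G'$, so (evaluating $g(\bar w)$ via $G'\restriction R_{\bar w}$ and using that $\dot T_{\kappa(\bar w)}^{G'}$ codes $\mathcal S_{\kappa(\bar w)}^{G'}$) there is $\xi$ with $X\cap\kappa(\bar w)=(g(\bar w)_\xi)^{G'\restriction R_{\bar w}}$; hence some condition of $G'\restriction R_{\bar w}$ forces $g(\bar w)_\xi=\check{X\cap\kappa(\bar w)}$ and $X\cap\kappa(\bar w)\in f(\bar w)$. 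Thus $O(\{\bar w:X\cap\kappa(\bar w)\in f(\bar w)\}\cap\MS_{G'})$ is stationary in $V[G']$, and by the bookkeeping fact $\{\bar w:X\cap\kappa(\bar w)\in f(\bar w)\}$ is $\bar U$-positive.

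The hard part is the converse: extracting from the $V[G]$-object $\langle\mathcal S_\alpha\rangle$ a ground-model $f$ whose values stay of size $\leq\kappa(\bar w)$. The representation lemma reduces this to handling, for each $\bar w$, a single $R_{\bar w}$-name, and the size bound is then exactly affordable because $|R_{\bar w}|=\kappa(\bar w)$; the only other delicate point is that the condition $q$ delivered by Lemma \ref{RepresentationLemma0} need not lie in the original $G$, which is why I test $f$ against a fresh generic $G'\ni q$ --- this is harmless since the target conclusion, $\bar U$-positivity of a ground-model set, does not depend on the choice of generic.
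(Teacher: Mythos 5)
Your argument follows essentially the same route as the paper's: reduce $\diamondsuit(\kappa)$ to $V$-$\diamondsuit^{-}(\kappa)$ via Kunen's theorem and Theorem \ref{V-diamond}, use Lemma \ref{RepresentationLemma0} to represent the guessing sequence by ground-model $R_{\bar w}$-names, collapse each name to a family of at most $\kappa(\bar w)$ ground-model sets, and translate between stationarity of $O(B\cap\MS_G)$ and $\bar U$-positivity of $B$ via Theorems \ref{tailclub} and \ref{tailclubconverse} (your ``bookkeeping fact'' is exactly how the paper uses these). The paper works directly with a $V$-$\diamondsuit(\kappa)$-sequence, so your coding of $\mathcal S_\alpha$ into a single set $T_\alpha$ and the slicing constitute a harmless but unnecessary detour.

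There is one concrete error: the claim $|R_{\bar w}|=\kappa(\bar w)$ is false. A condition in $R_{\bar w}$ carries a top component $(\bar w,A)$ with $A\in\bigcap\bar w$, and the filter $\bigcap\bar w$ has $2^{\kappa(\bar w)}$ elements, so $|R_{\bar w}|=2^{\kappa(\bar w)}$; your count only covers the lower parts. The size bound $|f(\bar w)|\le\kappa(\bar w)$ is nevertheless correct, but for the reason the paper gives, namely the $\kappa(\bar w)^{+}$-c.c.\ of $R_{\bar w}$: for a fixed $\xi$, conditions forcing $g(\bar w)_\xi$ to equal distinct checked sets are pairwise incompatible, so the set of ground-model values that some condition decides $g(\bar w)_\xi$ to take is indexed by an antichain and has size at most $\kappa(\bar w)$; now take the union over the $\kappa(\bar w)$ many $\xi$. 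With that repair the proof goes through; in particular, your step producing a condition of $G'\restriction R_{\bar w}$ that forces $g(\bar w)_\xi=\check{Y}$ for $Y=X\cap\kappa(\bar w)$ is legitimate precisely because $Y$ is a ground-model set, so the set of conditions deciding the statement $g(\bar w)_\xi=\check{Y}$ one way or the other is dense.
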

\begin{proof}
A well-known result of Kunen (Theorem III.7.8 in \cite{MR2905394}) states that $\diamondsuit^-(S)$ holds iff $\diamondsuit(S)$ holds for any regular $\kappa$ and any stationary set $S\subset \kappa$.
Along with Theorem \ref{V-diamond}, we know $V^{R_{\bar{U}}}\models \diamondsuit(\kappa)$ iff $V^{R_{\bar{U}}}\models \diamondsuit^-(\kappa)$ iff $V^{R_{\bar{U}}}\models V$-$\diamondsuit^-(\kappa)$ iff 
$V^{R_{\bar{U}}}\models V$-$\diamondsuit(\kappa)$.
\begin{itemize}
    \item Suppose there exists some $p\in R_{\bar{U}}$ forcing that $\langle \dot{s}_\alpha: \alpha<\kappa\rangle$ is a $V$-$\diamondsuit(\kappa)$-sequence. By Lemma \ref{RepresentationLemma0}, we may find $q\leq p$ and $g$ such that for any $\bar{w}\in A^q$, $g(\bar{w})$ is an $R_{\bar{w}}$-name for a subset of $\kappa(\bar{w})$ in $V$ and $q\fr \bar{w}\Vdash g(\bar{w})=\dot{s}_{\kappa(\bar{w})}$. Since $R_{\bar{w}}$ is $\kappa(\bar{w})^+$-c.c, we can find $f(\bar{w})\in [2^{\kappa(\bar{w})}]^{\leq \kappa(\bar{w})}$ such that $1\Vdash_{R_{\bar{w}}} g(\bar{w})\in f(\bar{w})$. Suppose $X\subset \kappa$, then we know that we can find some $r\leq q$ such that $r \Vdash `` \{\kappa(\bar{w}): \bar{w}\in \dot{G}, X\cap \kappa(\bar{w})=g(\bar{w})\in f(\bar{w})\}$ is stationary''. This implies that $\{\bar{w}: X\cap \kappa(\bar{w})\in f(\bar{w})\}$ is $\bar{U}$-positive by Theorem \ref{tailclubconverse}.
    \item For the other direction, suppose there is one such $f$, then it is easy to verify that if $G\subset R_{\bar{U}}$ is generic over $V$, then $\{f(\bar{w}): \bar{w}\in G\}$ is the desired $V$-$\diamondsuit^-(\kappa)$-sequence. 
\end{itemize}

\end{proof}

The last observation allows us to reproduce the following result of Cummings and Magidor. More precisely, if $|2^\kappa|^M$ divides $lh(\bar{U})$, it suffices to find a function $f$ as in Observation \ref{GroundWitness}. For each $\beta<\kappa$, we let $\langle x^\beta_l: l<2^\beta\rangle$ enumerate all subsets of $\beta$. It is not hard to verify that $\bar{w}\mapsto \{x^{\kappa(\bar{w})}_{lh(\bar{w}) \  \mathrm{ mod }\ 2^{\kappa(\bar{w})}}\}$ is the function as sought.

\begin{theorem}[Cummings-Magidor, \cite{CummingsMagidor}]\label{diamondholds}
If $|2^\kappa|^M$ divides $lh(\Bar{U})$, then $V^{R_{\Bar{U}}} \models \diamondsuit(\kappa)$.
\end{theorem}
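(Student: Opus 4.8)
The plan is to derive Theorem \ref{diamondholds} directly from Observation \ref{GroundWitness}. Since $|2^\kappa|^M$ divides $lh(\bar{U})$, in particular $lh(\bar{U})$ has cofinality $\mathrm{cf}(|2^\kappa|^M)\geq (2^\kappa)^M > \kappa$ (using that $|2^\kappa|^M$ is a cardinal above $\kappa$ in $M$, which holds since $j(\kappa)>2^\kappa$ and $M$ is closed enough), so the hypothesis $\mathrm{cf}(lh(\bar{U}))\geq\kappa^+$ of Observation \ref{GroundWitness} is satisfied. Hence it suffices to produce a function $f\in V$ with $\dom(f)=\MS\cap V_\kappa$, $f(\bar{w})\in[2^{\kappa(\bar{w})}]^{\leq\kappa(\bar{w})}$, such that for every $X\subseteq\kappa$ the set $\{\bar{w}\in\MS\cap V_\kappa : X\cap\kappa(\bar{w})\in f(\bar{w})\}$ is $\bar{U}$-positive, i.e. $\{i<lh(\bar{U}) : \{\bar{w}:X\cap\kappa(\bar{w})\in f(\bar{w})\}\in U(i)\}$ is cofinal in $lh(\bar{U})$.

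First I would fix in $V$ an enumeration $\langle x^\beta_l : l<2^\beta\rangle$ of $\mathcal{P}(\beta)$ for each $\beta<\kappa$, and define $f(\bar{w})=\{x^{\kappa(\bar{w})}_{lh(\bar{w})\bmod 2^{\kappa(\bar{w})}}\}$, which plainly lies in $[2^{\kappa(\bar{w})}]^{\leq\kappa(\bar{w})}$ (indeed it is a singleton). Write $lh(\bar{U})=\delta\cdot|2^\kappa|^M$ for some ordinal $\delta$. Now fix $X\subseteq\kappa$ and let $A=j(\langle\langle x^\beta_l:l<2^\beta\rangle:\beta<\kappa\rangle)(\kappa)$, an $M$-enumeration $\langle x^\kappa_l : l<|2^\kappa|^M\rangle$ of $\mathcal{P}(\kappa)^M\supseteq\{X\}$ (here we use $\mathcal{P}(\kappa)\subseteq M$, since $M$ is closed under $\kappa$-sequences). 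Let $l_X<|2^\kappa|^M$ be the unique index with $x^\kappa_{l_X}=X$. For any ordinal $\xi<\delta$, consider the index $i_\xi=\xi\cdot|2^\kappa|^M + l_X < lh(\bar{U})$. I claim $\{\bar{w}\in\MS\cap V_\kappa : X\cap\kappa(\bar{w})\in f(\bar{w})\}\in U(i_\xi)$: by the definition of the derived ultrafilter $U(i_\xi)$, this membership is equivalent to $\bar{U}\restriction i_\xi\in j(\{\bar{w}:x^{\kappa(\bar{w})}_{lh(\bar{w})\bmod 2^{\kappa(\bar{w})}}=X\cap\kappa(\bar{w})\})=\{\bar{u}\in j(V_\kappa): x^{\kappa(\bar{u})}_{lh(\bar{u})\bmod|2^{\kappa(\bar{u})}|^M}=j(X)\cap\kappa(\bar{u})\}$, and plugging in $\bar{u}=\bar{U}\restriction i_\xi$ (so $\kappa(\bar{u})=\kappa$, $lh(\bar{u})=i_\xi$, and $i_\xi\bmod|2^\kappa|^M=l_X$) this reduces to $x^\kappa_{l_X}=j(X)\cap\kappa=X$, which holds by choice of $l_X$. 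Since the indices $i_\xi$ for $\xi<\delta$ are cofinal in $lh(\bar{U})$ (as $\delta\cdot|2^\kappa|^M=lh(\bar{U})$), the set $\{\bar{w}:X\cap\kappa(\bar{w})\in f(\bar{w})\}$ is $\bar{U}$-positive, as required.

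The main obstacle is purely bookkeeping rather than conceptual: one must be careful with the ordinal arithmetic $lh(\bar{U})=\delta\cdot|2^\kappa|^M$ and verify that the residues $lh(\bar{w})\bmod|2^{\kappa(\bar{w})}|$ on the $M$-side match the $V$-side definition after applying $j$ (this is where $\MS\cap V_{\kappa(\bar{w})}\in\bigcap\bar{w}$ and the elementarity of $j$ on the enumeration sequence are used), and that $|2^\kappa|^M$ is genuinely an ordinal multiple inside $lh(\bar{U})$ so that the cofinal family $\{i_\xi\}$ really is cofinal. Once these are checked, Observation \ref{GroundWitness} delivers $\diamondsuit(\kappa)$ in $V^{R_{\bar{U}}}$ immediately.
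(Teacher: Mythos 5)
Your overall route is the paper's: reduce to Observation \ref{GroundWitness} and let $f(\bar{w})$ be the singleton of the $(lh(\bar{w})\bmod 2^{\kappa(\bar{w})})$-th subset of $\kappa(\bar{w})$. However, there is a genuine gap in the final cofinality claim. You take the enumerations $\langle x^\beta_l : l<2^\beta\rangle$ to be injective (you speak of ``the unique index $l_X$''), and then for a fixed $X$ the set of indices $i<lh(\bar{U})$ at which your guessing set lies in $U(i)$ is exactly $\{i<lh(\bar{U}) : i\bmod |2^\kappa|^M = l_X\}$. When the quotient $\delta$ is a successor ordinal this set has a largest element and is \emph{not} cofinal in $lh(\bar{U})$; in the basic case $lh(\bar{U})=|2^\kappa|^M$ (i.e.\ $\delta=1$) it is the singleton $\{l_X\}$. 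So the guessing set is not $\bar{U}$-positive and Observation \ref{GroundWitness} cannot be invoked. The missing idea, which is exactly how the paper's argument proceeds, is to choose each enumeration $\langle x^\beta_l : l<2^\beta\rangle$ with \emph{cofinal repetitions}: every subset of $\beta$ occurs on a set of indices cofinal in $|2^\beta|$ (possible since $|2^\beta|=|2^\beta|\cdot|2^\beta|$). By elementarity the $M$-enumeration $\langle x^\kappa_l : l<|2^\kappa|^M\rangle$ then lists each $X\subseteq\kappa$ cofinally often in $|2^\kappa|^M$, and since $lh(\bar{U})$ splits into consecutive blocks of order type $|2^\kappa|^M$, every final segment of $lh(\bar{U})$ contains a tail of some block and hence an index $i$ with $x^\kappa_{i\bmod|2^\kappa|^M}=X$. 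This yields $\bar{U}$-positivity in all cases.

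A secondary inaccuracy: divisibility of $lh(\bar{U})$ by $|2^\kappa|^M$ does not imply $\mathrm{cf}(lh(\bar{U}))\geq\kappa^+$ (consider $lh(\bar{U})$ equal to $\omega$ many copies of $|2^\kappa|^M$, which has cofinality $\omega$); in that case $\kappa$ is singularized and the theorem is read under the standing assumption that $\kappa$ remains regular, which is the hypothesis of Observation \ref{GroundWitness}. The remainder of your computation --- transporting the enumeration to $M$ via $j$, testing membership in $U(i)$ by evaluating at $\bar{U}\restriction i$, and using $j(X)\cap\kappa=X$ --- is correct and agrees with the paper.
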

\begin{comment}
\begin{proof}Let $g$ be a function on $\kappa$ such that $g(\alpha)$ enumerates subsets of $2^\alpha$ in length $|2^\alpha|$ in a way that any $\sigma\subset \alpha$, $g(\alpha)(l)=\sigma$ for unboundedly many $l<|2^\alpha|$. Let $f(\bar{w})=g(\kappa(\bar{w}))(lh(\bar{w}) \mod |2^{\kappa(\bar{w})}|)$. Then it is easily checked that for any $X\subset \kappa$, there are unboundedly many $\xi<lh(\bar{U})$ such that $j(f)(\bar{U}\restriction \xi)=j(g)(\kappa)(\xi \mod |2^\kappa|^M)=X$. Hence the conclusion follows from Observation \ref{GroundWitness}. \end{proof}
\end{comment}
We are ready to prove the main result of the paper.
\begin{proof}[Proof of Theorem \ref{main}]
Let $\bar{U}$ be a measure sequence on $\kappa$ constructed by $j: V\to M$ and suppose $V^{R_{\bar{U}}}\models \kappa$ is weakly compact. By Lemma \ref{notweaklycompact}, $|2^\kappa|^M$ divides $lh(\bar{U})$. Hence, Theorem \ref{diamondholds} implies $\diamondsuit(\kappa)$ holds in $V^{R_{\bar{U}}}$.
\end{proof}

The following is a well-known theorem of Woodin (see \cite{CummingsWoodin} or
\cite{MR3960897} for published proofs). 

\begin{theorem}[Woodin]\label{diamondfailsWoodin}
If $|2^\kappa|$ does not divide $lh(\Bar{U})$ and $cf(lh(\bar{U}))\geq \kappa^+$, then $V^{R_{\Bar{U}}} \models \neg \diamondsuit(\kappa)$.
\end{theorem}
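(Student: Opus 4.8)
The plan is to argue by contradiction, leaning on the characterization of $\diamondsuit(\kappa)$ in $R_{\bar U}$-extensions given by Observation \ref{GroundWitness} (this is where the hypothesis $cf(lh(\bar U)) \geq \kappa^+$ enters). Suppose $V^{R_{\bar U}} \models \diamondsuit(\kappa)$. Then there is $f \in V$ with $\dom(f) = \mathcal{MS} \cap V_\kappa$ and $f(\bar w) \in [\mathcal{P}(\kappa(\bar w))]^{\leq \kappa(\bar w)}$ such that for every $X \subseteq \kappa$ the set $\{\bar w : X \cap \kappa(\bar w) \in f(\bar w)\}$ is $\bar U$-positive. The first, routine, step is to push this through $j$: unwinding the definition of $\bar U$-positive and using $j\big(\{\bar w : X \cap \kappa(\bar w) \in f(\bar w)\}\big) = \{\bar w : j(X) \cap \kappa(\bar w) \in j(f)(\bar w)\}$ together with $\kappa(\bar U \restriction \tau) = \kappa = \mathrm{crit}(j)$ (so $j(X) \cap \kappa = X$), one sees that the property of $f$ says exactly: \emph{for every $X \subseteq \kappa$, the set $Z_X := \{\tau < lh(\bar U) : X \in j(f)(\bar U \restriction \tau)\}$ is cofinal in $lh(\bar U)$}. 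Here, for each $\tau < lh(\bar U)$, $\bar U \restriction \tau$ is a measure sequence of $M$ with critical point $\kappa$, so $j(f)(\bar U \restriction \tau)$ is, in $M$, a family of at most $\kappa$ subsets of $\kappa$; in particular $Z_X$ is well defined.

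The core of the proof is then a counting argument carried out inside $M$. Write $\theta := |2^\kappa|^M = |\mathcal{P}(\kappa)^M|^M$; since by hypothesis $\theta$ does not divide $lh(\bar U)$, ordinal division gives $lh(\bar U) = \delta \cdot \theta + \rho$ with $0 < \rho < \theta$, so $\delta\theta < lh(\bar U)$ and the interval $[\delta\theta, lh(\bar U))$ has order type $\rho < \theta$. I would then consider
\[
Y := \bigcup_{\delta\theta \leq \tau < lh(\bar U)} j(f)(\bar U \restriction \tau),
\]
a subset of $\mathcal{P}(\kappa)^M$ which, computed in $M$, is a union of at most $|\rho| < \theta$ many sets each of cardinality at most $\kappa < \theta$; since $\theta$ is a cardinal of $M$, this yields $|Y|^M < \theta = |\mathcal{P}(\kappa)^M|^M$. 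Hence there is $X^* \in \mathcal{P}(\kappa)^M \setminus Y$. Since $M \subseteq V$, $X^*$ is a genuine subset of $\kappa$, so the property of $f$ forces $Z_{X^*}$ to be cofinal in $lh(\bar U)$; but by the choice of $X^*$ we have $X^* \notin j(f)(\bar U \restriction \tau)$ for every $\tau \in [\delta\theta, lh(\bar U))$, i.e.\ $Z_{X^*} \subseteq \delta\theta < lh(\bar U)$, contradicting cofinality. (When $\delta = 0$, i.e.\ $lh(\bar U) < \theta$, the same computation directly produces $X^*$ with $Z_{X^*} = \emptyset$.) So no such $f$ exists, and by Observation \ref{GroundWitness}, $V^{R_{\bar U}} \models \neg\diamondsuit(\kappa)$.

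The step I expect to require the most care is keeping track of where cardinalities are computed: the count only works cleanly inside $M$ with $\theta = |2^\kappa|^M$, which is why the dichotomy is really about $|2^\kappa|^M$ (matching Theorems \ref{diamondholds} and \ref{notweaklycompact}). One should also note that in this context $\mathcal{P}(\kappa)^V = \mathcal{P}(\kappa)^M$ — because $lh(\bar U) \geq 2$ puts the normal measure $U(0)$ into $M$, and then for every $X \subseteq \kappa$ either $X$ or $\kappa \setminus X$ lies in $U(0) \in M$ — so the chosen $X^*$ is not ``missing'' any $V$-subsets of $\kappa$. The other routine point to state explicitly is the standard fact that $\bar U \restriction \tau \in \mathcal{MS}^M$ with $\kappa(\bar U \restriction \tau) = \kappa$ for all $\tau < lh(\bar U)$, which legitimizes applying $f$'s defining clause to the values $j(f)(\bar U \restriction \tau)$. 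No further input about (weak) repeat points, nor anything from Theorems \ref{tailclub}/\ref{tailclubconverse}, is needed beyond what is packaged into Observation \ref{GroundWitness}.
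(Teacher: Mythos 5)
The paper does not actually prove this theorem---it is quoted from Woodin with references to \cite{CummingsWoodin} and \cite{MR3960897}---so your argument can only be judged on its own merits. Your overall architecture is right and is exactly what Observation \ref{GroundWitness} is designed for: reduce $\diamondsuit(\kappa)$ to the existence of $f\in V$ with $f(\bar w)\in[2^{\kappa(\bar w)}]^{\leq\kappa(\bar w)}$ such that $\{\bar w: X\cap\kappa(\bar w)\in f(\bar w)\}$ is $\bar U$-positive, translate positivity into ``$Z_X=\{\tau: X\in j(f)(\bar U\restriction\tau)\}$ is cofinal in $lh(\bar U)$,'' and defeat this by a counting argument on the final segment left over by ordinal division. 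The translation through $j$ and the observation $\mathcal P(\kappa)^V=\mathcal P(\kappa)^M$ (most directly because $X=j(X)\cap\kappa\in M$) are fine.

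The gap is in the counting step, precisely the place you flagged as delicate: you carry it out ``inside $M$'' with $\theta=|2^\kappa|^M$, and this fails on two counts. First, the set $Y=\bigcup_{\delta\theta\leq\tau<lh(\bar U)}j(f)(\bar U\restriction\tau)$ need not be an element of $M$: although each $\bar U\restriction\tau\in M$, the sequence $\langle\bar U\restriction\tau:\tau<lh(\bar U)\rangle$ is in general \emph{not} in $M$ (that is the whole point of the Remark following the theorem, which adds the hypothesis $\bar U\in M$ to get the $|2^\kappa|^M$-version), so ``$|Y|^M<\theta$'' is not a legitimate computation. Second, the theorem's hypothesis is that $|2^\kappa|$ computed in $V$ does not divide $lh(\bar U)$; since $|2^\kappa|^M$ can be a different ordinal (it is $\geq|2^\kappa|^V$ and need not be a $V$-cardinal), dividing by $|2^\kappa|^M$ neither uses the stated hypothesis nor guarantees $|\rho|^V<|2^\kappa|^V$, which is what you actually need to find $X^*\notin Y$. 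The repair is simple and keeps your structure intact: perform the ordinal division by $\theta=|2^\kappa|^V$ in $V$, getting $0<\rho<\theta$, and count in $V$: each $j(f)(\bar U\restriction\tau)$ is an element of $M\subseteq V$ of $M$-cardinality $\leq\kappa$, hence of $V$-cardinality $\leq\kappa$, so $|Y|^V\leq|\rho|\cdot\kappa<\theta=|2^\kappa|^V$, and the argument concludes as you wrote. With that correction your proof is complete; your closing claim that ``the dichotomy is really about $|2^\kappa|^M$'' should be dropped, as the paper deliberately separates the $V$- and $M$-versions.
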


\begin{remark}
If $j: V\to M$ constructs $\Bar{U}$, and if $\Bar{U}\in M$, then $V^{R_{\Bar{U}}}\models \neg \diamondsuit(\kappa)$ whenever $|2^\kappa|^M $ does not divide $lh(\Bar{U})$. This is done by apply Woodin's theorem in $M$.
\end{remark}

Under certain circumstances, we can get stronger guessing principles. Recall the following characterization of stationary sets in the Radin extensions.

\begin{theorem}[\cite{MR3960897}]\label{stationary}
If $\dot{S}$ is an $R_{\bar{U}}$-name for a stationary subset of $\kappa$ and $p\in R_{\bar{U}}$, then there exists an extension $e=e_0\fr (\bar{U}, B)\leq p$ and a measure function $b$ such that 
\begin{enumerate}
    \item $Z_{e_0}=_{def}\{\bar{u}: \exists A\in \bigcap\bar{U}, e_0\fr (\bar{u}, b(\bar{u}))\fr (\bar{U}, A)\Vdash \kappa(\bar{u})\in \dot{S}\}$ is $\bar{U}$-positive, 
    \item for any $\overrightarrow{\eta}\in \MS^{<\omega}$ and $\overrightarrow{\eta}\subset B$, $Z_{\bar{e}} \downharpoonright \overrightarrow{\eta} =_{def} \{\bar{u}\in Z_{e_0}: \overrightarrow{\eta} << (\bar{u}, b(\bar{u}))\}$ is $\bar{U}$-positive. 
\end{enumerate}

\end{theorem}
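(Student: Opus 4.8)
The plan is to run a density/fusion argument that simultaneously handles the infinitely many "addability" requirements in clause (2). I would start from a condition $p = p_0 {}^\frown (\bar U, A^p)$ forcing that $\dot S$ is stationary. The first step is to extract, for each $\bar u$ appearing below the top of extensions of $p$, a candidate measure-one set $b(\bar u)$ witnessing that the "stem" $p_0 {}^\frown (\bar u, b(\bar u))$ decides $\kappa(\bar u) \in \dot S$ in the strongest possible way. Concretely, for each $\bar u \in A^p$ I would use the product decomposition $R_{\bar U}/(p_0 {}^\frown \bar u) \simeq R_{\bar u}/(p_0{}^\frown(\bar u, A^p \cap V_{\kappa(\bar u)})) \times R_{\bar U}/(\bar U, A^p - V_{\kappa(\bar u)+1})$ together with the $(2^{|R_{\bar u}|})^+$-closure of the tail part under $\leq^*$ (exactly as in the proof of Lemma \ref{RepresentationLemma0}) to find $b(\bar u) \in \bigcap \bar u$ and $A_{\bar u} \in \bigcap \bar U$ with $p_0 {}^\frown (\bar u, b(\bar u)) {}^\frown (\bar U, A_{\bar u})$ deciding whether $\kappa(\bar u) \in \dot S$. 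Diagonally intersecting the $A_{\bar u}$ and restricting $A^p$ accordingly gives a direct extension $e_0 {}^\frown (\bar U, B_0) \leq^* p$ and a measure function $b$ so that $Z_{e_0}$ as displayed is well-defined.

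The second step is to show $Z_{e_0}$ is $\bar U$-positive, and more generally that each $Z_{e_0} \downharpoonright \overrightarrow{\eta}$ is $\bar U$-positive; this is where the stationarity of $\dot S$ is used. Suppose toward a contradiction that $Z_{e_0} \downharpoonright \overrightarrow{\eta}$ fails to be $\bar U$-positive for some finite $\overrightarrow{\eta} \subseteq B_0$, i.e. $\{\bar u : \overrightarrow{\eta} << (\bar u, b(\bar u))\} \setminus Z_{e_0}$ contains a $\bar U$-tail-measure-one set. Then below the condition $e_0 {}^\frown \overrightarrow{\eta} {}^\frown (\bar U, B')$ (with $B'$ a suitable shrinking of $B_0$ avoiding $Z_{e_0}$ among addable sequences) no measure sequence $\bar u$ added at the top can have $\kappa(\bar u) \in \dot S$ forced; combined with Theorem \ref{tailclub} / Theorem \ref{tailclubconverse}, which say that the set $O(\Gamma \cap G)$ of critical points coming from a tail-measure-one set $\Gamma$ generates a club in $V[G]$, this would force a club disjoint from $\dot S$ below that condition — contradicting that $\dot S$ is forced stationary. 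The same argument applied with $\overrightarrow{\eta} = \emptyset$ gives clause (1). Here one must be a little careful that shrinking the top set $B_0$ to exclude finitely many addable configurations still leaves a measure-one set and does not destroy positivity elsewhere; this is routine since "being addable to $(\bar u, b(\bar u))$" for a fixed $\overrightarrow{\eta}$ is a measure-one-or-null dichotomy and there are only the finitely many $\overrightarrow{\eta}$ under consideration at any one time.

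The final bookkeeping step is to arrange clause (2) uniformly for \emph{all} finite $\overrightarrow{\eta} \subseteq B$ at once, rather than one at a time. For this I would enumerate the countably many relevant finite sequences of measure sequences (or rather handle them via a single diagonal shrinking of $B_0$) and perform a fusion: thin $B_0 \supseteq B_1 \supseteq \cdots$ so that at stage $n$ the positivity of $Z_{e_0} \downharpoonright \overrightarrow{\eta}$ is secured for all $\overrightarrow{\eta}$ with $\max \kappa(\overrightarrow{\eta})$ below the $n$-th element of some fixed increasing sequence, taking $B = \bigcap_n B_n$ (appropriately diagonalized) at the end; since each step only shrinks by a measure-one set and the Radin measures are $\kappa$-complete, $B \in \bigcap \bar U$. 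I expect the main obstacle to be precisely this last coordination: ensuring that the shrinking needed to guarantee positivity of $Z_{e_0}\downharpoonright\overrightarrow{\eta}$ for one $\overrightarrow{\eta}$ does not inadvertently kill positivity of $Z_{e_0}\downharpoonright\overrightarrow{\eta}'$ for another, and that the "no tail-measure-one set avoiding $Z_{e_0}$" contradiction with stationarity survives after the diagonalization. I would resolve it by noting that the obstruction sets are downward absolute enough — once a stem forces $\kappa(\bar u) \in \dot S$, further shrinking of the top set only above $V_{\kappa(\bar u)}$ preserves that fact — so the fusion is genuinely monotone and the final $e = e_0 {}^\frown (\bar U, B)$ works.
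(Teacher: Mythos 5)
First, a point of reference: the paper does not prove Theorem \ref{stationary} at all --- it is quoted from \cite{MR3960897} --- so your proposal can only be measured against the standard argument there. Measured that way, your outline has the right external shape (Prikry-style decisions for each $\bar u$, the tail-measure-one/club correspondence of Theorems \ref{tailclub} and \ref{tailclubconverse}, and a contradiction with stationarity), but the central step has a genuine gap. In your step 2 you pass from ``$\bar u\notin Z_{e_0}$ for tail-measure-one many $\bar u$'' to ``a club disjoint from $\dot S$''. But $\bar u\notin Z_{e_0}$ only says that no top part $A$ makes the \emph{specific} condition with lower part $e_0\fr(\bar u,b(\bar u))$ force $\kappa(\bar u)\in\dot S$; it does not produce any condition forcing $\kappa(\bar u)\notin\dot S$. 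In $V[G]$ the generic lower part below $\kappa(\bar u)$ is a proper extension of $e_0\fr(\bar u, b(\bar u))$ (extra measure sequences drawn from $b(\bar u)$, shrunken measure-one sets), and that extension may perfectly well force $\kappa(\bar u)\in\dot S$ even though $\bar u\notin Z_{e_0}$. So the club generated by the complementary tail-measure-one set need not avoid $S$, and no contradiction with stationarity is reached. Repairing this requires quantifying over \emph{all} lower parts $t\in R_{<\kappa}\cap V_{\kappa(\bar u)}$, using the Prikry property of $R_{\bar U}$ to convert ``no lower part decides positively'' into ``every lower part decides negatively,'' diagonally intersecting over the $\kappa$-many lower parts, and only afterwards extracting a single $e_0$ that works positively often (a counting/completeness argument over lower parts). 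This quantification over lower parts is the heart of the theorem and is absent from your sketch.

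Two further points. Your step 1 claims that the $(2^{|R_{\bar u}|})^+$-closure of the tail under $\leq^*$, ``exactly as in Lemma \ref{RepresentationLemma0},'' yields a single condition $p_0\fr(\bar u,b(\bar u))\fr(\bar U,A_{\bar u})$ \emph{deciding} $\kappa(\bar u)\in\dot S$. That lemma's method produces a dense set and a maximal antichain of deciding conditions in $R_{\bar u}$ --- i.e.\ a name --- not a single direct extension that decides; different members of the antichain can decide oppositely, so the amalgamated condition decides nothing. A genuine decision needs the full Prikry lemma for $R_{\bar U}$, and even then the required shrinking of $p_0$ varies with $\bar u$ and cannot be amalgamated over $\kappa$-many $\bar u$ by the mere $\kappa(\bar\mu_i)$-completeness of the lower measures. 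Finally, there are $\kappa$-many finite sequences $\overrightarrow{\eta}\in(\MS\cap V_\kappa)^{<\omega}$, not countably many, so the proposed $\omega$-stage fusion does not apply; clause (2) is not bookkeeping but a coherence requirement on $b$ relative to $B$ (one needs, for cofinally many $\tau$ with $\bar U\uhr\tau\in j(Z_{e_0})$, that $j(b)(\bar U\uhr\tau)$ absorbs $B\cap V_\alpha$ appropriately), which must be built into the construction of $b$ rather than recovered afterwards by shrinking $B$.
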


\begin{proposition}\label{diamondstat}

If $\bar{U}$ is a measure sequence constructed by $j: V\to M$ and $M\models cf(lh(\bar{U})) \geq 2^\kappa$, then in $V^{R_{\bar{U}}}$, $\diamondsuit(S)$ holds for all stationary $S\subset \kappa$.

\end{proposition}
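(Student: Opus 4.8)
The strategy is to combine the structural characterization of stationary sets from Theorem~\ref{stationary} with the idea behind Observation~\ref{GroundWitness}, localizing the "divides $lh(\bar U)$" computation to each stationary set separately. Fix an $R_{\bar U}$-name $\dot S$ for a stationary subset of $\kappa$ and an arbitrary condition $p$. First I would apply Theorem~\ref{stationary} to obtain an extension $e = e_0 {}^\frown (\bar U, B) \le p$ and a measure function $b$ witnessing that $Z_{e_0}$ is $\bar U$-positive and, moreover, that $Z_{e_0} \downharpoonright \overrightarrow{\eta}$ is $\bar U$-positive for every finite $\overrightarrow{\eta} \subseteq B$. Intuitively, $O(Z_{e_0} \cap \MS_G)$ will be a stationary subset of $\dot S^G$ in the generic extension, so it suffices to build a $\diamondsuit$-sequence that guesses correctly on a set of the form $O(Z_{e_0}\cap \MS_G)$ for every such $Z_{e_0}$; by Kunen's theorem (cited in Observation~\ref{GroundWitness}) and Theorem~\ref{V-diamond} it is enough to produce, in $V$, a function $\bar w \mapsto f(\bar w) \in [2^{\kappa(\bar w)}]^{\le \kappa(\bar w)}$ such that for any $X \subseteq \kappa$ in $V[G]$ the set $\{\bar w \in Z_{e_0} \cap \MS_G : X \cap \kappa(\bar w) \in f(\bar w)\}$ is unbounded.

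The key point is that $M \models cf(lh(\bar U)) \ge 2^\kappa$ gives us far more "room" at the top of the measure sequence than the $2^\kappa$-many possible traces $X \cap \kappa$ can occupy. Concretely: given $X \subseteq \kappa$ in $V[G]$, Corollary~\ref{RepresentationLemma} provides $f_X \in V$ and $\alpha_X < \kappa$ so that $(f_X(\bar w))^{G \restriction R_{\bar w}} = X \cap \kappa(\bar w)$ for all $\bar w \in \MS_G \setminus V_{\alpha_X}$. Since there are only $2^\kappa$-many such functions $f_X$ in $V$ up to the relevant equivalence, and $M \models cf(lh(\bar U)) \ge 2^\kappa$, I would fix in $V$ an enumeration $\langle x^\beta_l : l < 2^\beta\rangle$ of $\mathcal P(\beta)$ for each $\beta < \kappa$, and arrange a function $g$ on $\kappa$ such that $g(\alpha)$ enumerates $[2^\alpha]^{\le \alpha}$-many candidate traces, hitting each subset of $\alpha$ cofinally often in the index; then set $f(\bar w) = \{\, x^{\kappa(\bar w)}_{l} : l \text{ appropriately coded from } lh(\bar w)\,\}$. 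The $cf(lh(\bar U)) \ge 2^\kappa$ hypothesis guarantees that for every $X$ there are $\bar U$-positive-ly (indeed cofinally) many $\xi < lh(\bar U)$ with $j(f)(\bar U \restriction \xi) \ni X$; one then restricts to those $\xi$ that also land inside $j(Z_{e_0})$, which is possible precisely because clause~(2) of Theorem~\ref{stationary} keeps $Z_{e_0}$ positive after adding finite lower parts, so the density argument forcing $\bar w \in \MS_G$ with $\bar w \in Z_{e_0}$ and $X \cap \kappa(\bar w) \in f(\bar w)$ goes through.

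I would then assemble the final $\diamondsuit(\kappa)$-sequence: in $V[G]$ define $s_\alpha$ (for $\alpha = \kappa(\bar w_\alpha)$, $\bar w_\alpha \in \MS_G$) from $f(\bar w_\alpha)$ together with a guess at which stationary set and which name $\dot S, e_0$ are being targeted — exactly as in the proof of Theorem~\ref{V-diamond} one pushes the $\diamondsuit^-$-style list down to a genuine $\diamondsuit$-sequence via Kunen's equivalence, using that $\{\bar w \in Z_{e_0}\cap \MS_G : X\cap\kappa(\bar w)\in f(\bar w)\}$ is unbounded in $\kappa$ and contained in $\dot S^G$. The step I expect to be the main obstacle is the verification that clause~(2) of Theorem~\ref{stationary} interacts correctly with the coding by $lh(\bar w) \bmod 2^{\kappa(\bar w)}$-type indices: one needs that when we extend $e$ by a one-step $\bar w$ lying simultaneously in $B$, in $Z_{e_0}$, and in the "good index" set $\{\bar v : X \cap \kappa(\bar v) = x^{\kappa(\bar v)}_{l(\bar v)}\}$, the resulting condition still forces $\kappa(\bar w) \in \dot S$ — this is where the positivity of $Z_{e_0}\downharpoonright\overrightarrow\eta$ for all finite $\overrightarrow\eta$ is essential, and it needs to be checked that the measure-one set of "good indices" is among the sets one is allowed to intersect in. Once that density fact is in hand, the rest is bookkeeping analogous to Observation~\ref{GroundWitness} and the (omitted) proof of Theorem~\ref{V-diamond}.
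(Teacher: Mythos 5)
Your overall architecture matches the paper's: reduce to a $V$-$\diamondsuit^-(S)$-sequence via Theorem \ref{V-diamond} and Kunen's equivalence, invoke Theorem \ref{stationary} to get the positive sets $Z_{e_0}\downharpoonright\overrightarrow{\eta}$, and close with a density/compatibility argument using clause (2). But the step you yourself flag as "the main obstacle" is a genuine gap, and the coding you propose does not close it. If you index the guesses by $lh(\bar{w}) \bmod 2^{\kappa(\bar{w})}$ (the Cummings--Magidor device from Theorem \ref{diamondholds}), then upstairs you need a single $\xi < lh(\bar{U})$ satisfying \emph{simultaneously} (i) $\xi \equiv l \pmod{2^\kappa}$ for an index $l$ with $x^\kappa_l = X$, and (ii) $\bar{U}\restriction\xi \in j(\Gamma \cap Z_{e_0}\downharpoonright\overrightarrow{\eta})$. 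Each of these two sets of $\xi$'s is cofinal in $lh(\bar{U})$, but two cofinal sets need not meet: $Z_{e_0}\downharpoonright\overrightarrow{\eta}$ is only $\bar{U}$-\emph{positive}, not tail measure one, so $\{\xi : \bar{U}\restriction\xi \in j(Z_{e_0}\downharpoonright\overrightarrow{\eta})\}$ can perfectly well avoid every residue class you care about. This is exactly the difference between guessing on all of $\kappa$ (where no such constraint is present) and guessing on a prescribed stationary $S$.

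The paper's fix is to re-index \emph{along the positive set itself}: for $Z \subseteq \MS$ set $f_Z(\bar{w}) = \mathrm{otp}(\{\zeta < lh(\bar{w}) : \bar{w}\restriction\zeta \in Z\})$, and put $x^{\kappa(\bar{w})}_l$ into $\mathcal{S}_{\kappa(\bar{w})}$ when $l = f_{Z_{p_0}\downharpoonright\overrightarrow{\eta}}(\bar{w}) \bmod 2^{\kappa(\bar{w})}$ for some finite $\overrightarrow{\eta}$. Upstairs, $T = \{\zeta : \bar{U}\restriction\zeta \in j(Z_{p_0}\downharpoonright\overrightarrow{\eta})\}$ is cofinal, and $M \models cf(lh(\bar{U})) \geq 2^\kappa$ guarantees that even after removing the initial segment excluded by the tail-measure-one set $\Gamma$, the order type of what remains exceeds $2^\kappa$; hence $j(f_{Z})(\bar{U}\restriction\xi) = \mathrm{otp}(T\cap\xi)$ sweeps every residue modulo $2^\kappa$ as $\xi$ ranges over $T$ itself. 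Together with cofinal repetitions in the enumeration $\langle x^\kappa_l : l < 2^\kappa\rangle$, this produces a $\xi$ satisfying (i) and (ii) at once, and elementarity then reflects it to some $\bar{u} \in \Gamma \cap Z_{p_0}\downharpoonright\overrightarrow{\eta}$ for the compatibility argument. So your hypothesis is used not to make the "good index" set large in isolation, but to make the relativized order-type function surjective onto $2^\kappa$; without that relativization the argument does not go through. (A smaller point: your reduction asks only that the guessing set be unbounded, but $\diamondsuit(S)$ requires it to be stationary; the paper gets this by running the contradiction against an arbitrary club coded by a tail-measure-one $\Gamma$ via Theorem \ref{tailclubconverse}.)
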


\begin{proof}

Let $\dot{S}$ be a name for a stationary subset of $\kappa$ and $p=p_0^\frown (\bar{U}, A)$ be a condition forcing this, which furthermore satisfies the conclusion of Theorem \ref{stationary}. 
We will construct a $V$-$\diamondsuit^-(S)$-sequence in $V^{R_{\bar{U}}}$. By Lemma \ref{V-diamond} and a theorem of Kunen, we know $\diamondsuit(S)$ holds in $V^{R_{\bar{U}}}$.

For each $\alpha$, we fix $\langle x^\alpha_l: l<|2^\alpha|\rangle$ an enumeration of subsets of $\alpha$ with cofinal repetitions. For each $Z\subset \mathcal{MS}$, we define $f_Z(\bar{w})=otp(\{\xi<lh(\bar{w}): \bar{w}\restriction \xi \in Z\})$.

In $V[G]$, we define a $V$-$\diamondsuit^-(S)$-sequence as follows: if $\bar{w}\in \MS_G$ and $\kappa(\bar{w})\in S$, then define $\mathcal{S}_{\kappa(\bar{w})}$ to consist of $x^{\kappa(\bar{w})}_l$ for those $l$ such that there is some $\overrightarrow{\eta}\in A^{<\omega}\cap V_{\kappa(\bar{w})}$ such that $\bar{w}\in Z_{p_0} \downharpoonright\overrightarrow{\eta}$ and $l=(f_{Z_{p_0} \downharpoonright\overrightarrow{\eta}}(\bar{w}) \mod 2^{\kappa(\bar{w})} )< 2^{\kappa(\bar{w})}$. Note that by the definition for each $\bar{w}\in \MS_G$ with $\alpha=\kappa(\bar{w})$, it is true that $|\mathcal{S}_\alpha|\leq |\alpha|$.

Let's check that this is as desired. Suppose not, then there exists an extension $q$ of $p$ and $X\subset \kappa$ and a tail measure one $\Gamma$ as witnessed by $\nu<lh(\bar{U})$ such that $q$ forces that $X$ is not guessed at any point at $\dot{S}\cap O(
\MS_G\cap \Gamma)$. 

Write $q$ as ${p_0'} ^\frown (\overrightarrow{\eta}, \overrightarrow{A}_{\overrightarrow{\eta}})^\frown (\bar{U}, B')$, where there is some $\bar{v}$ with $p_0,p_0'\in R_{\bar{v}}$ and $p_0'\leq_{R_{\bar{v}}} p_0$. Here $\overrightarrow{\eta}$ is an increasing sequence of measure sequence and $\overrightarrow{A}_{\overrightarrow{\eta}}$ is a sequence of measure one sets with respect to the measure sequences on $\overrightarrow{\eta}$.

By the choice of $p$, we know that $Z_{p_0} \downharpoonright\overrightarrow{\eta}$ is $\bar{U}$-positive. Let $l<2^\kappa$ be such that $x^\kappa_l = X$ and $j(f_{Z_{p_0} \downharpoonright\overrightarrow{\eta}})(\bar{U}\restriction \xi)=l$ ($\mathrm{mod}\ 2^\kappa$) for some $\bar{U}\restriction \xi\in j(\Gamma \cap Z_{p_0} \downharpoonright\overrightarrow{\eta})$.
By elementarity, there is some $\bar{u}$ reflecting $\bar{U}\restriction \xi$, namely, there is some $\bar{u}\in \Gamma\cap Z_{p_0} \downharpoonright\overrightarrow{\eta}$ such that if $l=f_{Z_{p_0} \downharpoonright\overrightarrow{\eta}}(\bar{u}) (\mathrm{mod}\  2^{\kappa(\bar{u})})$, then $x^{\kappa(\bar{u})}_l = X\cap \kappa(\bar{u})$. Since $\overrightarrow{\eta}<< (\bar{u}, b(\bar{u}))$, we know that ${p_0'} ^\frown (\overrightarrow{\eta}, \overrightarrow{A}_{\overrightarrow{\eta}})^\frown (\bar{u}, b(\bar{u})) ^\frown (\bar{U}, B')$ is compatible with $p_0^\frown (\bar{u}, b(\bar{u}))^\frown (\bar{U}, A)$. Hence a common extension will force $\kappa(\bar{u})\in \dot{S}$, as well as $\bar{u}\in \Gamma$ and $X\cap \kappa(\bar{u}) \in \dot{\mathcal{S}}_\alpha$, contradicting with our assumption.
\end{proof}

\begin{remark}
Proposition \ref{diamondstat} is optimal in the following sense: if $M\models cf(lh(\bar{U}))<2^\kappa$ and $\bar{U}\in M$, then in $V^{R_{\bar{U}}}$ there exists a stationary set $S$ such that $\diamondsuit(S)$ fails. It suffices to show there exists a $\bar{U}$-positive set $B$ such that $M\models ``\{\tau: B\in U(\tau)\}$ has size $<2^\kappa$''. Then we can finish with the argument of Theorem \ref{diamondfailsWoodin}.

Recall $j: V\to M$ constructs $\bar{U}$. By a theorem by Cummings-Woodin \cite{CummingsWoodinbook}, we may assume that $M=\{j(f)(\bar{U}\restriction \xi): \xi \text{ is novel}\}$. The point is that any measure sequence is equivalent, in the sense of the Radin forcing defined from it, to another one constructed by the embedding formed by taking the limit ultrapower along the novel points on the measure sequence. 
\begin{comment}
Let $N$ be the transitive collapse of $$Hull^M(j(V)\cup \{\bar{U}\restriction \xi: \xi<lh(\bar{U})\}).$$ Then $N$ is isormorphic to the direct limit of $\langle M_\xi \simeq Ult(V, U(\xi)), \pi_{\xi, \zeta}: \xi\leq \zeta<lh(\bar{U}) \rangle$, where for any $\xi<\zeta<lh(\bar{U})$, by the hypothesis that $\bar{U}$ does not contain a weak repeat point, there is some $A_{\xi}\in U(\xi)-\bigcup \bar{U}(<\xi)$, then $g_{\xi, \zeta}(\bar{w})$ returns the least $\gamma$ such that $\bar{w}\restriction \gamma\in A_\xi$. Then $\pi_{\xi, \zeta}: M_\xi \to M_{\zeta}$ maps $[f]_{U(\xi)}$ to $[f\circ g_{\xi,\zeta}]_{U(\zeta)}$. Let $\pi=\pi_{0, \infty}: V\to N$ and $\pi_{\xi, \infty}: M_\xi \to N$ be the respective limit embeddings. Let $k: N \to M$ be the elementary embedding defined such that $k(\pi_{\xi, \infty} ([f]))= j(f)(\bar{U}\restriction \xi)$.

Note that the critical point of $N$ should be at least $(2^\kappa)^+$ as computed in $N$, which is greater than $lh(\bar{U})$. In other words, $lh(\bar{U})$ has cofinality $<2^\kappa$ in $N$ as well. For simplicity, we may assume $M=N$ and $i=j$ for the discussion below. 
\end{comment}

By the simplification, we can find $f$ and a novel $\xi<lh(\bar{U})$ such that $j(f)(\bar{U}\restriction \xi)= E $, which is cofinal in $lh(\bar{U})$ and of size $<2^\kappa$, as computed in $M$. Let $A\in U(\xi)-\bigcup \bar{U}(<\xi)$. 
Consider $B=\{\bar{u}: \text{if $\xi$ is the least such that }\bar{u}\restriction \xi\in A, lh(\bar{u}) \in f(\bar{u}\restriction \xi)\}$.

We claim for $\tau>\xi$,  $B\in U(\tau)$ iff $\tau\in E$. If $\tau>\xi$, and $\bar{U}\restriction \tau \in j(B)$, then by the definition, $\xi$ is the least such that $\bar{U}\restriction \xi \in j(A)$, then $\tau$ has to be in $j(f)(\bar{U}\restriction \xi)=E$. Conversely, if $\bar{U}\restriction \tau$ is not in $j(B)$, then it must be that $\tau\not \in E$ by the same reasoning.

\end{remark}

\begin{definition}
We say the \emph{Strong Club Guessing} (SCG) holds on $S\subset \kappa$ if there exists a sequence $\la t_\alpha\subset \alpha: \alpha\in S \ra$ such that for any club $C\subset \kappa$, there exists another club $D$, satisfying that for any $\alpha\in D\cap S$, $t_\alpha$ is unbounded in $\alpha$ and $t_\alpha \subset^* C$ (namely, there exists some $\gamma<\alpha$ such that $t_\alpha-\gamma \subset C$).
\end{definition}

By taking closure, we could also assume $t_\alpha\subset \alpha$ is closed for each $\alpha<\kappa$.

\begin{proposition}\label{SCG}
In $V^{R_{\bar{U}}}$ where $\kappa$ is regular, SCG holds on the set of singular cardinals below $\kappa$.
\end{proposition}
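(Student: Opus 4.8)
The plan is to read off the guessing sequence from the Radin club $C_G$ and to verify it using the tail-club machinery (Theorems~\ref{tailclub} and \ref{tailclubconverse}), which applies here since $\kappa$ regular in $V^{R_{\bar{U}}}$ forces $cf(lh(\bar{U}))\geq\kappa^+$. First I would reduce: every singular cardinal $\alpha<\kappa$ of $V[G]$ which is \emph{not} in $C_G$ lies in the nonstationary set $\kappa\setminus C_G$, so it suffices to define $t_\alpha$ correctly for $\alpha=\kappa(\bar{w})$ with $\bar{w}\in\MS_G$ (declaring $t_\alpha$ to be an arbitrary cofinal subset of $\alpha$ otherwise) and to always choose the witnessing club $D$ inside $C_G$, above the ``bad'' points. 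For such $\alpha$ with $lh(\bar{w})\geq 1$ — which is automatic when $\alpha$ is singular — we have $C_G\cap\alpha=C_{G\restriction R_{\bar{w}}}$ and $\MS_G\cap V_\alpha=\MS_{G\restriction R_{\bar{w}}}$.

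For the guessing sequence, fix $\alpha=\kappa(\bar{w})$ with $lh(\bar{w})\geq 1$ and work in $V[G\restriction R_{\bar{w}}]$. By a standard density argument, whenever $\{\bar{v}:lh(\bar{v})>\gamma\}$ is $\bar{w}$-non-null it is met cofinally in $\alpha$ by $\MS_{G\restriction R_{\bar{w}}}$; interleaving a cofinal sequence of such $\gamma$'s with a cofinal sequence in $\alpha$, build an increasing sequence $\langle\bar{u}_\xi:\xi<\rho\rangle$ in $\MS_{G\restriction R_{\bar{w}}}$ with $\rho<\alpha$, with critical points cofinal in $\alpha$ and with $lh(\bar{u}_\xi)$ eventually above every admissible $\gamma<lh(\bar{w})$. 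Put $T_\alpha:=\{\bar{u}_\xi:\xi<\rho\}$ and $t_\alpha:=O(T_\alpha)$ (closed off if one wants $t_\alpha$ closed). The point is a pair of genericity facts of the same flavour as the ones behind Theorems~\ref{tailclub} and \ref{tailclubconverse}: (a) $\MS_{G\restriction R_{\bar{w}}}$ is almost contained in every $\bar{w}$-measure-one set of $V$, equivalently it almost avoids every $\bar{w}$-measure-zero $N\in V$ (below any condition, shrinking the top measure-one set to be disjoint from $N$ is a direct extension); and (b) if $X\in V$ is $\bar{w}$-tail-measure-one with threshold $\gamma_X$, then $X^c\cap\{\bar{v}:lh(\bar{v})>\gamma_X\}$ is $\bar{w}$-measure-zero — for $i\leq\gamma_X$ the set $\{lh>\gamma_X\}$ is already $w(i)$-null, and for $i>\gamma_X$ we have $X\in w(i)$. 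Together with the construction, (a) and (b) yield: for every $\bar{w}$-tail-measure-one $X\in V$, $T_\alpha\subseteq^* X$, hence $t_\alpha\subseteq^* O(X\cap\MS_{G\restriction R_{\bar{w}}})$.

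Now let $C\subseteq\kappa$ be a club in $V[G]$. The set of conditions forcing ``$O(\dot{\Gamma}\cap\dot{G})\subseteq\dot{C}$ for some $\bar{U}$-tail-measure-one $\dot{\Gamma}$'' is dense by Theorem~\ref{tailclub}, so fix in $V$ a $\bar{U}$-tail-measure-one $\Gamma$ with threshold $\gamma_\Gamma$ and $O(\Gamma\cap\MS_G)\subseteq C$. Since $j(\Gamma)\cap V_\kappa=\Gamma$, a direct computation shows $\Gamma':=\{\bar{w}:\Gamma\cap V_{\kappa(\bar{w})}\text{ is }\bar{w}\text{-tail-measure-one}\}$ is again $\bar{U}$-tail-measure-one (for $i>\gamma_\Gamma$, $\bar{U}\restriction i$ witnesses $\bar{U}\restriction i\in j(\Gamma')$ with threshold $\gamma_\Gamma$). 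By Theorem~\ref{tailclubconverse}, $O(\Gamma'\cap\MS_G)$ contains a club; intersecting it with $C_G$ and with a tail above the bad points yields a club $D$. If $\alpha=\kappa(\bar{w})\in D\cap S$ then $\bar{w}\in\Gamma'$, so $X:=\Gamma\cap V_\alpha$ is $\bar{w}$-tail-measure-one; by the previous paragraph $t_\alpha\subseteq^* O(X\cap\MS_{G\restriction R_{\bar{w}}})=O(\Gamma\cap\MS_G)\cap\alpha\subseteq C$, and $t_\alpha$ is unbounded in $\alpha$. This is exactly what SCG on $S$ demands.

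The delicate point, and the main obstacle, is the bookkeeping inside the construction of $t_\alpha$: one must ensure that the recursion of length $cf^{V[G\restriction R_{\bar{w}}]}(\alpha)<\alpha$ can drive the lengths $lh(\bar{u}_\xi)$ past precisely those thresholds $\gamma_X$ that can arise (after reflecting $\gamma_\Gamma$ down to $\bar{w}$), which forces one to keep careful track of how $j_{\bar{w}}$ moves thresholds and of the precise Radin computation of $cf^{V[G\restriction R_{\bar{w}}]}(\kappa(\bar{w}))$ in terms of $cf(lh(\bar{w}))$; and one must check that the $\bar{w}$ for which this fails, together with those for which $\kappa(\bar{w})$ remains regular, are confined to a nonstationary set so that the witnessing clubs $D$ can avoid them.
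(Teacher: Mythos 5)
Your argument is correct and follows essentially the same route as the paper: your construction of $T_\alpha$ from facts (a) and (b) is exactly the paper's Lemma \ref{SingularFastClub} (a fast cofinal sequence almost contained in $O(X\cap \MS_{G\restriction R_{\bar{w}}})$ for every $\bar{w}$-tail-measure-one $X\in V$), and your verification via Theorem \ref{tailclub}, reflection of tail-measure-one-ness to $\Gamma'$, and Theorem \ref{tailclubconverse} matches the paper's proof step for step. The ``delicate point'' you flag at the end is handled in the paper by precisely the interleaving you describe (a diagonal intersection of the witnessing sets indexed along a short cofinal sequence in $\kappa(\bar{w})$), so it is routine bookkeeping rather than a genuine obstacle.
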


\begin{lemma}\label{SingularFastClub}
Let $\bar{U}$ be a measure sequence without repeat points and $cf(lh(\bar{U}))\leq \kappa$. Then in $V[G]$, there exists a cofinal subset $c$ of $\kappa$ such that for any $\bar{U}$-tail-measure-one set $B\in V$, $c\subset^* O(B\cap \MS_G)$.
\end{lemma}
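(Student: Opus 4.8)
The plan is as follows. Write $\delta = cf(lh(\bar U)) \le \kappa$. Since $\bar U$ has no repeat point, the novel points are unbounded in $lh(\bar U)$, so I first fix in $V$ an increasing sequence $\la \gamma_i : i < \delta \ra$ cofinal in $lh(\bar U)$ with every $\gamma_i$ novel, and for each $i$ a set $D_i \in U(\gamma_i) \setminus \bigcup_{\eta < \gamma_i} U(\eta)$; thus $D_i^c \in U(\eta)$ for all $\eta < \gamma_i$ while $D_i \in U(\gamma_i)$, and I make no claim about $D_i$ versus $U(\eta)$ for $\eta > \gamma_i$. A routine density argument shows $O(D_i \cap \MS_G)$ is cofinal in $\kappa$ for each $i$: given $p = p_0 \fr (\bar U, A^p)$ and $\alpha < \kappa$, the set $A^p \cap D_i \setminus V_{\alpha+1}$ lies in $U(\gamma_i)$, hence is nonempty, so $p$ can be one-step extended by one of its members.

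Next I build, in $V[G]$, a cofinal sequence of measure sequences that threads the $D_i$'s in order. By the standard singularization analysis for $cf(lh(\bar U)) \le \kappa$ (cf.\ the arguments of \cite{MR2768695}), $\theta := cf^{V[G]}(\kappa)$ matches $cf(lh(\bar U))$ closely enough that in $V[G]$ there is an increasing cofinal map $g : \theta \to \delta$ (with $\theta = \delta$ when $\delta < \kappa$). Fix such a $g$ and an increasing sequence $\la \alpha_\xi : \xi < \theta \ra$ cofinal in $\kappa$, and let $\bar w_\xi$ be the least $\bar w \in \MS_G$ with $\bar w \in D_{g(\xi)}$ and $\kappa(\bar w) > \alpha_\xi$ (it exists by the first paragraph). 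Put $c := \{ \kappa(\bar w_\xi) : \xi < \theta \}$; since $\kappa(\bar w_\xi) > \alpha_\xi$, $c$ is cofinal in $\kappa$.

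The heart of the matter is to show $c \subseteq^* O(B \cap \MS_G)$ for each $\bar U$-tail-measure-one $B \in V$. Fix such a $B$ with witness $\gamma_B$, and let $i_B < \delta$ be least with $\gamma_{i_B} > \gamma_B$. The key claim is that
\[
S_B := \{ \bar v \in \MS : \bar v \notin D_i \cap B^c \text{ for all } i \text{ with } i_B \le i < \kappa(\bar v) \} \in \bigcap \bar U .
\]
To see this, compute in $M$: $\bar U \restriction \tau \in j(S_B)$ iff for every $i$ with $i_B \le i < \kappa$ one has $D_i \notin U(\tau)$ or $B \in U(\tau)$; for $\tau > \gamma_B$ this holds since $B \in U(\tau)$, and for $\tau \le \gamma_B$ it holds since then $\tau < \gamma_{i_B} \le \gamma_i$, so $D_i \notin U(\tau)$. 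Hence $S_B \in U(\tau)$ for every $\tau < lh(\bar U)$, i.e.\ $S_B \in \bigcap \bar U$; shrinking within $\bigcap \bar U$, I may also assume $S_B$ is self-coherent. Therefore the set of $q$ with $A^q \subseteq S_B$ is dense, so fix $q \in G$ with $A^q \subseteq S_B$; by the standard structure of one-step extensions in Radin forcing, every $\bar w \in \MS_G$ whose critical point exceeds those occurring in the lower part $q_0$ lies in $A^q \subseteq S_B$. For all large enough $\xi < \theta$ we have $g(\xi) \ge i_B$, $\kappa(\bar w_\xi) > \delta$, and $\kappa(\bar w_\xi)$ past $q_0$; then $\bar w_\xi \in D_{g(\xi)} \cap S_B$ with $i_B \le g(\xi) < \kappa(\bar w_\xi)$, so the definition of $S_B$ gives $\bar w_\xi \notin D_{g(\xi)} \cap B^c$, whence $\bar w_\xi \in B$. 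Thus $\{ \xi : \bar w_\xi \notin B \}$ is bounded in $\theta$ and $c \subseteq^* O(B \cap \MS_G)$.

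The step I expect to cost the most is the cofinality bookkeeping of the second paragraph — pinning down $cf^{V[G]}(\kappa)$ relative to $cf(lh(\bar U))$ precisely enough to get both the cofinal map $g$ and the cofinality of $c$; this is exactly where the hypothesis $cf(lh(\bar U)) \le \kappa$ is used, via the cited singularization analysis. By contrast, the computation $S_B \in \bigcap \bar U$ is the conceptual key but short, and ``$A^q \subseteq S_B$ forces a tail of $\MS_G$ into $S_B$'' is a routine property of Radin conditions.
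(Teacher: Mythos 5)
Your proof is correct, and its core mechanism is the same as the paper's: fix in $V$ a family of sets concentrating on cofinally many of the measures $U(\gamma)$, thread them through $\MS_G$ to produce the fast cofinal set, and observe that for a tail-measure-one $B$ the relevant ``bad'' sets ($D_i\cap B^c$) are $\bar U$-null and hence avoided by a tail of $\MS_G$ --- your computation $S_B\in\bigcap\bar U$ is exactly this nullity, packaged as a diagonal union so that all $i\ge i_B$ are handled simultaneously. The implementations differ in two respects. First, the paper works with sets $A_i$ satisfying $A_i\in U(\xi)$ iff $\xi>\nu_i$ (exact tail sets), whereas you only require $D_i\in U(\gamma_i)\setminus\bigcup_{\eta<\gamma_i}U(\eta)$; your hypothesis is weaker, and the $S_B$ argument shows it suffices. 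Second, and more substantively: the paper chooses its $n$-th point inside an intersection of fewer than $\kappa$ many of the $A_i$ (still tail-measure-one by $\kappa$-completeness), so the only input needed from the singularization analysis is that $cf^{V[G]}(\kappa)<\kappa$; you instead thread one $D_{g(\xi)}$ per step, so you need an increasing cofinal $g:\theta\to\delta$ in $V[G]$, i.e.\ the finer fact that $cf^{V[G]}(\delta)=cf^{V[G]}(\kappa)$. That identity does follow from the cited analysis, but it is an extra dependency that the paper's diagonal-intersection bookkeeping avoids. One small slip: when $\delta=\kappa$ the clause ``$\kappa(\bar w_\xi)>\delta$'' can never hold; what you actually need is $g(\xi)<\kappa(\bar w_\xi)$, which you should secure by choosing $\alpha_\xi\ge g(\xi)$ in the second paragraph.
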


\begin{proof}
Assume $cf(lh(\bar{U}))=\lambda\leq \kappa$. Let $\la \nu_i: i<\lambda \ra$ be cofinal in $lh(\bar{U})$ such that for each $i<\lambda$, there is some $A_i\subset V_\kappa \cap \MS$ such that $A_i\in U(\xi)$ iff $\xi>\nu_i$. By \cite{MR2768695}, we know in $V[G]$, $cf(\kappa)=\nu<\kappa$, as witnessed by some $\la \kappa_n: n\in \nu\ra$. Recursively define $\kappa_n'$ such that $\kappa'_n$ the least element $\geq \kappa_n$ such that there is some $\bar{w}\in \MS_G\cap \bigcap_{i<\lambda, \nu_i<\kappa_n} A_i$ with $\kappa_n'=\kappa(\bar{w})$. Hence, the sequence $\la \kappa_n': n\in \nu\ra$ has the property that it is almost contained in $O(\MS_G\cap A_i)$ for any $i<\lambda$. Suppose $B$ is any other $\bar{U}$-tail-measure-one set, then there exists some large enough $i<\lambda$ such that $A_i\cap B^c$ is $\bar{U}$-null. As a result, in $V[G]$, a tail of $\MS_G$ avoids $A_i\cap B^c$. Therefore, $A_i\cap \MS_G \subset^* B\cap \MS_G$. Therefore, $\la \kappa_n': n\in \nu\ra$ is almost contained in $O(B\cap \MS_G)$.
\end{proof}

\begin{proof}[Proof of Proposition \ref{SCG}]
Let $G\subset R_{\bar{U}}$ be generic over $V$. In $V[G]$, if $\bar{w}\in G$ is such that $cf(\kappa(\bar{w}))<\kappa(\bar{w})$, then we let $c_{\kappa(\bar{w})}$ be the set as given by Lemma \ref{SingularFastClub} applied to $R_{\bar{w}}$. We claim this sequence is strong club guessing on the set of singulars below $\kappa$. Given any club $D\subset \kappa$, by Theorem \ref{tailclub} we may find a tail measure one set $B$, witnessed by $\gamma<lh(\bar{U})$ such that $O(B\cap \MS_G)\subset D$. For any $\bar{w}$ such that $B\cap V_{\kappa(\bar{w})}$ is tail measure one with respect to $\bar{w}$, Lemma \ref{SingularFastClub} implies that $c_{\kappa(\bar{w})}\subset^* O(B\cap \MS_G)$. Since $\{ \bar{w}: B\cap V_{\kappa(\bar{w})}$ is tail measure one with respect to $\bar{w} \}$ is tail measure one with respect to $\bar{U}$, we can apply Theorem \ref{tailclubconverse} to get the conclusion as desired.
\end{proof}

Even though there are many instances of club guessing principles valid in ZFC (see \cite{MR1318912}), SCG at singulars does not necessarily hold in ZFC. For example, in the forcing extension where we add $\lambda^+$ many Cohen subsets of $\lambda$, SCG on the singulars below $\lambda$ fails. We describe another scenario of a different nature.

\begin{proposition}\label{aboveStrongCompact}
Suppose $\kappa$ is a strongly compact cardinal and $\lambda>\kappa$ is a regular cardinal, then SCG fails on the set $S=\{\alpha\in \lambda: cf(\alpha)<|\alpha|\}$.
\end{proposition}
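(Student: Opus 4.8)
The plan is to use the strong compactness of $\kappa$ to "kill" any candidate sequence $\la t_\alpha : \alpha \in S\ra$ by producing a club $C \subseteq \lambda$ that diagonally defeats it. Recall that $S = \{\alpha < \lambda : \operatorname{cf}(\alpha) < |\alpha|\}$; since $\lambda > \kappa$ is regular, $S$ is a stationary subset of $\lambda$ containing, in particular, all $\alpha < \lambda$ with $\kappa \leq \alpha$ and $\operatorname{cf}(\alpha) < \kappa$. Fix a fine, $\kappa$-complete ultrafilter $\mathcal{U}$ on $P_\kappa(\lambda)$, with associated elementary embedding $i : V \to N \cong \operatorname{Ult}(V,\mathcal{U})$; the key property is that $\operatorname{crit}(i) = \kappa$, $i[\lambda] \in N$, and $i[\lambda] \subseteq \xi$ for some $\xi < i(\lambda)$ with $\operatorname{cf}^N(\xi) < i(\kappa) \leq |\xi|^N$, so that $\xi \in i(S)$. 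Given a purported SCG-sequence $\la t_\alpha : \alpha \in S\ra$, I would look at $i(\la t_\alpha : \alpha \in S\ra)_\xi =: t_\xi \subseteq \xi$, which by elementarity is a candidate guessing set at $\xi$, and build a club $C \subseteq \lambda$ whose image behaves badly against it.

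First I would fix an increasing continuous enumeration and, for each $\alpha \in S$ with $t_\alpha$ unbounded in $\alpha$, observe $\operatorname{otp}(t_\alpha) = \operatorname{cf}(\alpha) < |\alpha| \leq \alpha$, so $t_\alpha$ is "thin". The heart of the argument is a diagonalization: enumerate a dense enough family of potential clubs, or rather argue directly. Suppose toward a contradiction that $\la t_\alpha : \alpha \in S\ra$ is SCG. Consider the club $C = \{\alpha < \lambda : \alpha \text{ is a limit point of } \lambda \text{ closed under some fixed Skolem functions}\}$ — more precisely I would choose $C$ so that for club-many $\alpha$, $t_\alpha \not\subseteq^* C$. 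The mechanism: using $\kappa$-completeness of $\mathcal{U}$ and fineness, for $\mathcal{U}$-almost every $x \in P_\kappa(\lambda)$ with $\alpha_x = \sup(x \cap \lambda) \in S$, one can show that any fixed $t_{\alpha_x}$ of order type $< \kappa$ either fails to be unbounded in $\alpha_x$ or can be "avoided past a tail" by a suitably generic-looking club; pushing this through $i$ and reflecting gives a single club $C$ defeating the sequence. Concretely: in $N$, build a club $D^* \subseteq i(\lambda)$ with $D^* \cap \xi$ chosen so that $t_\xi \not\subseteq^* D^*$ (possible since $t_\xi$ has order type $< i(\kappa)$, hence is not "fast"), arrange $D^* = i(C)$ for some club $C \in V$ by a reflection/absorption argument, and then SCG applied to $C$ would yield a club $E \subseteq \lambda$ with $t_\alpha \subseteq^* C$ for all $\alpha \in E \cap S$; applying $i$ and using $\xi \in i(E) \cap i(S)$ (arranged via $\operatorname{cf}^N \xi < i(\kappa)$ and $i[E]$ being cofinal club-ish in $\xi$) gives $t_\xi \subseteq^* i(C) = D^*$, a contradiction.

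The main obstacle I anticipate is the reflection step — getting the bad club $D^*$ in $N$ to actually be (or contain, modulo a club) the image $i(C)$ of a ground-model club, and simultaneously guaranteeing $\xi \in i(E)$ for the club $E$ returned by SCG. The cleanest route is probably to not fix $D^*$ first but instead to argue contrapositively and locally: for each $\alpha \in S$ in a club, use that $\operatorname{cf}(\alpha) < |\alpha|$ together with a "guessing-killing" lemma (of the flavor that a single thin set cannot be $\subseteq^*$ every member of a rich enough family of clubs through $\alpha$) — and this local statement is exactly what strong compactness lets you reflect from the $P_\kappa(\lambda)$-level, since the relevant clubs through $\alpha = \alpha_x$ are coded by elements below $\kappa$-many parameters. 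I would isolate a clean sublemma: \emph{if $\operatorname{cf}(\alpha) < \kappa \leq |\alpha|$ and $t \subseteq \alpha$ with $\operatorname{otp}(t) < \kappa$, then there is a club $C \subseteq \alpha$ with $t \not\subseteq^* C$, uniformly definable from a well-order of $|\alpha|$}; then lift this via $i$ to $\xi$ and pull the witnessing club back to $\lambda$. Verifying the uniformity needed for the pullback, and checking that $\xi$ lands in the image of whatever club SCG hands back, are the routine-but-fiddly points; everything else is standard strong-compactness bookkeeping.
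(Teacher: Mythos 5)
Your high-level strategy matches the paper's: take a strong-compactness embedding $i\colon V\to N$, let $\xi=\sup i[\lambda]$, check $\xi\in i(S)$, and aim to produce a single club $C\subseteq\lambda$ in $V$ with $t_\xi:=i(\langle t_\alpha:\alpha\in S\rangle)_\xi\not\subseteq^* i(C)$; since $\xi\in i(E)\cap i(S)$ for every club $E\subseteq\lambda$, this refutes SCG. But there is a genuine gap exactly at the point you flag as the ``main obstacle'': you never construct such a $C$. Your sublemma only produces, in $N$, \emph{some} club $C'\subseteq\xi$ with $t_\xi\not\subseteq^* C'$ --- which is trivial --- whereas what is needed is a club of the very special form $i(C)\cap\xi$ for a ground-model club $C\subseteq\lambda$. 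Pulling $C'$ back (to $i^{-1}[C']$ or its closure) and applying $i$ again does not recover $C'$: the set $i(C)\cap\xi$ contains many points outside $i[\lambda]$, and nothing rules out a priori that $t_\xi$ is almost contained in $i(C)$ for \emph{every} ground-model club $C$. Uniform definability of $C'$ from a well-order does not help either, since SCG quantifies over clubs in $\lambda$, not over $C$-sequences, so you still must assemble one club in $V$ that works.

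The missing idea --- the actual content of the paper's proof --- is to build $C$ from $t_\xi$ itself via an accumulation-point trick. After arranging each $t_\alpha$ to be closed, set $A=i^{-1}\bigl(\mathrm{acc}(t_\xi\cap i[\lambda])\bigr)$, check that $A$ is unbounded (and $<\kappa$-closed) in $\lambda$, and let $C=\overline{A}$ be its closure. If SCG held, the club $D$ returned for $C$ satisfies $\xi\in i(D)\cap i(S)$, so $t_\xi\subseteq^* i(\overline{A})$ and hence $t_\xi\cap i[\lambda]\subseteq^* i[\overline{A}]$. This is then refuted: for $\nu=i(\eta)\in t_\xi\cap i[\lambda]$ with $\eta\in\overline{A}$, the next element $\nu'$ of $t_\xi\cap i[\lambda]$ above $\nu$ cannot lie in $i[\overline{A}]$, for if $\nu'=i(\eta')$ with $\eta'\in\overline{A}$, then $\eta'\notin A$ (as $\nu'$ is a non-accumulation point of $t_\xi\cap i[\lambda]$), so $\eta'$ is a limit of points of $A$, and any $\zeta\in A\cap(\eta,\eta')$ gives $i(\zeta)\in t_\xi\cap i[\lambda]$ strictly between $\nu$ and $\nu'$, contradicting the choice of $\nu'$. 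Without this (or an equivalent) construction of the defeating club, your argument does not close.
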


\begin{proof}
Suppose for the sake of contradiction that $\bar{t}=\langle t_\alpha: \alpha\in S\rangle$ is a strong club guessing sequence. We may assume $t_\alpha$ is a club in $\alpha$ for each $\alpha\in S$. Let $j: V\to M$ witness that $\kappa$ is $\lambda$-compact. In particular, ${}^\kappa M \subset M$ and $\sup j'' \lambda =_{def} \delta < j(\lambda)$. To see that $\delta\in j(S)$, since $j$ witnesses that $\kappa$ is $\lambda$-compact, there exists some $A\in M$ such that $j''\lambda\subset A$ and $M\models |A|<j(\kappa)<\delta$. In particular, $M\models \delta$ is singular. Hence $\delta\in j(S)$. Let $t=j(\bar{t})_\delta$ and $A=j^{-1} acc(t\cap j''\lambda)$. Recall for a set $Z$, $acc(Z)=\{\delta\in Z: Z\cap \delta \text{ is unbounded in }\delta\}$. It is not hard to see the set $A$ is $<\kappa$-closed and unbounded. Consider $\bar{A}$, the closure of $A$. By the hypothesis, there exists a club $D\subset \lambda$, such that for any $\alpha\in D \cap S$, $t_\alpha\subset^* \bar{A}$. By elementarity of $j$, we know that $\delta\in j(D \cap S)$. 
%Observe that $\delta$ is a singular cardinal in $M$, so $\delta\in j(S)$.
As a result, $t\subset^* j(\bar{A})$. This in particular implies $t\cap j''\lambda \subset^*  j(\bar{A})\cap j'' \lambda = j'' \bar{A}$.

We check that for any $\nu\in t\cap j''\lambda$, there is some $\nu'\geq \nu$ in $t\cap j'' \lambda$ that does not belong to $j'' \bar{A}$. This clearly gives a contradiction as desired. Given such a $\nu$, if it already does not belong to $j''\bar{A}$, then we are done. Suppose there is some $\xi\in \bar{A}$, $j(\xi)=\nu$. Let $\nu'$ be $\min \left( (t\cap j''\lambda) - (\nu+1)\right)$. In particular, $\nu'$ is in $nacc(t\cap j''\lambda)$. Observe that $\nu'\not\in j'' \bar{A}$. To see this, suppose for contradiction that there is $\xi'\in \bar{A}$ such that $j(\xi')=\nu'$. Note that $\xi'>\xi$. By the definition of $A$, $\xi'\in \bar{A} - A$. We can then find $\eta\in (\xi, \xi')\cap A$. But then $\nu<j(\eta)<\nu'$ and $j(\eta)\in t\cap j''\lambda$, contradicting with the choice of $\nu'$.
\end{proof}

\begin{remark}
Proposition \ref{aboveStrongCompact} can be used to separate the $\diamondsuit^+$-principles from the SCG principles. For example, \cite[Question 1]{MR2194236} asks if for a regular uncountable cardinal $\mu$, $\diamondsuit^+(\mu)$ implies SCG on $\mu$. By \cite[Theorem 22]{MR1838355}, if $2^\lambda=\lambda^+$ and $2^{\lambda^+}=\lambda^{++}$, then there is a $\lambda^+$-directed closed and $\lambda^{++}$-c.c poset that forces $\diamondsuit^+(\lambda^+)$. Therefore, relative to the existence of a supercompact cardinal $\kappa$, we can produce a model (using the technique by Laver \cite{MR0472529}) where $\kappa$ remains supercompact and $\lambda>\kappa$ satisfies $\diamondsuit^+(\lambda^+)$ holds. In this model, SCG on $\lambda^+$ fails by Proposition \ref{aboveStrongCompact}. There are two things worth noting: 
    \begin{enumerate}
        \item Large cardinals should not be needed to separate $\diamondsuit^+$ from SCG. For example, \cite{MR2963019} shows it is consistent relative to ZFC that $\diamondsuit^+(\omega_1)$ holds and SCG on $\omega_1$ fails. 
        \item If $\lambda$ is regular with $\lambda^{<\lambda}=\lambda$, then \cite{MR2151585} proves there is a $<\lambda$-closed and $\lambda^+$-c.c poset that adds a SCG-sequence on $\lambda$.
    \end{enumerate}
\end{remark}

The validity of SCG on regulars is constrained by the compactness principle to be discussed in the next section.

\section{A weak consequence of ineffability does not imply the diamond principle}\label{amenable}
To relate the compactness principle we will consider in this section with standard large cardinals, we first supply a characterization of ineffable cardinals in terms of club sequences. The following lemma is essentially due to Todorcevic \cite{MR2355670}, who proved the corresponding version for weakly compact cardinals. Recall $\kappa$ is \emph{ineffable} iff for any $\la d_\alpha\subset \alpha: \alpha<\kappa\ra$, there exists $A\subset \kappa$ such that $\{\alpha<\kappa: A\cap \alpha=d_\alpha\}$ is a stationary subset of $\kappa$.

\begin{definition}
A $C$-sequence on $\kappa$ is a sequence $\langle C_\alpha: \alpha<\kappa\rangle$ where each $C_\alpha$ is a club subset of $\alpha$ for any $\alpha<\kappa$.
\end{definition}

\begin{lemma}
For a strongly inaccessible $\kappa$, we have that: 
$\kappa$ is ineffable iff for any $C$-sequence on $\kappa$ $\langle C_\alpha\subset \alpha: \alpha<\kappa\ra$, there exists a club $D\subset \kappa$, satisfying that $\{\alpha<\kappa: D\cap \alpha = C_\alpha\}$ is stationary. 
\end{lemma}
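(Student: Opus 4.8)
The statement is an "iff" between ineffability of a strongly inaccessible $\kappa$ and a club-sequence reflection principle. The plan is to prove each direction separately.

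For the forward direction, suppose $\kappa$ is ineffable and let $\langle C_\alpha : \alpha < \kappa\rangle$ be a $C$-sequence. I would code each $C_\alpha$ by a subset $d_\alpha \subseteq \alpha$ using a fixed pairing/coding apparatus on $\kappa$ (available since $\kappa$ is strongly inaccessible, so $|V_\alpha| = |\alpha|$ for club-many $\alpha$, and one can arrange $d_\alpha$ to literally decode to $C_\alpha$ on a club). Apply ineffability to $\langle d_\alpha : \alpha < \kappa\rangle$ to get $A \subseteq \kappa$ with $S = \{\alpha : A \cap \alpha = d_\alpha\}$ stationary. Then decode $A$ to get a set $D$ and argue that $D$ is a club (the decoding of $A$ restricted to the club where the coding is coherent agrees with each $C_\alpha$ for $\alpha \in S$, forcing $D \cap \alpha = C_\alpha$; closure and unboundedness of $D$ follow because $D \cap \alpha = C_\alpha$ is a club in $\alpha$ for stationarily many $\alpha$, and one can intersect with the club of coherence points). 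The mild technical care here is ensuring the decoding of $A$ is genuinely a club subset of $\kappa$, not just of each $\alpha \in S$; this is handled by replacing $D$ with its set of limit points or by the standard argument that a set which is an initial segment of stationarily many clubs is itself club.

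For the reverse direction, assume the club-sequence reflection principle and let $\langle d_\alpha \subseteq \alpha : \alpha < \kappa\rangle$ be arbitrary; I want $A$ with $\{\alpha : A \cap \alpha = d_\alpha\}$ stationary. The trick, following Todorcevic's argument for weak compactness, is to turn the $d_\alpha$ into a $C$-sequence: for each $\alpha$ of uncountable cofinality (or just each limit $\alpha$), let $C_\alpha$ be a club subset of $\alpha$ whose elements "encode" $d_\alpha \cap \beta$ at each point $\beta \in C_\alpha$ — e.g., take $C_\alpha$ to be a club of $\beta < \alpha$ such that $\beta$ is closed under a pairing function and $d_\alpha \cap \beta$ is coded into the structure of $C_\alpha \cap \beta$ in a recoverable way, using that $\kappa$ is inaccessible so $2^{|\beta|} < \kappa$. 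Apply the reflection principle to get a club $D$ with $\{\alpha : D \cap \alpha = C_\alpha\}$ stationary; then $D$ itself encodes a single set $A \subseteq \kappa$ via the same decoding, and on the stationary set of reflection points one reads off $A \cap \alpha = d_\alpha$. One must also check $\kappa$ remains inaccessible is genuinely used (to have enough room for the coding) and that the resulting set of good $\alpha$ is still stationary after intersecting with the club of coding-coherence points.

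**Main obstacle.** The delicate point, and the heart of Todorcevic's original argument, is the reverse direction: designing the $C$-sequence $\langle C_\alpha\rangle$ so that (i) each $C_\alpha$ is a genuine \emph{club} in $\alpha$ (not merely cofinal), (ii) from $C_\alpha$ one can uniformly and correctly decode $d_\alpha$, and (iii) the decoding is "local and coherent" enough that when $D \cap \alpha = C_\alpha$ for stationarily many $\alpha$, the globally decoded set $A$ satisfies $A \cap \alpha = d_\alpha$ for stationarily many of those $\alpha$ — i.e., the decoding commutes with restriction. Making (i)–(iii) simultaneously work requires using strong inaccessibility to fit a copy of $\mathcal{P}(\beta)$-information below the next coherence point, and arranging the club $C_\alpha$ to "grow slowly" so that $C_\alpha \cap \beta$ only ever commits to $d_\alpha \cap \beta'$ for $\beta' < \beta$. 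I expect the author's proof to push this coding through carefully, perhaps isolating the coherence club and then invoking a pressing-down or intersection argument to keep the reflection set stationary.
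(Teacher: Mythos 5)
Your forward direction is correct but overbuilt: since each $C_\alpha$ is already a subset of $\alpha$, you can apply ineffability directly to $\langle C_\alpha:\alpha<\kappa\rangle$ with no coding at all; the resulting $A$ with $\{\alpha: A\cap\alpha=C_\alpha\}$ stationary is automatically closed and unbounded (any limit point $\beta$ of $A$ lies in $C_\alpha=A\cap\alpha$ for any reflection point $\alpha>\beta$). The paper treats this direction as trivial for exactly this reason.

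The genuine gap is in the reverse direction, which is the entire content of the lemma. You correctly identify that one must code $\langle d_\alpha\rangle$ into a $C$-sequence so that exact agreement of initial segments of the clubs forces coherence of the decoded sets, and you correctly flag this as the main obstacle --- but you then leave it unresolved (``I expect the author's proof to push this coding through carefully''). A description of the properties a coding would need is not a construction of one, and the coherence requirement (iii) in your list is precisely where a naive coding fails: from $D\cap\alpha=C_\alpha$ and $D\cap\beta=C_\beta$ you get $C_\alpha=C_\beta\cap\alpha$, and you must arrange that this literal equality of clubs yields $d_\alpha=d_\beta\cap\alpha$. The paper's resolution is a tree argument: form the tree $T'$ of initial segments $\{d_\beta\restriction\alpha:\alpha\le\beta\}$ (which does not split at limit levels), use inaccessibility to re-index its $\xi$-th level as an interval $[\delta_\xi,\delta_{\xi+1})$ of ordinals with $d_\xi$ sitting at the left endpoint, and take $C_\alpha$ to be the closure of the set of $T$-predecessors of $\alpha$ (for $\alpha$ in the club where $\delta_\alpha=\alpha$). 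Then for two reflection points $\alpha<\beta$, the equality $C_\alpha=C_\beta\cap\alpha$ forces $\alpha$ and the unique level-$\alpha$ predecessor of $\beta$ to have the same predecessors at every successor level, hence to coincide by non-splitting at limits; so the reflection points form a chain, i.e., a branch meeting stationarily many levels at their minima, and the union of the corresponding $d_\xi$'s is the averaging set. Without this (or an equivalent explicit coding), your reverse direction does not go through.
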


\begin{proof}
We only prove the non-trivial ``if'' direction. Namely, suppose every $C$-sequence on inaccessible $\kappa$ admits a club $D\subset \kappa$ such that $\{\alpha<\kappa: D\cap \alpha =C_\alpha\}$ is stationary, then $\kappa$ is ineffable.

Let $T$ be a tree of height $\kappa$ and the $\xi$-th level of the tree is $T(\xi)=[\delta_\xi, \delta_{\xi+1})$, where $\langle \delta_\xi: \xi<\kappa\rangle$ is an increasing continuous sequence of ordinals below $\kappa$. We will also assume that $T$ \emph{does not split at limit levels}, in the sense that for any limit $\xi<\kappa$, and $\alpha_0, \alpha_1\in T(\xi)$, if $\{\eta: \eta<_T \alpha_0\}=\{\eta: \eta<_T \alpha_1\}$, then $\alpha_0=\alpha_1$.

We claim that there is a branch $d$ through $T$ such that $\{\xi<\kappa: \delta_\xi\in d\}$ is stationary in $\kappa$. Note that there is a club $C$ consisting limit $\xi$ such that $\delta_\xi =\xi$. Consider the following $C$-sequence $\bar{C}$: for any $\xi<\kappa$, if $\xi\in \lim C$, then we let $C_\xi$ be the closure of $E_\xi=_{def}\{\eta<\xi: \eta<_T \xi\}$ and if $\xi\not\in \lim C$, then $C_\xi = (\max C\cap \xi, \xi)$. Recall the hypothesis stated at the beginning of the proof. We can then find a club $D\subset \kappa$ such that $S=\{\xi<\kappa: D\cap \xi = C_\xi\}$ is stationary in $\kappa$. We claim that $S\cap \lim C$ forms a branch as desired. Suppose $\alpha<\beta \in S\cap \lim C$, let $\alpha'\in T(\alpha)$ be the unique node such that $\alpha'<_{T} \beta$. We need to show that $\alpha=\alpha'$. By the fact that $T$ does not split at limit levels, we only need to show that $\alpha$ and $\alpha'$ have the same set of $T$-predecessors. Because $T$ is a tree, it suffices to show there are arbitrarily large $\eta<\alpha=\delta_\alpha$ with $\eta<_T \alpha, \alpha'$. Given $\xi<\alpha$, note that $D\cap T(\xi+1)$ has a unique element. To see this, by the hypothesis we have $D\cap \alpha = C_\alpha$, as a result $D\cap T(\xi+1)$ consists of the unique node that is $T$-below $\alpha$, since $D\cap T(\xi+1)=C_\alpha\cap T(\xi+1)=E_\alpha \cap T(\xi+1)$. Let $\eta\in D\cap T(\xi+1)$. The same argument shows that $\eta<_T \alpha'$. We are done.

It is left to see the principle as above implies $\kappa$ is ineffable. Given a list $\langle d_\alpha \in 2^{\alpha}: \alpha<\kappa\rangle$, consider the tree $T'=\{d_\beta\restriction \alpha: \alpha\leq \beta\}$. Note $T'$ does not split on limit levels. Since $\kappa$ is strongly inaccessible, it is possible to find an increasing continuous sequence $\langle \delta_\xi: \xi<\kappa\rangle$ such that there is a bijection $b_\xi: [\delta_\xi, \delta_{\xi+1})\leftrightarrow T'(\xi)$ with $b_\xi(\delta_\xi)= d_\xi$ for each $\xi<\kappa$. Let $T$ be the pull-back of $T'$, namely, for any $\alpha_0, \alpha_1$ with $\xi_0, \xi_1< \kappa $ be such that $\alpha_i\in [\delta_{\xi_i},\delta_{\xi_i+1})$ for $i<2$, $\alpha_0 <_T \alpha_1$ iff $b_{\xi_0}(\alpha_0)<_{T'} b_{\xi_1}(\alpha_1)$. It can be easily verified that $T$ is of the form described above. Then there is a branch $d$ through $T$ such that $S=\{\xi<\kappa: \delta_\xi\in d\}$ is stationary. Consider $d'=\{b_\xi(\delta_\xi)=_{def} d_\xi : \xi\in S\}$. Then for any $\xi_0<\xi_1\in S$, $\delta_{\xi_0}<_T \delta_{\xi_1}$, which implies $b_{\xi_0}(\delta_{\xi_0})=d_{\xi_0}<_{T'} b_{\xi_1}(\delta_{\xi_1})=d_{\xi_1}$, hence $d_{\xi_0}=d_{\xi_1}\restriction \xi_0$. This implies that $\kappa$ is ineffable.

\end{proof}

The following notion is related to constructions of distributive Aronszajn trees, which strenghtens the principle $\otimes_{\overrightarrow{C}}$ from \cite[p. 134]{MR1318912}.

\begin{definition}[\cite{MR3914943}]
A $C$-sequence on $\kappa$ $\langle C_\alpha: \alpha<\kappa\ra$ is said to be \emph{amenable} if there is no club $D\subset \kappa$, such that $\{\alpha<\kappa: D\cap \alpha \subset C_\alpha\}$ is stationary.
\end{definition}

Therefore, the assertion that \emph{there is no amenable C-sequence on $\kappa$} is a weakening of ineffability. To be more explicit, the compactness principle states: 

\emph{for any $C$-sequence $\la C_\alpha\subset \alpha: \alpha<\kappa\ra$, there exists a club $D\subset \kappa$, such that $\{\alpha: D\cap \alpha \subset C_\alpha\}$ is a stationary subset of $\kappa$}.

\begin{remark}
That ``there is no amenable C-sequence on $\kappa$'' is a non-trivial compactness principle. For example, it implies that the \emph{diagonal stationary reflection principle at $\kappa$}, which means: for any $\kappa$-sequence of stationary subsets of $\kappa$ $\la S_i: i<\kappa\ra$, there is some $\delta<\kappa$ such that for any $i<\delta$, $S_i\cap \delta$ is stationary. However, it is not a consequence of $\kappa$ being weakly compact, while the diagonal stationary reflection principle at $\kappa$ is. For instance, in $L$, there is no amenable C-sequence on $\kappa$ iff $\kappa$ is ineffable.
\end{remark}

The following seems open: 
\begin{question}
Is ``there exists a cardinal carrying no amenable C-sequence'' equiconsistent with ``there exists an ineffable cardinal''?
\end{question}

\begin{remark}
To relate to the Strong Club Guessing principle discussed in the previous section, notice that if there is no amenable C-sequence on $\kappa$, then the Strong Club Guessing principle on the regulars below $\kappa$ fails.
\end{remark}

In \cite{MR3960897}, it was shown that relative to the existence of large cardinals, it is consistent that there is a strongly inaccessible cardinal $\kappa$ where the diagonal stationary reflection principle holds yet $\diamondsuit(\kappa)$ fails. We strengthen the compactness principle in this model to ``there is no amenable C-sequence on $\kappa$''.

\begin{theorem}\label{stationarilytrivial}
If $\bar{U}$ is a measure sequence on $\kappa$ such that $cf(lh(\bar{U}))\geq \kappa^{++}$, then in $V^{R_{\bar{U}}}$, there is no amenable C-sequence on $\kappa$. 
\end{theorem}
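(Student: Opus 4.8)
The goal is to show that in $V^{R_{\bar U}}$, every $C$-sequence $\langle C_\alpha : \alpha<\kappa\rangle$ admits a club $D\subseteq\kappa$ such that $\{\alpha<\kappa : D\cap\alpha\subseteq C_\alpha\}$ is stationary. I would first reduce to the case where the $C$-sequence has a name that is "locally read off" the measure sequence. By Lemma~\ref{RepresentationLemma0} (applied to a name for $\langle C_\alpha\rangle$), we may fix $q = q_0\fr(\bar U, A^q)$ and a function $f\in V$ such that for $\bar w\in A^q$, $f(\bar w)$ is an $R_{\bar w}$-name for a club subset of $\kappa(\bar w)$ with $q\fr\bar w\Vdash f(\bar w)=\dot C_{\kappa(\bar w)}$. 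By iterating the lemma one more level (as in the proof that LRP implies almost ineffability), one obtains a single function $h\in V$ that simultaneously reads off, below each $\bar w$, the values $f(\bar w)\cap\kappa(\bar v)$ for $\bar v$ below $\bar w$; this is the bookkeeping that lets us build $D$ coherently.

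**Construction of $D$.** The key point is that $cf(lh(\bar U))\geq\kappa^{++}$ gives enough "room" to diagonalize. Working in $M$ via $j:V\to M$: one wants a subset $D$ of $\MS\cap V_\kappa$ (which in $V[G]$ will generate a club, by Theorem~\ref{tailclubconverse}, since it should be $\bar U$-tail-measure-one — use $cf(lh(\bar U))\geq\kappa^+$ and Theorem~\ref{tailclub} ideas) with the property that stationarily often $D\cap V_{\kappa(\bar w)}\subseteq f(\bar w)$. I would try to find, for each $\gamma<lh(\bar U)$, a "candidate initial segment" and then use a tree argument like Claim~\ref{4improved}: form the tree $T=\{j(h)(\bar U\restriction\beta_\alpha)\restriction V_\gamma : \gamma\leq\alpha<\kappa\}$ of approximations and take a branch through it using weak compactness of $\kappa$ in $V$. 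The large cofinality $cf(lh(\bar U))\geq\kappa^{++}$ should guarantee that a single branch $D$ works against a set of $\gamma$'s that remains cofinal in $lh(\bar U)$, hence projects (in $V[G]$) to a stationary set of $\alpha$'s — this is exactly where $\kappa^{++}$ beats the weaker hypotheses: a set of size $\leq\kappa^+$ of "bad" levels cannot be cofinal in $lh(\bar U)$.

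**The main obstacle.** The hard part will be ensuring the set $\{\alpha : D\cap\alpha\subseteq C_\alpha\}$ is genuinely \emph{stationary} in $V[G]$, not merely unbounded. For this I would mimic the density argument in the proof of Proposition~\ref{diamondstat}: given a name $\dot E$ for a club and a condition $r$, use the characterization of stationary sets (Theorem~\ref{stationary}) — or rather, one works with a name for a club and shows that $D\cap\alpha\subseteq C_\alpha$ can be forced for some $\alpha$ in any prescribed club $\dot E$. Concretely, given $r\Vdash$"$\dot E$ is a club", extend $r$, read off an initial segment of the intended $D$ and of $\dot E$ using the closure of the direct-extension order, and then — using that the relevant sets are $\bar U$-positive and $cf(lh(\bar U))\geq\kappa^{++}$ so the diagonalization did not "use up" too many levels — find a single measure sequence $\bar u$ (reflecting some $\bar U\restriction\xi$) that we can append so that the resulting condition forces $\kappa(\bar u)\in\dot E$ and $D\cap\kappa(\bar u)\subseteq f(\bar u)=\dot C_{\kappa(\bar u)}$. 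Balancing the coherence requirements on $D$ (it must be a fixed set in $V$, by Theorem~\ref{approximation}) against the density requirement (we must be able to steer $\kappa(\bar u)$ into an arbitrary club) is the crux, and the extra cofinality hypothesis is what provides the needed slack.
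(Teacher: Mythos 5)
Your write-up is a plan rather than a proof, and the step you yourself flag as ``the crux'' --- producing a single club $D$ that is almost contained in stationarily many $C_\alpha$'s --- is exactly the step that is missing. The tree-and-branch device you borrow from Claim \ref{4improved} does not do this job: that claim is about cohering the measure-one sets $j(b)(\bar U\restriction\beta_\alpha)$ into one set $A$, whereas here there is no coherence problem to solve; the problem is to find \emph{one} tail-measure-one set $A$ at the top such that $A\cap V_{\kappa(\bar w)}$ is almost contained in the (already fixed, mutually unrelated) local data attached to positively many $\bar w$. The paper's mechanism for this is a pressing-down argument on $lh(\bar U)$: after reducing each $C_{\kappa(\bar w)}$ to a set $O(b(\bar w)\cap\MS_G)$ generated by a $\bar w$-tail-measure-one set $b(\bar w)\in V$ (Lemma \ref{reducetoground}), one regards $\bar w\mapsto\tau_{\bar w}$, the level where $b(\bar w)$ becomes measure one, as a regressive function, applies Fodor on a stationary subset of $lh(\bar U)$ to freeze a single $\beta$, and takes $A_\beta\in\bigcap\bar U\restriction\,\geq\beta$ with $A_\beta^c\in\bigcap\bar U\restriction\beta$ (Lemma \ref{pseudo}). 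Nothing in your sketch plays the role of this Fodor step, and your proposed final density argument cannot substitute for it, since without it there is no candidate $D$ to force anything about.

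Relatedly, you misidentify where $cf(lh(\bar U))\geq\kappa^{++}$ enters. It is not that ``a set of size $\leq\kappa^+$ of bad levels cannot be cofinal.'' The reduction of $C_{\kappa(\bar w)}$ to a tail-measure-one set only works for those $\bar w$ with $cf(lh(\bar w))\geq\kappa(\bar w)^+$ (one needs Theorem \ref{tailclub} inside $R_{\bar w}$, plus a diagonal intersection over $\kappa(\bar w)$ many lower parts, which stays tail-measure-one only under that cofinality restriction). One therefore needs $E=\{\bar w: cf(lh(\bar w))\geq\kappa(\bar w)^+\}$ to be $\bar U$-\emph{stationarily} positive, i.e.\ $\{\tau<lh(\bar U): cf(\tau)\geq\kappa^+\}$ must be stationary in $lh(\bar U)$ --- this is precisely what $cf(lh(\bar U))\geq\kappa^{++}$ buys, and it is also what makes the Fodor step legitimate and the resulting set of $\alpha$'s stationary rather than merely unbounded. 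Your appeal to weak compactness of $\kappa$ in $V$ and to an unspecified two-level bookkeeping function $h$ does not engage with either of these points, so as written the argument does not go through.
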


\begin{lemma}\label{pseudo}
Suppose $cf(lh(\bar{U}))\geq \kappa^+$. Let $E\subset \mathcal{MS}$ be $\bar{U}$-stationarily positive, namely $\{\tau<lh(\bar{U}): E\in U(\tau)\}$ is a stationary subset of $lh(\bar{U})$.
Given a tail measure function $b$, there exists $A\in \bigcap \bar{U}\restriction \geq \beta$ for some $\beta<lh(\bar{U})$, such that in any generic $G\subset R_{\bar{U}}$, $$\{\alpha<\kappa: \exists \bar{w}\in \MS_G\cap E [\kappa(\bar{w})=\alpha \wedge O(A\cap \MS_G)\cap \alpha\subset^* O(b(\bar{w})\cap \MS_G)]\}$$ is stationary.
\end{lemma}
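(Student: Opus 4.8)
\emph{Proof plan.} I would argue through the constructing embedding $j\colon V\to M$, $\crit(j)=\kappa$. Put $S^{*}=\{\tau<lh(\bar U):E\in U(\tau)\}$, a stationary subset of $lh(\bar U)$, which by hypothesis has cofinality $\geq\kappa^{+}$. For $\sigma<lh(\bar U)$ set $A_{\sigma}:=j(b)(\bar U\restriction\sigma)$; as in the definition of WRP (via the sets $X_{b,\bar u}=\{\bar v:\bar u\in b(\bar v)\}$) one has $A_{\sigma}=\{\bar u\in\MS\cap V_{\kappa}:X_{b,\bar u}\in U(\sigma)\}$, so $\langle A_{\sigma}:\sigma<lh(\bar U)\rangle\in V$; and since $j(b)$ is a tail measure function in $M$, each $A_{\sigma}$ is $(\bar U\restriction\sigma)$-tail-measure-one, with least tail bound $\rho_{\sigma}<\sigma$. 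The key observation is that $\rho_{\sigma}<\kappa$ for \emph{every} $\sigma$: the ground-model function assigning to $\bar v$ the least tail bound of $b(\bar v)$ takes values $<lh(\bar v)<\kappa(\bar v)$, so by elementarity $\rho_{\sigma}$, being its $j$-value at $\bar U\restriction\sigma$, is $<\kappa(\bar U\restriction\sigma)=\kappa$.

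Next I would choose $A$. As $\kappa<cf(lh(\bar U))$, partitioning the stationary set $S^{*}$ by the value $\rho_{\sigma}\in\kappa$ gives a stationary piece; fix $\rho^{*}<\kappa$ with $S_{1}:=\{\sigma\in S^{*}:\rho_{\sigma}\leq\rho^{*}\}$ stationary in $lh(\bar U)$, and set
\[
A:=\{\bar v\in\MS\cap V_{\kappa}:lh(\bar v)>\rho^{*}\}.
\]
Since $\rho^{*}<\crit(j)$, one computes $A\in U(\eta)\iff lh(\bar U\restriction\eta)=\eta>\rho^{*}$, so $A\in\bigcap\bar U\restriction\geq\beta$ with $\beta:=\rho^{*}+1<lh(\bar U)$, while $A\notin U(\eta)$ for $\eta\leq\rho^{*}$. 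Hence for $\sigma\in S_{1}$ with $\sigma>\rho^{*}$ and any $\eta<\sigma$, either $\eta\leq\rho^{*}$ (so $V_{\kappa}\setminus A\in U(\eta)$) or $\eta\in(\rho^{*},\sigma)\subseteq(\rho_{\sigma},\sigma)$ (so $A_{\sigma}\in U(\eta)$); thus $(V_{\kappa}\setminus A)\cup A_{\sigma}\in\bigcap(\bar U\restriction\sigma)$, and reflecting through $j$ this says precisely that $Y:=\{\bar w\in\MS\cap V_{\kappa}:(V_{\kappa(\bar w)}\setminus A)\cup b(\bar w)\in\bigcap\bar w\}\in U(\sigma)$ for all $\sigma\in S_{1}$.

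To verify the conclusion, fix this $A$ and run a density argument in $R_{\bar U}$. Let $\dot C$ be a name and $p=p_{0}{}^{\frown}(\bar U,B_{0})\Vdash\dot C$ is a club; by Theorem \ref{tailclub} shrink to $p_{0}{}^{\frown}(\bar U,B_{1})\Vdash O(\Gamma\cap G)\subseteq\dot C$ for a $\bar U$-tail-measure-one $\Gamma$ with tail bound $\gamma_{\Gamma}<lh(\bar U)$. Given any $q=q_{0}{}^{\frown}(\bar U,B_{q})\leq p_{0}{}^{\frown}(\bar U,B_{1})$, pick $\sigma\in S_{1}$ with $\sigma>\gamma_{\Gamma}$; then $B_{q}\cap\Gamma\cap E\cap Y\in U(\sigma)$, so by $\kappa$-completeness of $U(\sigma)$ together with the fact that $\MS\cap V_{\kappa}\setminus V_{\delta}\in U(\sigma)$ for each $\delta<\kappa$, choose $\bar w$ in that set with $\kappa(\bar w)$ above the rank of the top component of $q_{0}$. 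Since $\bar w\in Y$, $a_{\bar w}:=\bigl((V_{\kappa(\bar w)}\setminus A)\cup b(\bar w)\bigr)\cap B_{q}\cap V_{\kappa(\bar w)}\in\bigcap\bar w$, so $q':=q_{0}{}^{\frown}(\bar w,a_{\bar w}){}^{\frown}(\bar U,B_{q}\setminus V_{\kappa(\bar w)+1})\leq q$ is a condition. It forces $\bar w\in\MS_{G}\cap E$ and $\bar w\in\Gamma$, hence $\kappa(\bar w)\in O(\Gamma\cap G)\subseteq\dot C$; and since $\MS_{G}\cap V_{\kappa(\bar w)}\subseteq a_{\bar w}$ we get $A\cap\MS_{G}\cap V_{\kappa(\bar w)}\subseteq A\cap a_{\bar w}\subseteq b(\bar w)$, so $O(A\cap\MS_{G})\cap\kappa(\bar w)\subseteq O(b(\bar w)\cap\MS_{G})$. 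Thus $q'$ forces $\dot C$ to meet the set in the statement, and by genericity that set is stationary.

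The one genuinely load-bearing point is the uniform bound $\rho_{\sigma}<\kappa$ (which is what makes the pigeonhole over $\kappa<cf(lh(\bar U))$ possible and is where the cofinality hypothesis enters); the remainder is routine Radin bookkeeping — notably that $\bar U\restriction\sigma$ is a measure sequence of $M$ and that membership in the $U(\eta)$'s is absolute between $V$ and $M$, so that all the sets above genuinely lie in $V$.
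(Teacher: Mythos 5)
There is a genuine gap, and it sits exactly at the point you yourself flag as load-bearing: the claim that $\rho_{\sigma}<\kappa$ for every $\sigma$. Your justification is that the ground-model function sending $\bar v$ to the least tail bound of $b(\bar v)$ takes values $<lh(\bar v)<\kappa(\bar v)$. But $lh(\bar v)<\kappa(\bar v)$ is false in general: a measure sequence $\bar v\in\MS\cap V_{\kappa}$ may have length up to $j_{\bar v}(\kappa(\bar v))$, far above $\kappa(\bar v)$. Indeed, under the lemma's hypothesis $cf(lh(\bar U))\geq\kappa^{+}$ we have $lh(\bar U)>\kappa$, so for $\sigma\geq\kappa$ the sequence $\bar U\restriction\sigma$ itself satisfies $lh(\bar U\restriction\sigma)=\sigma\geq\kappa=\kappa(\bar U\restriction\sigma)$, and hence by elementarity the set $\{\bar v:lh(\bar v)\geq\kappa(\bar v)\}$ lies in $U(\sigma)$ for all such $\sigma$. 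What elementarity actually yields is only $\rho_{\sigma}<lh(\bar U\restriction\sigma)=\sigma$, i.e.\ that $\sigma\mapsto\rho_{\sigma}$ is \emph{regressive} on $S^{*}$, not uniformly bounded below $\kappa$. Consequently the pigeonhole over $\kappa$-many values collapses, and with it your choice of $A=\{\bar v:lh(\bar v)>\rho^{*}\}$ with $\beta=\rho^{*}+1$, which only makes sense when $\rho^{*}<\kappa$.

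The repair is to apply the Pressing Down Lemma to the regressive map $\sigma\mapsto\rho_{\sigma}$ on the stationary set $S^{*}\subseteq lh(\bar U)$ (this is where $cf(lh(\bar U))\geq\kappa^{+}$ really enters), obtaining a stationary $S_{1}\subseteq S^{*}$ and a single $\beta<lh(\bar U)$ with $\rho_{\sigma}<\beta$ for all $\sigma\in S_{1}$; in place of your explicit $A$ one then needs a set $A_{\beta}\in\bigcap\bar U\restriction\geq\beta$ with $A_{\beta}^{c}\in\bigcap\bar U\restriction\beta$, which exists after increasing $\beta$ to a novel ordinal. This is precisely the paper's argument. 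The remainder of your proof --- the verification that $Y$ (the paper's set $F$) belongs to $U(\sigma)$ for $\sigma\in S_{1}$, and the density argument showing the displayed set is stationary --- is sound and matches the paper's reasoning, so only this one step needs to be replaced.
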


\begin{proof}
%Notice that $\{\bar{w}: \bar{w} \text{ does not contain a weak repeat point}\}\in \bigcap \bar{U}$. So we may work inside this set.
Fix a tail measure function $b$. Then, for every $\bar{w}\in \MS \cap V_\kappa$, there is some $\tau=\tau_{\bar{w}}<lh(\bar{w})$ such that $b(\bar{w})\in \bigcap \bar{w}\restriction \geq \tau$. Let $f$ be the function $\bar{w}\mapsto \tau_{\bar{w}}$. Let $S=\{\tau<lh(\bar{U}): E\in U(\tau)\}$. Consider $g: S\to lh(\bar{U})$ such that $g(\gamma)=j(f)(\bar{U}\restriction \gamma)<\gamma$. By Pressing Down, there is a stationary $S'\subset S$ and $\beta<lh(\bar{U})$ such that $g'' S' \subset \beta$. We may assume $\beta$ is novel since $\bar{U}$ does not contain a repeat point. Pick $A_\beta\in \bigcap \bar{U}\upharpoonright\geq \beta$ with $A_\beta^c\in \bigcap \bar{U}\restriction \beta$.

We check that $F=\{\bar{w}\in E: A_\beta\cap b(\bar{w})^c \text{ is }\bar{w}\text{-null}\}$ is $\bar{U}$-positive. Indeed, for any $\tau\in S'$, $\bar{U}\restriction \tau \in j(F)$, since $A_\beta \cap j(b)(\bar{U}\restriction \tau)^c\not\in U(i)$ for any $i<\tau$. To see this, if $i\geq \beta$, $j(b)(\bar{U}\restriction \tau)\in U(i)$ and if $i<\beta$, $A^c_\beta\in U(i)$.

Let $G\subset R_{\bar{U}}$ be generic over $V$, then $O(F\cap \MS_G)$ is a stationary subset of $\kappa$, by Theorem \ref{tailclub}. For each $\bar{w}\in F\cap \MS_G$, we know a tail of $\MS_G$ below $\bar{w}$ avoids $A_\beta \cap b(\bar{w})^c$. As a result, $O(A_\beta\cap \MS_G)\cap \kappa(\bar{w})\subset^* O(b(\bar{w})\cap \MS_G)$.

\end{proof}

\begin{lemma}\label{reducetoground}
Suppose $cf(lh(\bar{U}))\geq \kappa^+$. Let $p\Vdash \dot{c}$ is a name for a $C$-sequence. Then there exists $q\leq^* p$ and a tail measure function $b$ such that for some $B\in \bigcap \bar{U}$, for any $\bar{w}\in B$, if $cf(lh(\bar{w}))\geq \kappa(\bar{w})^+$, it is the case that $q^\frown \bar{w}\Vdash O(b(\bar{w})\cap \MS_G)\subset^* \dot{c}(\bar{w})$.
\end{lemma}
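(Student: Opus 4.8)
The plan is to reduce the name $\dot c$ to a ground model object by combining Lemma~\ref{RepresentationLemma0} with Theorem~\ref{tailclub} applied internally to each sub-forcing $R_{\bar w}$. First I would apply Lemma~\ref{RepresentationLemma0} to $p$ and the sequence of names $\langle \dot c(\alpha)\subseteq\alpha : \alpha<\kappa\rangle$, obtaining a direct extension $q_1\leq^* p$ with $(q_1)_0=p_0$ and a function $f\in V$ such that for every $\bar w\in A^{q_1}$, $f(\bar w)$ is an $R_{\bar w}$-name for a subset of $\kappa(\bar w)$ with $q_1^\frown\bar w\Vdash f(\bar w)=\dot c(\kappa(\bar w))$. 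Since $q_1^\frown\bar w$ extends $p$ and the factor $(R_{\bar U}/(\bar U,A^{q_1}-V_{\kappa(\bar w)+1}),\leq^*)$ is $(2^{|R_{\bar w}|})^+$-closed, its $R_{\bar w}$-reduct $p_0^\frown(\bar w,A^{q_1}\cap V_{\kappa(\bar w)})$ already forces in $R_{\bar w}$ that $f(\bar w)$ is a club subset of $\kappa(\bar w)$, at least whenever $cf(lh(\bar w))\geq\kappa(\bar w)^+$ (so that $\kappa(\bar w)$ is regular in $V^{R_{\bar w}}$). For such $\bar w$ I would apply Theorem~\ref{tailclub} inside $R_{\bar w}$ to this condition and the name $f(\bar w)$, obtaining a $\bar w$-measure-one set $E_{\bar w}\subseteq A^{q_1}\cap V_{\kappa(\bar w)}$ and a $\bar w$-tail-measure-one set $\Gamma_{\bar w}$ with $p_0^\frown(\bar w,E_{\bar w})\Vdash_{R_{\bar w}}O(\Gamma_{\bar w}\cap G)\subseteq f(\bar w)$; for the remaining $\bar w\in A^{q_1}$ I put $E_{\bar w}=A^{q_1}\cap V_{\kappa(\bar w)}$ and $\Gamma_{\bar w}=V_{\kappa(\bar w)}$, and define the tail measure function $b$ by $b(\bar w)=\Gamma_{\bar w}$ (extended by $b(\bar w)=V_{\kappa(\bar w)}$ elsewhere on $\MS\cap V_\kappa$).

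It then remains to produce a single $\bar U$-measure-one set $B\subseteq A^{q_1}$ such that $B\cap V_{\kappa(\bar w)}\subseteq E_{\bar w}$ for every $\bar w\in B$; I would then set $q:=p_0^\frown(\bar U,B)\leq^* p$. With such a $B$, for $\bar w\in B$ with $cf(lh(\bar w))\geq\kappa(\bar w)^+$ the $R_{\bar w}$-reduct of $q^\frown\bar w$ is $p_0^\frown(\bar w,B\cap V_{\kappa(\bar w)})$, which extends $p_0^\frown(\bar w,E_{\bar w})$ and hence forces $O(\Gamma_{\bar w}\cap G)\subseteq f(\bar w)=\dot c(\kappa(\bar w))$; pulling this back to $R_{\bar U}$, and using that $\Gamma_{\bar w}\subseteq V_{\kappa(\bar w)}$ so that the part of $O(\Gamma_{\bar w}\cap\MS_G)$ below $\kappa(\bar w)$ is computed from the $R_{\bar w}$-part of the generic, yields $q^\frown\bar w\Vdash O(b(\bar w)\cap\MS_G)\subseteq^*\dot c(\kappa(\bar w))$, which is the conclusion.

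The main obstacle is obtaining this $B$: because each $E_{\bar w}$ is a subset of $V_{\kappa(\bar w)}$ depending on $\bar w$, the requirement ``$B\cap V_{\kappa(\bar w)}\subseteq E_{\bar w}$ for all $\bar w\in B$'' is a \emph{downward} coherence condition, not the usual (upward) diagonal intersection used in Lemma~\ref{RepresentationLemma0}, and a naive attempt need not produce a $\bar U$-measure-one set. The way around this is to choose the $E_{\bar w}$ coherently: since we may assume $A^{q_1}$ is internally coherent, the measure-one set that the fusion underlying Theorem~\ref{tailclub} produces for $\bar w$ depends on $\bar w$ only through $\bar w$ and $A^{q_1}\cap V_{\kappa(\bar w)}$, so the $E_{\bar w}$ can be arranged so that $E_{\bar w}\cap V_{\kappa(\bar v)}=E_{\bar v}$ for $\bar v\in E_{\bar w}$ on a $\bar U$-measure-one set of $\bar w$. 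Granting this, the set $B=\{\bar u\in A^{q_1}:\bar u\in E_{\bar w}\text{ for some (equivalently all) }\bar w\in A^{q_1}\text{ with }\kappa(\bar w)>\kappa(\bar u)\text{ and }cf(lh(\bar w))\geq\kappa(\bar w)^+\}$ is well-defined, lies in $\bigcap\bar U$, and has the required property. I expect establishing this coherence of the $E_{\bar w}$ (equivalently, running the proof of Theorem~\ref{tailclub} uniformly in $\bar w$) to be the technical heart of the argument.
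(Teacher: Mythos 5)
Your opening moves coincide with the paper's: Lemma \ref{RepresentationLemma0} reduces $\dot{c}(\kappa(\bar{w}))$ to an $R_{\bar{w}}$-name, and Theorem \ref{tailclub} applied inside $R_{\bar{w}}$ produces the tail-measure-one set that becomes $b(\bar{w})$. You have also correctly located the obstacle. The gap is in your proposed resolution of that obstacle, and I do not think it can be repaired in the form you suggest. The set $B$ you need --- a single $B\in\bigcap\bar{U}$ with $B\cap V_{\kappa(\bar{w})}\subseteq E_{\bar{w}}$ for every $\bar{w}\in B$ --- reflects under $j$ as follows: writing $E$ for the measure function $\bar{w}\mapsto E_{\bar{w}}$, elementarity gives $B=j(B)\cap V_\kappa\subseteq j(E)(\bar{U}\restriction\gamma)$ for every $\gamma<lh(\bar{U})$ with $\bar{U}\restriction\gamma\in j(B)$, i.e.\ for every $\gamma$ (or every $\gamma$ with $cf(\gamma)\geq\kappa^+$, if you only constrain those $\bar{w}$). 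By upward closure of filters this forces $j(E)(\bar{U}\restriction\gamma)\in\bigcap\bar{U}$ for all such $\gamma$ simultaneously --- a repeat-point-type property strictly stronger than LRP, which already fails for some measure functions whenever $lh(\bar{U})<2^\kappa$, precisely the regime in which this lemma is applied in Theorem \ref{stationarilytrivial}. Your claimed coherence $E_{\bar{w}}\cap V_{\kappa(\bar{v})}=E_{\bar{v}}$ has no visible mechanism either: $E_{\bar{v}}$ is extracted by Theorem \ref{tailclub} from the name $f(\bar{v})$ for the club $\dot{c}(\kappa(\bar{v}))$, whereas $E_{\bar{w}}\cap V_{\kappa(\bar{v})}$ comes from analyzing the name $f(\bar{w})$ for the entirely unrelated club $\dot{c}(\kappa(\bar{w}))$, so ``running the fusion uniformly'' does not help.

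The paper's proof avoids needing any such $B$. It keeps $q=p_0^\frown(\bar{U},A')$ with the \emph{un-shrunk} measure-one set coming out of Lemma \ref{RepresentationLemma0}, and applies Theorem \ref{tailclub} not once per $\bar{w}$ but once per pair $(\bar{w},t)$ for every possible lower part $t\in V_{\kappa(\bar{w})}\cap R_{<\kappa}$, obtaining $A^t_{\bar{w}}$ and $\Gamma^t_{\bar{w}}$; it then sets $A_{\bar{w}}=\Delta_t A^t_{\bar{w}}$ and $\Gamma_{\bar{w}}=\Delta_t\Gamma^t_{\bar{w}}$ (still tail-measure-one because $cf(lh(\bar{w}))\geq\kappa(\bar{w})^+$), and takes $b(\bar{w})=\Gamma_{\bar{w}}$. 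The point is then to show that the larger condition $p_0^\frown(\bar{w},A'\cap V_{\kappa(\bar{w})})$ \emph{already} forces $O(\Gamma_{\bar{w}}\cap\MS_{G_{\bar{w}}})\subset^*\dot{d}(\bar{w})$, by density: a purported extension forcing $\kappa(\bar{u})\notin\dot{d}(\bar{w})$ for some $\bar{u}\in\Gamma_{\bar{w}}$ above the rank of its lower part $t'$ is, by the diagonal intersections, compatible with $t'^\frown(\bar{u},A^{t'}_{\bar{w}}\cap V_{\kappa(\bar{u})})^\frown(\bar{w},A^{t'}_{\bar{w}})$, which forces $\kappa(\bar{u})\in\dot{d}(\bar{w})$ --- a contradiction. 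This quantification over lower parts plus the compatibility argument (which is also why the conclusion is only $\subset^*$, with the $rank(t)$ offset) is the missing ingredient, not coherence of the $E_{\bar{w}}$'s.
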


\begin{proof}
Suppose $p=p_0^\frown (\bar{U}, A)$. By Lemma \ref{RepresentationLemma0}, we can find $\bar{U}$-measure-one  $A'\subset A$, and some $\dot{d}$ such that $\dot{d}(\bar{w})$ is a $R_{\bar{w}}$-name for a club subset of $\kappa(\bar{w})$ and letting $q=p_0^\frown (\bar{U}, A')$ we have that $q^\frown \bar{w}\Vdash \dot{c}_{\kappa(\bar{w})}=\dot{d}(\bar{w})$ for any $\bar{w}\in A'$.

Let $R_{<\kappa}$ denote the collection of possible lower parts of conditions in $R_{\bar{U}}$. Namely, $t\in R_{<\kappa}$ iff $t\fr (\bar{U}, E)\in R_{\bar{U}}$ for some $E\in \bigcap \bar{U}$.
For each $\bar{w}\in A'$ with $cf(lh(\bar{w}))\geq \kappa(\bar{w})^+$, $t\in V_{\kappa(\bar{w})}\cap R_{<\kappa}$, we can find (by Theorem \ref{tailclub}) $A^{t}_{\bar{w}}\in \bigcap\bar{w}$ and some $\bar{w}$-tail-measure-one set $\Gamma_{\bar{w}}^t $ such that $t^\frown (\bar{w}, A^t_{\bar{w}})\Vdash_{R_{\bar{w}}} O(\Gamma_{\bar{w}}^t\cap \MS_{\dot{G}_{\bar{w}}})\subset \dot{d}(\bar{w})$. Let $A_{\bar{w}}=\Delta_{t} A_{\bar{w}}^t \in \bigcap \bar{w}$ and $\Gamma_{\bar{w}}=\Delta_{t} \Gamma^t_{\bar{w}}$. The fact that $\Gamma_{\bar{w}}$ is a $\bar{w}$-tail-measure-one set follows from the fact that $cf(lh(\bar{w}))\geq \kappa(\bar{w})^+$.

Let $b$ be a tail measure function satisfying that $b(\bar{w})=\Gamma_{\bar{w}}$ for any $\bar{w}\in A'$ with $cf(lh(\bar{w}))\geq \kappa(\bar{w})^+$. We claim that $q^\frown \bar{w}\Vdash O(b(\bar{w})\cap \MS_G)\subset^* \dot{c}(\bar{w})$ for all $\bar{w}\in A'$ with $cf(lh(\bar{w}))\geq \kappa(\bar{w})^+$. Fix such $\bar{w}$.

It is left to check that $p_0^\frown (\bar{w}, A'\cap V_{\kappa(\bar{w})})\Vdash_{R_{\bar{w}}} O(\Gamma_{\bar{w}}\cap \MS_{G_{\bar{w}}})\subset^* \dot{d}(\bar{w})$. This follows immediately from the fact that for any $t\in V_{\kappa(\bar{w})}$ with $t\fr (\bar{w}, A_{\bar{w}})\in R_{\bar{w}}$, $t^\frown (\bar{w}, A_{\bar{w}})\Vdash_{R_{\bar{w}}} O(\Gamma_{\bar{w}}\cap \MS_{\dot{G}_{\bar{w}}})- (rank(t)+1) \subset \dot{d}(\bar{w})$.
To see this fact, suppose for the sake of contradiction that this is not the case for some $t$, then we have an extension $t^*=t'^\frown (\bar{u}, e)^\frown (\overrightarrow{\eta}, \overrightarrow{A}_{\overrightarrow{\eta}})^\frown (\bar{w}, A^*)\Vdash \kappa(\bar{u})\not \in \dot{d}(\bar{w})$ for some $\bar{u}\in \Gamma_{\bar{w}}$. Notice that $\bar{u}\in A_{\bar{w}}^{t'}$ and $\overrightarrow{\eta}\subset A_{\bar{w}}^{t'}$ by the definition of $A_{\bar{w}}$. Hence $t^*$ is compatible with ${t'}^\frown (\bar{w}, A_{\bar{w}}^{t'})$, which forces $O(\Gamma_{\bar{w}}^{t'}\cap \MS_{\dot{G}_{\bar{w}}})\subset \dot{d}(\bar{w})$. As $\bar{u}\in \Gamma_{\bar{w}}^{t'}$, we know that ${t'}^\frown (\bar{u}, A_{\bar{w}}^{t'}\cap V_{\kappa(\bar{u})}) ^\frown (\bar{w}, A_{\bar{w}}^{t'}) \Vdash \kappa(\bar{u})\in \dot{d}(\bar{w})$, and is compatible with $t^*$ that forces $\kappa(\bar{u})\not\in \dot{d}(\bar{w})$, which is absurd.

\end{proof}

\begin{proof}[Proof of Theorem \ref{stationarilytrivial}]
In $V[G]$, where $\langle c_\alpha: \alpha<\kappa\rangle$ is given, we aim to find a club $C\subset \kappa$ such that $\{\alpha: C\cap \alpha\subset^* c_\alpha\}$ is a stationary subset of $\kappa$. By Lemma \ref{reducetoground}, we can find a tail measure function $b\in V$, such that on a tail of $\MS_G$, if $\bar{w}\in \MS_G$ satisfies $cf(lh(\bar{w}))\geq \kappa(\bar{w})^+$, then $O(b(\bar{w})\cap \MS_G)\subset^* c_{\kappa(\bar{w})}$. By Lemma \ref{pseudo} with $E=\{\bar{w}: cf(lh(\bar{w}))\geq \kappa(\bar{w})^+\}$, we can find a club $D$ in $V[G]$ such that for stationarily many regular $\alpha\in C_G$ with $\alpha=\kappa(\bar{w})$ for $\bar{w}\in \MS_G$, it is the case that $D\cap \alpha\subset^* O(b(\bar{w})\cap \MS_G) \subset^* c_{\alpha}$.
\end{proof}

If we prepare the ground model such that $\kappa$ is $(\kappa+2)$-strong and $2^{\kappa}>\kappa^{++}$, then for some elementary embedding, we can derive a measure sequence $\bar{U}$ of length $\kappa^{++}$. By Theorem \ref{diamondfailsWoodin} and Theorem \ref{stationarilytrivial}, we have no amenable C-sequence on $\kappa$ and $\neg \diamondsuit(\kappa)$ in $V^{R_{\bar{U}}}$. 

\begin{corollary}
Relative to the existence of suitable large cardinals, it is consistent that there does not exist an amenable C-sequence at $\kappa$ and $ \diamondsuit(\kappa)$ fails.
\end{corollary}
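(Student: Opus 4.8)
The plan is to deduce the corollary by applying Theorems~\ref{stationarilytrivial} and \ref{diamondfailsWoodin} to a single Radin forcing, over a ground model prepared exactly as in Woodin's construction of $\neg\diamondsuit(\kappa)$ at an inaccessible cardinal (see \cite{CummingsWoodin}, \cite{MR3960897}), only one cardinal higher. First I would start from a model of $\mathrm{GCH}$ containing a $(\kappa+2)$-strong cardinal $\kappa$ and force, by the usual Woodin-style preparation, to a model $V$ in which $\kappa$ is still $(\kappa+2)$-strong --- witnessed by an embedding $j\colon V\to M$ with $\crit(j)=\kappa$ and $V_{\kappa+2}^V\subseteq M$ --- and in which $2^\kappa=\kappa^{+3}$. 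Concretely one forces with $\mathrm{Add}(\kappa,\kappa^{+3})$, which is $<\kappa$-closed and $\kappa^+$-c.c.\ (using $2^{<\kappa}=\kappa$), so it preserves cardinals and cofinalities and keeps $\kappa$ strongly inaccessible, and then lifts the strong embedding by the standard master-condition/surgery argument. I would quote the published accounts of this preparation rather than repeat it.

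Working in $V$, the next step is to derive from $j$ a measure sequence $\bar U$ on $\kappa$ with $lh(\bar U)=\kappa^{++}$, by the usual recursion $A\in U(\tau)\iff\bar U\restriction\tau\in j(A)$: this is legitimate since $\kappa^{++}<2^\kappa<j(\kappa)$ and each proper initial segment $\bar U\restriction\tau$ can be coded by a subset of $V_{\kappa+1}$, hence lies in $V_{\kappa+2}^V\subseteq M$ (this is exactly the standard construction of long measure sequences from strong cardinals, see \cite{CummingsWoodin}). Since $\mathcal P(\kappa)^V\subseteq M$ we have $|2^\kappa|^M\geq|2^\kappa|^V=\kappa^{+3}>\kappa^{++}=lh(\bar U)$, so $|2^\kappa|^M$ does not divide $lh(\bar U)$; therefore, by Remark~\ref{tail2^kappa}, there are unboundedly many $\gamma<lh(\bar U)$ that are not weak repeat points, so weak repeat points are not cofinal in $lh(\bar U)$ and $\bar U$ does not satisfy $\mathrm{RP}$. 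Hence $\bar U$ falls under all the results of the preceding sections.

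Finally I would read off the two conclusions. As $\kappa^{++}$ is regular, $cf(lh(\bar U))=\kappa^{++}\geq\kappa^{++}$, so Theorem~\ref{stationarilytrivial} gives that in $V^{R_{\bar U}}$ there is no amenable $C$-sequence on $\kappa$ (and $\kappa$ remains strongly inaccessible there, since $cf(lh(\bar U))\geq\kappa^+$). At the same time $cf(lh(\bar U))=\kappa^{++}\geq\kappa^+$ and $|2^\kappa|^V=\kappa^{+3}>\kappa^{++}=lh(\bar U)$, so $|2^\kappa|$ does not divide $lh(\bar U)$ --- no ordinal exceeding $\kappa^{++}$ divides $\kappa^{++}$ --- whence Theorem~\ref{diamondfailsWoodin} gives $V^{R_{\bar U}}\models\neg\diamondsuit(\kappa)$. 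Both statements hold in the single model $V^{R_{\bar U}}$, which is exactly the corollary. The only genuinely delicate ingredient is the ground-model preparation, specifically the lifting of the $(\kappa+2)$-strong embedding through $\mathrm{Add}(\kappa,\kappa^{+3})$ together with the derivation of a measure sequence of length $\kappa^{++}$; all of this is standard by now, and everything after it is just checking that the numerical hypotheses $cf(lh(\bar U))\geq\kappa^{++}$ and $|2^\kappa|\nmid lh(\bar U)$ of Theorems~\ref{stationarilytrivial} and \ref{diamondfailsWoodin} are met.
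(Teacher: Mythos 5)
Your proposal is correct and follows exactly the paper's route: prepare the ground model so that $\kappa$ is $(\kappa+2)$-strong with $2^\kappa>\kappa^{++}$, derive a measure sequence $\bar U$ of length $\kappa^{++}$, and apply Theorem~\ref{stationarilytrivial} (since $cf(lh(\bar U))=\kappa^{++}$) together with Theorem~\ref{diamondfailsWoodin} (since $|2^\kappa|>\kappa^{++}$ cannot divide $lh(\bar U)$). The extra detail you supply on the preparation, the coding of initial segments into $V_{\kappa+2}\subseteq M$, and the non-RP check via Remark~\ref{tail2^kappa} is all consistent with what the paper leaves implicit.
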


\section{Questions and Acknowledgments}
The question of whether $\kappa$ being weakly compact implies $\diamondsuit(\kappa)$ is still open and we know the method of Radin forcing cannot provide a solution. However, a better understanding of compactness principles in the Radin model can still shed light on the relationship between weak compactness and its weakenings. 

\begin{question}\label{question}
For strongly inaccessible $\kappa$, can we produce a model where $\kappa$ is not weakly compact and either of the following holds: 
\begin{enumerate}
    \item $\kappa$ is Jonsson,
    \item there is no $\kappa$-Suslin tree,
    \item $\kappa \to [\kappa]^2_\omega$.
\end{enumerate}
\end{question}

As a first step towards the solution, we may ask if we can characterize the measure sequence $\bar{U}$ such that $\kappa$ satisfies any of the compactness principles mentioned in Question \ref{question} in $V^{R_{\bar{U}}}$.\\

${}$\\
\textbf{Acknowledgments:} The authors would like to thank Tom Benhamou, Moti Gitik and Assaf Rinot for valuable questions and comments. The authors also want to thank the anonymous referee for a careful reading and for their very helpful corrections, comments and suggestions.

\bibliographystyle{alpha}
\bibliography{bib}

\begin{thebibliography}{CFM01}

\bibitem[BN19]{MR3960897}
Omer Ben-Neria.
\newblock Diamonds, compactness, and measure sequences.
\newblock {\em J. Math. Log.}, 19(1):1950002, 20, 2019.

\bibitem[BR19]{MR3914943}
Ari~Meir Brodsky and Assaf Rinot.
\newblock Distributive {A}ronszajn trees.
\newblock {\em Fund. Math.}, 245(3):217--291, 2019.

\bibitem[CFM01]{MR1838355}
James Cummings, Matthew Foreman, and Menachem Magidor.
\newblock Squares, scales and stationary reflection.
\newblock {\em J. Math. Log.}, 1(1):35--98, 2001.

\bibitem[Cum]{CummingsWoodin}
James Cummings.
\newblock Woodin's theorem on killing diamond via {R}adin forcing.
\newblock {\em Unpublished note}.

\bibitem[Cum13]{CummingsMagidor}
James Cummings.
\newblock {\em private communication}, 14 July 2013.

\bibitem[CW]{CummingsWoodinbook}
James Cummings and Hugh Woodin.
\newblock {\em Generalized Prikry forcings}.
\newblock Unpublished book.

\bibitem[DH06]{MR2279655}
Mirna D\v{z}amonja and Joel~David Hamkins.
\newblock Diamond (on the regulars) can fail at any strongly unfoldable
  cardinal.
\newblock {\em Ann. Pure Appl. Logic}, 144(1-3):83--95, 2006.

\bibitem[FK05]{MR2151585}
Matthew Foreman and Peter Komjath.
\newblock The club guessing ideal: commentary on a theorem of {G}itik and
  {S}helah.
\newblock {\em J. Math. Log.}, 5(1):99--147, 2005.

\bibitem[Git10]{MR2768695}
Moti Gitik.
\newblock Prikry-type forcings.
\newblock In {\em Handbook of set theory. {V}ols. 1, 2, 3}, pages 1351--1447.
  Springer, Dordrecht, 2010.

\bibitem[Hau92]{MR1164732}
Kai Hauser.
\newblock Indescribable cardinals without diamonds.
\newblock {\em Arch. Math. Logic}, 31(5):373--383, 1992.

\bibitem[IL12]{MR2963019}
Tetsuya Ishiu and Paul~B. Larson.
\newblock Some results about {$(+)$} proved by iterated forcing.
\newblock {\em J. Symbolic Logic}, 77(2):515--531, 2012.

\bibitem[Ish05]{MR2194236}
Tetsuya Ishiu.
\newblock Club guessing sequences and filters.
\newblock {\em J. Symbolic Logic}, 70(4):1037--1071, 2005.

\bibitem[Jen91]{Jensen69}
Ronald Jensen.
\newblock Notes from {O}berwolfach.
\newblock {\em Unpublished manuscript}, 1991.

\bibitem[JK69]{JensenKunen}
Ronald Jensen and Kenneth Kunen.
\newblock Some combinatorial properties of {L} and {V}.
\newblock {\em Unpublished manuscript}, 1969.

\bibitem[Kun11]{MR2905394}
Kenneth Kunen.
\newblock {\em Set theory}, volume~34 of {\em Studies in Logic (London)}.
\newblock College Publications, London, 2011.

\bibitem[Lav78]{MR0472529}
Richard Laver.
\newblock Making the supercompactness of {$\kappa $} indestructible under
  {$\kappa $}-directed closed forcing.
\newblock {\em Israel J. Math.}, 29(4):385--388, 1978.

\bibitem[Mit82]{Mitchell82}
William Mitchell.
\newblock How weak is a closed unbounded ultrafilter?
\newblock In D.~{Van Dalen}, D.~Lascar, and T.J. Smiley, editors, {\em Logic
  Colloquium '80}, volume 108 of {\em Studies in Logic and the Foundations of
  Mathematics}, pages 209--230. Elsevier, 1982.

\bibitem[Rad82]{MR670992}
Lon~Berk Radin.
\newblock Adding closed cofinal sequences to large cardinals.
\newblock {\em Ann. Math. Logic}, 22(3):243--261, 1982.

\bibitem[She94]{MR1318912}
Saharon Shelah.
\newblock {\em Cardinal arithmetic}, volume~29 of {\em Oxford Logic Guides}.
\newblock The Clarendon Press, Oxford University Press, New York, 1994.
\newblock Oxford Science Publications.

\bibitem[Tod07]{MR2355670}
Stevo Todorcevic.
\newblock {\em Walks on ordinals and their characteristics}, volume 263 of {\em
  Progress in Mathematics}.
\newblock Birkh\"auser Verlag, Basel, 2007.

\bibitem[Zem00]{Zeman00}
Martin Zeman.
\newblock {$\diamondsuit$} at {M}ahlo cardinals.
\newblock {\em J. Symbolic Logic}, 65(4):1813--1822, 2000.

\end{thebibliography}

\end{document}